\documentclass[reqno]{amsart}
\usepackage[bookmarks=false]{hyperref} 
\usepackage{amssymb}
\usepackage{amsmath}
\usepackage{amsthm} 
\usepackage{mathtools}
\usepackage{color} 
\usepackage[svgnames]{xcolor}
\usepackage[utf8]{inputenc}
\usepackage[T1]{fontenc}
\usepackage{graphicx}
\usepackage{placeins}

\hypersetup{
  colorlinks=true,
  linkcolor=Blue  ,          
  citecolor=Red,        
  filecolor=Magenta,      
  urlcolor=Green,           
pdfpagemode=UseOutlines,
pdftitle={},
pdfauthor={Stephen Gustafson <gustaf@math.ubc.ca>, Stefan Le Coz
  <slecoz@math.univ-toulouse.fr>, Tai-Peng Tsai <ttsai@math.ubc.ca>},
pdfsubject={stability of periodic waves of 1d cubic nonlinear schrödinger equations},
pdfkeywords={stability, periodic waves, nonlinear schrödinger equations} 
}

\numberwithin{equation}{section}
\numberwithin{figure}{section}

\newtheorem{theorem}{Theorem}[section]
\newtheorem{proposition}[theorem]{Proposition}
\newtheorem{lemma}[theorem]{Lemma}
\newtheorem{observation}[theorem]{Observation}
\newtheorem{conjecture}[theorem]{Conjecture}
\newtheorem{corollary}[theorem]{Corollary}
\newtheorem*{acknowledgments}{Acknowledgments}

\theoremstyle{definition}

\theoremstyle{remark}
\newtheorem{remark}[theorem]{Remark}

\DeclarePairedDelimiter{\norm}{\lVert}{\rVert}
\DeclarePairedDelimiter{\abs}{\lvert}{\rvert}
\DeclarePairedDelimiter{\bket}{\lbrace}{\rbrace}

\newcommand{\scalar}[2]{\left( #1,#2 \right)}
\newcommand{\ps}[2]{\left( #1,#2 \right)}

\newcommand{\dual}[2]{\left\langle #1,#2 \right\rangle}

\newcommand{\eps}{\varepsilon}
\newcommand{\la}{\lambda}
\newcommand{\vp}{\varphi}

\newcommand{\N}{\mathbb{N}}
\newcommand{\R}{\mathbb{R}}
\newcommand{\C}{\mathbb C}


\DeclareMathOperator{\sech}{sech}
\DeclareMathOperator{\sn}{sn}
\DeclareMathOperator{\cn}{cn}
\DeclareMathOperator{\dn}{dn}
\DeclareMathOperator{\pq}{pq}

\renewcommand{\leq}{\leqslant}
\renewcommand{\geq}{\geqslant}

\DeclareMathAlphabet{\mathpzc}{OT1}{pzc}{m}{it}
\renewcommand{\Re}{\mathcal R\!\mathpzc{e}}
\renewcommand{\Im}{\mathcal I\!\mathpzc{m}}

  


\begin{document}

\title[Stability of periodic waves of 1D cubic NLS]{Stability of periodic waves of\\ 1D cubic nonlinear Schr\"odinger equations}

\author[S.~Gustafson]{Stephen Gustafson}
\thanks{The work of S. G. is partially supported by NSERC grant 251124-12}

\author[S.~Le Coz]{Stefan Le Coz}
\thanks{The work of S. L. C. is 
  partially supported by ANR-11-LABX-0040-CIMI within the
  program ANR-11-IDEX-0002-02 and  ANR-14-CE25-0009-01}

\author[T.-P.~Tsai]{Tai-Peng Tsai}
\thanks{The work of T. T. is partially supported by NSERC grant 261356-13}

\address[Stephen Gustafson and Tai-Peng Tsai]{
Department of Mathematics,
\newline\indent
University of British Columbia,
\newline\indent
Vancouver BC
\newline\indent
Canada V6T 1Z2}
\email[Stephen Gustafson]{gustaf@math.ubc.ca}
\email[Tai-Peng Tsai]{ttsai@math.ubc.ca}

\address[Stefan Le Coz]{Institut de Math\'ematiques de Toulouse,
  \newline\indent
  Universit\'e Paul Sabatier
  \newline\indent
  118 route de Narbonne, 31062 Toulouse Cedex 9
  \newline\indent
  France}

\email[Stefan Le Coz]{slecoz@math.univ-toulouse.fr}

\subjclass[2010]{35Q55; 35B10; 35B35}

\date{\today}
\keywords{Nonlinear Schr\"odinger equations, periodic waves, stability}

\begin{abstract}
We study the stability of the cnoidal, dnoidal and snoidal elliptic 
functions as spatially-periodic standing wave solutions of the 1D cubic nonlinear 
Schr\"odinger equations. First, we give global variational characterizations of 
each of these periodic waves, which in particular provide alternate proofs of
their orbital stability with respect to same-period
perturbations, restricted to certain subspaces. 
Second, we prove the spectral stability of the cnoidal waves (in
  a certain parameter range) and snoidal waves against 
same-period perturbations, thus providing 
an alternate proof of this (known) fact, which does
not rely on complete integrability.
Third, we give a rigorous version of a formal asymptotic calculation
of Rowlands 
to establish the instability of a class of 
real-valued periodic waves in 1D, which includes the cnoidal waves of
the 1D cubic focusing nonlinear Schr\"odinger equation, 
against perturbations with period a large multiple of their fundamental period.  
Finally, we develop a numerical method to compute the minimizers of the energy with fixed mass and momentum constraints. Numerical experiments support and complete our analytical results.
\end{abstract}

\maketitle
\tableofcontents


\section{Introduction}

We consider the cubic nonlinear Schr\"odinger equation
\begin{equation} \label{eq:nls}
  i \psi_t + \psi_{xx} + b |\psi|^2 \psi = 0, \qquad \psi(0,x) = \psi_0(x)
\end{equation}
in one space dimension, where $\psi : \R\times\R\to\mathbb C$ and 
$b\in\R\setminus\bket{0}$. Equation~\eqref{eq:nls} has well-known
applications in optics, quantum mechanics, and water waves, and serves
as a model for nonlinear dispersive wave phenomena more
generally \cite{Fi15,SuSu99}. It is said to be
\emph{focusing} if $b>0$ and \emph{defocusing} if $b<0$. Note that~\eqref{eq:nls} is invariant under
\begin{itemize}
\item spatial translation: $\psi(t,x) \mapsto \psi(t,x+a)$ for $a \in \R$
\item phase multiplication: $\psi(t,x) \mapsto e^{i \alpha} \psi(t,x)$ for
$\alpha \in \R$.
\end{itemize}

We are particularly interested in the spatially periodic setting
\[
  \psi(t,\cdot) \in H^1_{\mathrm{loc}}\cap P_T, \qquad
  P_T = \{ f \in L^2_{\mathrm{loc}}(\R): \, f(x+T) = f(x) \; \forall x\in\R \}.
\]
The Cauchy problem~\eqref{eq:nls} is globally well-posed in $H^1_{\mathrm{loc}}
\cap P_T$ \cite{Ca03}.  
We refer to \cite{Bo99} for a detailled analysis of nonlinear Schr\"odinger
equations with periodic boundary conditions. Solutions to~\eqref{eq:nls} conserve mass $\mathcal M$, energy $\mathcal E$,
and momentum $\mathcal P$:
\begin{gather*} 
  \mathcal M(\psi)=\frac12\int_0^T|\psi|^2dx,\quad
  \mathcal P(\psi)=\frac12\Im\int_0^T\psi\bar \psi_xdx,\\
  \mathcal E(\psi)=\frac12\int_0^T|\psi_x|^2dx-\frac
  b4\int_0^T|\psi|^4dx.
\end{gather*}
By virtue of its complete integrability,~\eqref{eq:nls} enjoys infinitely many
higher (in terms of the number of derivatives involved) conservation laws \cite{MaAb81}, but we do not use them here, in order to remain in the energy space
$H^1_{\mathrm{loc}}$, and with the aim of avoiding techniques which rely on integrability.

The simplest non-trivial solutions of~\eqref{eq:nls} are the 
\emph{standing waves},
which have the form
\[
  \psi(t,x) = e^{-i a t} u(x), \qquad  a \in \R
\]   
and so the profile function $u(x)$ must satisfy the ordinary differential equation
\begin{equation} \label{eq:ode}
  u_{xx} + b |u|^2 u + a u = 0.
\end{equation}
We are interested here in those standing waves $e^{-i a t} u(x)$ whose profiles
$u(x)$ are spatially periodic -- which we refer to as \emph{periodic waves}.
One can refer to the book~\cite{Pa09} for an overview of the
role and properties of periodic waves in nonlinear dispersive PDEs.

Non-constant, real-valued, periodic solutions of~\eqref{eq:ode} are well-known
to be given by the Jacobi elliptic functions: dnoidal ($\dn$), cnoidal ($\cn$) 
(for $b > 0$) and snoidal ($\sn$) (for $b < 0$) -- see 
Section~\ref{sec:preliminaries} for details. 
To make the link with Schr\"odinger equations set on the whole
  real line, one can see a periodic wave as a special case of infinite
train solitons \cite{LeLiTs15,LeTs14}. Another context in which
  periodic waves appear is when considering the nonlinear
  Schr\"odinger equation on a Dumbbell graph \cite{MaPe16}.
Our interest here is in the stability of 
these periodic waves against periodic perturbations whose period is a multiple of 
that of the periodic wave.

Some recent progress has been made on this stability question.
By Grillakis-Shatah-Strauss \cite{GrShSt87,GrShSt90} type methods,
orbital stability against energy ($H^1_{\mathrm{loc}}$)-norm perturbations of the same
period is known for dnoidal waves \cite{Pa07}, and for 
snoidal waves \cite{GaHa07-2} under the additional constraint that perturbations 
are anti-symmetric with respect to the half-period. In \cite{GaHa07-2},
cnoidal waves are shown to be orbitally stable with respect to
half-anti-periodic perturbations, provided some condition is
satisfied. This condition is verified analytically for small
amplitude cnoidal waves and numerically for larger amplitude. Remark
here that the results in \cite{GaHa07-2} are obtained in a broader
setting, as they are also considering \emph{non-trivially complex-valued} periodic
waves.
Integrable systems methods introduced in \cite{BoDeNi11} and
 developed in \cite{GalPel15} -- in particular conservation of a higher-order functional -- are used to obtain the 
orbital stability of the snoidal waves against $H^2_{\mathrm{loc}}$ perturbations of
period \emph{any} multiple of that of $\sn$.

Our goal in this paper is to further investigate the
  properties of periodic waves. We follow three lines of
  exploration. First, we give \emph{global} variational
  characterization of the waves in the class of periodic or
  half-anti-periodic functions. As a corollary, we obtain orbital
  stability results for periodic waves.
Second, we prove the spectral
  stability of cnoidal, dnoidal and snoidal waves within the class of
  functions whose period is the fundamental period of the
  wave. Third, we prove that cnoidal waves are linearly unstable if
  perturbations are periodic for a sufficiently large multiple of
  the fundamental period of the cnoidal wave.

Our first main results concern global variational characterizations
of the elliptic function periodic waves as constrained-mass energy minimizers
among (certain subspaces of) periodic functions, stated as a series of Propositions
in Section~\ref{sec:variational}. In particular, the following characterization 
of the cnoidal functions seems new. Roughly stated (see 
Proposition~\ref{prop:focusing-A} for a precise statement):

\begin{theorem}\label{thm:1}
Let $b > 0$. The unique (up to spatial translation and phase
multiplication) global minimizer of the energy, with fixed mass, among 
half-anti-periodic functions is a (appropriately
rescaled) cnoidal function.
\end{theorem}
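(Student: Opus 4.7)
The plan is to apply the direct method of the calculus of variations to produce a minimizer, reduce it to a real profile via a phase argument, and finally classify the real half-anti-periodic critical points of \eqref{eq:ode} and eliminate all but the fundamental cnoidal by a scaling comparison. This last step will be the main obstacle.

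\emph{Step 1 (coercivity and existence).} Any $u\in P_T$ satisfying $u(x+T/2)=-u(x)$ must vanish in each half-period, so the fundamental theorem of calculus gives
\[
  \|u\|_{L^\infty}^2\leq 2\|u\|_{L^2}\|u_x\|_{L^2},\qquad \int_0^T|u|^4\,dx \leq 2\|u\|_{L^2}^3\|u_x\|_{L^2}.
\]
Combined with the mass constraint $\mathcal M(u)=\mu$ and Young's inequality, this yields $\mathcal E(u)\geq \tfrac14\|u_x\|_{L^2}^2-C_b\mu^3$, so the infimum is finite and minimizing sequences are bounded in $H^1_{\mathrm{loc}}\cap P_T$. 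The compact embedding of $H^1$ into $L^4$ on the circle of length $T$, weak lower semicontinuity of the Dirichlet term, and closedness of the half-anti-periodic constraint under $L^2$ limits then produce a minimizer $u_\ast$.

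\emph{Step 2 (reduction to a real profile).} The Euler--Lagrange equation is $u_\ast''+b|u_\ast|^2 u_\ast+au_\ast=0$ for some Lagrange multiplier $a\in\R$, and the conserved quantity $W:=\Im(u_\ast\overline{u_\ast'})$ vanishes identically because half-anti-periodicity forces $u_\ast$ to have zeros. Writing $u_\ast=Re^{i\theta}$ with $R=|u_\ast|$, the identity $W=-R^2\theta'$ forces $\theta$ to be locally constant on each connected component of $\{R>0\}$; continuity of $u_\ast'$ across the simple zeros of $R$ then forces $\theta$ to jump by $\pi$ between neighbouring components. Hence $u_\ast=e^{i\theta_0}v$ for a single global constant $\theta_0\in\R$ and a real-valued half-anti-periodic profile $v$.

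\emph{Step 3 (classification and elimination).} A phase-plane analysis of $v''+bv^3+av=0$ shows that, for $b>0$, the nontrivial bounded real periodic solutions consist of the strictly positive dnoidal family and the sign-changing cnoidal family. Half-anti-periodicity rules out the dnoidals, so $v(x)=\gamma\,\cn(\delta(x-x_0),k)$ for some parameters; compatibility with $T$-periodicity and anti-periodicity at $T/2$ forces the fundamental period of $v$ to be $T/(2m+1)$ for some integer $m\geq 0$, leaving a countable family of critical points. The scaling symmetry $u(x)\mapsto\lambda u(\lambda x)$ of \eqref{eq:ode}, under which $(\mathcal M,\mathcal E)$ becomes $(\lambda\mathcal M,\lambda^3\mathcal E)$, reduces the remaining comparison to the inequality $(2m+1)^4\,g(\mu/(2m+1)^2)>g(\mu)$ for $m\geq 1$, where $g(\mu)$ denotes the energy of the fundamental cnoidal of mass $\mu$. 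One establishes this by expressing $g$ and the corresponding mass through the complete elliptic integrals $K(k)$ and $E(k)$ and exploiting the monotonicity of the resulting relation: this is the analytically delicate ingredient, whereas Steps~1 and~2 are essentially routine once zeros have been extracted from the half-anti-periodic constraint.
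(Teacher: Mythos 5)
Your Step 2 contains the critical gap, and it is exactly the point the paper flags as the main difficulty. You assert that ``half-anti-periodicity forces $u_\ast$ to have zeros,'' and you use such a zero to conclude that the constant momentum density $W=\Im(u_\ast\overline{u_\ast'})$ vanishes identically. This is true for \emph{real-valued} continuous functions in $A_{T/2}$ (by the intermediate value theorem), but it is false for complex-valued ones: the plane wave $e^{2i\pi x/T}$ satisfies $u(x+T/2)=-u(x)$ and is nowhere zero, and more generally there is a two-parameter family of bounded, genuinely complex, zero-free solutions of \eqref{eq:ode}. Since a priori the minimizer is complex-valued, you cannot extract a zero, hence cannot conclude $W\equiv 0$, hence cannot reduce to a real profile. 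That this step cannot be repaired by soft arguments is shown by the defocusing case: your Step 2 makes no use of the sign of $b$, yet for $b<0$ the minimizer in $A_{T/2}$ (without momentum constraint) is precisely the zero-free plane wave $\sqrt{2m/T}\,e^{2i\pi x/T}$ (Proposition~\ref{prop:defocusing-A}), so any argument forcing the minimizer to be real up to phase must exploit $b>0$. The paper's substitute is Lemma~\ref{lem:stephen_trick}: a Fourier-coefficient rearrangement $v_j\mapsto\sqrt{(|v_j|^2+|v_{-j}|^2)/2}$ that preserves the $L^2$ and $\dot H^1$ norms, does not decrease the $L^4$ norm, and produces a \emph{real} function in $A_{T/2}$; this lets one take the minimizing sequence real from the start, after which the vanishing, the classification via Lemma~\ref{th:2-1}(b), and the identification with $\cn$ all go through.

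Your Steps 1 and 3 are reasonable in outline but Step 3 diverges from the paper and is not complete as written. To rule out cnoidals of fundamental period $T/(2m+1)$, $m\ge 1$, you reduce to the inequality $(2m+1)^4 g(\mu/(2m+1)^2)>g(\mu)$ and then assert it follows from ``monotonicity of the resulting relation'' in the elliptic integrals; that is the whole content of the elimination step and it is not carried out. The paper avoids this computation entirely with a Morse-index argument: a constrained minimizer has at most one negative direction for the second variation $-\partial_{xx}-a-3bu_\infty^2$ restricted to $A_{T/2}$, while $\partial_x u_\infty$ lies in its kernel with $2n$ zeros per period, so Sturm--Liouville theory yields at least $n-1$ negative eigenvalues with eigenfunctions in $A_{T/2}$, forcing $n=1$. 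If you want to keep your energy-comparison route you must actually prove the elliptic-integral inequality; otherwise the Sturm--Liouville argument is both shorter and already available from the minimization property you established in Step 1.
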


Due to the periodic setting, existence of a minimizer for the
  problems that we are considering is easily obtained. The difficulty
  lies within the identification of this minimizer: is it  a plane
  wave, a (rescaled) Jacobi elliptic function, or something else? To answer
  this question, we first need to be able to decide whether the minimizer
  can be considered real-valued after a phase change. This is
 far from obvious in the half-anti-periodic setting of 
 Theorem~\ref{thm:1}, where we use
a Fourier coefficients rearrangement argument (Lemma
\ref{lem:stephen_trick}) to obtain this information. To identify the
minimizers, we use a combination of spectral and
Sturm-Liouville arguments. 

As a corollary of our global variational characterizations, we obtain
orbital stability results for the periodic waves. In particular,
Theorem~\ref{thm:1} implies the
orbital stability of all cnoidal waves in the space of
half-anti-periodic functions. Such orbital stability results for
periodic waves were already obtained in \cite{Pa07,GaHa07-2} as
consequences of  \emph{local} constrained minimization 
properties. Our global variational characterizations provide alternate proofs of 
these results -- see Corollary~\ref{cor:orbital} and Corollary~\ref{cor:sn-orbital}. 
The orbital stability of
  cnoidal waves was proved only
  for small amplitude in \cite{GaHa07-2}, and so we extend this result to all
  amplitude. Remark however once more that we are in this paper considering
  only real-valued periodic wave profiles, as opposed to
  \cite{GaHa07-2} in which truly complex valued periodic waves were
  investigated.

Our second main result proves the linear (more precisely, \emph{spectral})
stability of the snoidal and cnoidal (with some restriction on the parameter 
range in the latter case) waves against same-period perturbations, but \emph{without} 
the restriction of half-period antisymmetry:
\begin{theorem}\label{thm:2}
Snoidal waves and cnoidal waves (for a range of parameters) with fundamental period $T$
are spectrally stable against $T$-periodic perturbations.
\end{theorem}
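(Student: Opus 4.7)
The plan is to linearize \eqref{eq:nls} about the standing wave $e^{-iat}u(x)$, compute the Morse indices of the resulting Hessian operators---which turn out to be explicit Lam\'e operators---and then invoke a Hamiltonian--Krein instability index formula to count unstable eigenvalues. Writing $\psi(t,x) = e^{-iat}(u(x)+v(t,x)+iw(t,x))$ with real $T$-periodic perturbations $v,w$ of the real profile $u$, the linearized equation reads
\[
\partial_t\begin{pmatrix} v \\ w \end{pmatrix} = \mathcal J \mathcal L\begin{pmatrix} v \\ w \end{pmatrix}, \qquad \mathcal J = \begin{pmatrix} 0 & 1 \\ -1 & 0 \end{pmatrix}, \qquad \mathcal L = \begin{pmatrix} L_+ & 0 \\ 0 & L_- \end{pmatrix},
\]
with $L_+ = -\partial_x^2 - 3bu^2 - a$ and $L_- = -\partial_x^2 - bu^2 - a$ self-adjoint on $L^2_{\mathrm{per}}(0,T)$. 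Spectral stability is the statement that $\sigma(\mathcal J\mathcal L) \subset i\R$, and since $\mathcal J\mathcal L$ is Hamiltonian, it suffices to preclude eigenvalues in the open right half-plane.

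Since $u$ is a $\sn$ or $\cn$ profile, both $L_\pm$ are Lam\'e operators of genus one whose low-lying band-edge eigenfunctions are known in closed form as Lam\'e polynomials. Translation and phase invariance provide at once $L_+ u' = 0$ and $L_- u = 0$. Because $u^2$ has period $T/2$, the operators $L_\pm$ commute with translation by $T/2$ and thereby decompose on the subspaces of $T/2$-periodic and $T/2$-antiperiodic functions. On the antiperiodic sector, the global variational characterizations established earlier in Section~\ref{sec:variational} (in particular, Theorem~\ref{thm:1} for the cnoidal case) directly yield the constrained Morse indices and kernels of $\mathcal L$. On the periodic sector, a direct Sturm oscillation count on a half-period, combined with the explicit Lam\'e ground-state eigenfunction, supplies the remaining information. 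I expect to obtain $n(L_+) = 1$ and $n(L_-) = 0$ for the $\sn$ wave, with an analogous but slightly larger count for $\cn$ coming from the $T/2$-periodic sector.

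With this spectral data in hand, the final step is to apply a Hamiltonian--Krein instability-index identity of Kapitula--Kevrekidis--Sandstede / Pego--Weinstein type: the number of eigenvalues of $\mathcal J\mathcal L$ with positive real part is bounded above by $n(\mathcal L) - n(D)$, where $D$ is a small matrix assembled from derivatives of the conserved quantities $(\mathcal M, \mathcal P)$ with respect to the natural wave parameters, evaluated along the family of $\sn$ or $\cn$ profiles. All entries of $D$ reduce to explicit expressions in the complete elliptic integrals $K(k)$ and $E(k)$. For the snoidal wave the signs are favourable over the entire parameter range, the bound vanishes, and spectral stability follows. For the cnoidal wave the corresponding cancellation holds only when a specific inequality between $K$ and $E$ is satisfied; isolating and rigorously verifying this inequality---which should cut out exactly the parameter range appearing in the statement---is the main technical obstacle of the proof.
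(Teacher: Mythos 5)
Your framework is the right one---the paper's Lemma~\ref{lem:abstract-coercivity} is explicitly a simplified version of the Hamiltonian--Krein index theory you invoke---but as written the argument cannot close, for a concrete counting reason. On $P_{4K}$ one has $n(L_-^{\sn})=2$ (eigenfunctions $\dn$ and $\cn$ with eigenvalues $-1$ and $-k^2$; since $L_-u=0$ with $u=\sn$ sign-changing, Sturm--Liouville forces $n(L_-)\geq 1$, so your expected $n(L_-)=0$ is impossible) and $n(L_+^{\sn})=1$, hence $n(\mathcal L)=3$, while $D$ is at most $2\times 2$ (one phase and one translation generator), so $n(\mathcal L)-n(D)\geq 1$: the bound never vanishes, for $\sn$ or for $\cn$. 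The situation is clearest on the invariant subspace $P_{2K}^+$ (half-periodic, even), where $\mathcal L^{\sn}$ has trivial kernel and exactly two negative directions ($\chi_-$ for $L_+$, $\dn$ for $L_-$; see Figure~\ref{fig:spectra}), so there are no symmetry generators, no $D$ matrix at all, and the index identity reads $k_r+k_c+k_i^-=2$. The index count alone is therefore perfectly consistent with an unstable pair.

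The missing ingredient is the identification of purely imaginary eigenvalues of negative Krein signature that absorb this count. The paper produces them explicitly: $J\mathcal L^{\sn}$ has the eigenvector $\left(\dn^2+k^2\cn^2,\,i(1-k^2)\right)^T$ with eigenvalue $i(1-k^2)$ and $\ps{\xi}{\mathcal L\xi}<0$ for all $k\in(0,1)$, while $J\mathcal L^{\cn}$ has the eigenvector $\left(-\dn^2+k^2\sn^2,\,i\right)^T$ with eigenvalue $i$ and energy $4K(k)-8E(k)$, which is negative precisely when $K<2E$, i.e.\ $k<k_c$. This is where the parameter restriction for $\cn$ actually comes from---the Krein signature of an explicit internal mode---not from the sign of an entry of $D$. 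Once such a mode is exhibited, a Cauchy--Schwarz argument (Lemma~\ref{lem:abstract-coercivity}) shows $L_\pm$ are positive on the symplectic orthogonal complement of the mode, and the symplectic orthogonality of eigenfunctions (Lemma~\ref{lemma:basics}) then excludes off-axis spectrum. Without producing and sign-checking these internal modes, your proof has a genuine gap; with them, it essentially becomes the paper's proof.
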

See Theorem~\ref{thm:linear-stability} for a more precise statement.
For $\sn$, this is already a
consequence of~\cite{BoDeNi11,GalPel15}, whereas for $\cn$ the result was
obtained in \cite{IvLa08}. The works \cite{BoDeNi11,GalPel15} and \cite{IvLa08}
both exploit the integrable
structure,  so
our result could be considered an alternate proof which does not uses
integrability, but instead relies mainly on an invariant subspace decomposition
and an elementary Krein-signature-type argument. See also the recent
work \cite{GePe16} for related arguments.

The proof of Theorem \ref{thm:2} goes as follows.
The linearized operator around a periodic wave can be written as
$J\mathcal L$, where $J$ is a skew symmetric matrix and
$\mathcal L$ is the self-adjoint linearization of the action of the wave (see
Section~\ref{sec:stability} for details). The operator $\mathcal L$ is
made of two Lam\'e operators and we are able to calculate the bottom
of the spectrum for these operators. 
To obtain Theorem~\ref{thm:2}, we decompose the space of periodic
functions into invariant subspaces: half-periodic and
half-anti-periodic, even and odd. Then we analyse the linearized
spectrum in each of these subspaces. 
In the subspace of half-anti-periodic functions, we obtain spectral stability as a consequence of the analysis of the spectrum of $\mathcal L$
(alternately, as a consequence of the variational characterizations
of Section~\ref{sec:variational}). For the subspace of
half-periodic functions, a more involved argument is required. We give
in Lemma~\ref{lem:abstract-coercivity} an abstract argument relating coercivity
of the linearized action $\mathcal L$ with the number of eigenvalues
with negative Krein signature of $J\mathcal L$ (this is in fact a
simplified version of a more general argument \cite{HaKa08}).  Since
we are  able to find an eigenvalue with negative
Krein signature for $J\mathcal L$, spectral stability for
half-periodic functions follows from this abstract argument.

Our third main result makes rigorous a formal asymptotic calculation
of Rowlands~\cite{Row74} which establishes:
\begin{theorem}
Cnoidal waves are unstable against perturbations whose period is a 
sufficiently large multiple of its own.
\end{theorem}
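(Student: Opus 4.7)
My plan is a Bloch--Floquet decomposition combined with analytic perturbation theory around the zero eigenvalue of the linearization. Linearizing NLS around the cnoidal standing wave $e^{-iat}\phi(x)$, where $\phi$ is real-valued and of fundamental period $T$, and writing the complex perturbation as $w = u + iv$, one obtains
\[
\partial_t \begin{pmatrix} u \\ v \end{pmatrix} = J\mathcal L \begin{pmatrix} u \\ v \end{pmatrix},\qquad J = \begin{pmatrix} 0 & 1\\ -1 & 0\end{pmatrix},\qquad \mathcal L = \begin{pmatrix} L_+ & 0 \\ 0 & L_-\end{pmatrix},
\]
with $L_+ = -\partial_x^2 - a - 3b\phi^2$ and $L_- = -\partial_x^2 - a - b\phi^2$. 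A period-$NT$ perturbation decomposes as a finite sum of Bloch modes $\eta(x) = e^{i\mu x} p(x)$ with $p$ of period $T$ and $\mu = 2\pi k/(NT)$, $k=0,\dots,N-1$; the spectral problem on each mode becomes $J\mathcal L_\mu p = \lambda p$, where $J\mathcal L_\mu$ is obtained by replacing $\partial_x$ with $\partial_x + i\mu$ in $L_\pm$ and acts on $T$-periodic vectors. This is an analytic family in $\mu$. To produce instability for some large $N$ it suffices to exhibit, for all sufficiently small $\mu \neq 0$, an eigenvalue of $J\mathcal L_\mu$ with positive real part.

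At $\mu = 0$, the operator $J\mathcal L_0 = J\mathcal L$ has $0$ as an eigenvalue whose geometric null space is $\mathrm{span}\{(\phi', 0),\,(0, \phi)\}$ (from translation and phase invariance), while further generalized null vectors arise from differentiating the cnoidal family along its continuous parameters; for instance $(\partial_a\phi, 0)$, using the identity $L_+\partial_a\phi = \phi$ obtained by differentiating the profile ODE in $a$. Let $\mathcal K$ denote the full generalized null space. By Kato's theory of analytic perturbation of spectral projections, there is an analytic family of projections $\Pi(\mu)$ onto an invariant subspace of dimension $\dim\mathcal K$, and the bifurcation of eigenvalues from $0$ is encoded in the finite-dimensional analytic matrix $A(\mu) = \Pi(\mu) J\mathcal L_\mu \Pi(\mu)$, nilpotent at $\mu = 0$. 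Choosing a basis of $\mathcal K$ built from the symmetry modes and the parameter-derivative modes, one can then compute the first coefficients of $A(\mu) = A_0 + \mu A_1 + \mu^2 A_2 + \cdots$ and extract the leading behavior of the eigenvalues $\lambda(\mu)$ splitting off from $0$.

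The Hamiltonian structure of $J\mathcal L_\mu$, together with the complex-conjugation symmetry that sends the $\mu$-fiber to the $-\mu$-fiber, forces the characteristic polynomial of $A(\mu)$ to depend only on $\mu^2$; the nonzero bifurcating eigenvalues therefore appear as pairs $\pm\lambda(\mu)$ with $\lambda(\mu)^2 = c\mu^2 + O(\mu^4)$. A standard symplectic manipulation then expresses the constant $c$ as a Jacobian determinant involving the mass and a momentum-type functional with respect to the parameters of the cnoidal family -- precisely the object that appears in Rowlands' formal computation. The last step is to verify that this determinant has the sign making $c > 0$, so that $\lambda = \pm\sqrt{c}\,\mu + O(\mu^2)$ is real and nonzero for all small $\mu \neq 0$; instability against $NT$-periodic perturbations then follows as soon as $N$ is large enough. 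The main obstacle lies in the reduction to $A(\mu)$: because $A_0$ is a nontrivial Jordan block, standard Rayleigh--Schr\"odinger perturbation theory does not apply directly, and one must either rescale the basis of $\mathcal K$ in a $\mu$-dependent way or pass through a Puiseux expansion to resolve the degeneracy and correctly extract $c$. The final sign check is then a delicate but elementary computation using the explicit expressions of $\phi$ and its parameter derivatives in terms of the complete elliptic integrals $K(k)$ and $E(k)$.
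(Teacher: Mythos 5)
Your overall architecture (Bloch--Floquet reduction to a one-parameter family $J\mathcal L_\mu$ on $T$-periodic functions, followed by perturbation of the four-dimensional generalized kernel at the fiber where the kernel is nontrivial) is the same as the paper's, up to the harmless choice of normalizing by the period of $\phi$ rather than of $\phi^2$. The genuine gap is in the mechanism you propose for the final step. Since the generalized kernel is four-dimensional, the reduced characteristic polynomial is a quartic in $\lambda$ which, as you correctly note, depends only on $\mu^2$; but this means it is a \emph{quadratic} in $\lambda^2/\mu^2$, with two leading coefficients $c_\pm$, not a single constant $c$. Writing it as $A_1A_2 X^2 + (A_1C_2+A_2C_1-B^2)X + C_1C_2=0$ for $X=\lambda_1^2$ (cf.\ \eqref{eq:quadratic}), the instability of $\cn$ does \emph{not} arise because some real root $c$ is positive, making $\lambda$ real: the paper verifies (condition \eqref{eq:ass2}) that the discriminant of this quadratic is strictly \emph{negative}, so $c_\pm$ are a non-real conjugate pair and the four bifurcating eigenvalues form a complex quadruplet $\pm\lambda_1,\pm\bar\lambda_1$ with $\Re\lambda_1>0$ \emph{and} $\Im\lambda_1>0$ (consistent with the figure-eight spectrum in Figure~\ref{fig:sfig5}). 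Your proposed sign check of a single Jacobian determinant ``making $c>0$ so that $\lambda=\pm\sqrt{c}\,\mu+O(\mu^2)$ is real'' is the criterion for a different (modulational, real-pair) instability mechanism and would fail to detect the cnoidal instability; you must compute the full quadratic and show its discriminant is negative, which requires the explicit elliptic-integral evaluation of all five quantities $A_1,A_2,B,C_1,C_2$.

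A secondary, unresolved point is the rigorous reduction itself. You correctly identify that $A_0$ is a nontrivial Jordan block so naive Rayleigh--Schr\"odinger theory fails, but you leave the fix (rescaling or Puiseux expansion) as an acknowledged obstacle. The paper circumvents abstract reduction entirely: it posits the ansatz $\lambda=\eps\lambda_1+\eps^2\lambda_2(\eps)$, $v=v_0+\eps v_1+\eps^2 v_2$, solves the solvability conditions at orders $\eps$ and $\eps^2$ exactly (this is where the quartic for $\lambda_1$ appears), and closes the remainder by a Lyapunov--Schmidt argument plus the implicit function theorem, which requires checking that a certain $2\times 2$ matrix $M$ is invertible --- a fact that again follows from $\lambda_1^2\notin\R$. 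You would also need to verify that your generalized kernel elements (e.g.\ $\partial_a\phi$) can be chosen $T$-periodic, which is not automatic and is handled in the paper by the explicit construction of $\phi_1$ and $\xi_1$ in Lemma~\ref{lem:explicit}.
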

This is stated more precisely in Theorem~\ref{thm:instability},
and is a consequence of a more
general perturbation result, Proposition~\ref{prop:perturbation},
which implies this instability for any real periodic wave for which a 
certain quantity has the right sign. In particular, the argument does not rely
on any integrability (beyond the ability to calculate the quantity in question
in terms of elliptic integrals).     

Perturbation argument were also used by \cite{GaHa07-1},
\cite{GalPel15}, but our strategy here is different. Instead of relying
on abstract theory to obtain the a priori existence of branches of
eigenvalues, we directly construct the branch in which we are
interested. This is done by first calculating the exact terms of the formal
expansion for the eigenvalue and eigenvector at the two first orders,
and then obtaining the rigorous existence for the rest of the expansion
using a contraction mapping argument. Note that the branch that we are
constructing was described in terms of Evans function in \cite{IvLa08}.

Finally, we complete our analytical results with some numerical
observations. Our motivation is to complete the variational
characterizations of periodic waves, which was only partial for snoidal
waves. We observe:
\begin{observation}\label{obs:4}
 Let $b<0$. For a given period, the unique (up to phase shift and
 translation) global minimizer of the energy with fixed mass and $0$
 momentum among half-anti-periodic functions is a (appropriately
 rescaled) snoidal function. 
\end{observation}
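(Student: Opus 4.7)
The plan is to follow the three-step variational strategy used for the focusing/cnoidal case in Theorem~\ref{thm:1} (cf.~Proposition~\ref{prop:focusing-A}), adapted to the defocusing regime and augmented to handle the zero-momentum constraint. Existence of a minimizer is immediate by the direct method: since $b<0$, the energy
\[
\mathcal E(\psi)=\tfrac12\|\psi_x\|_{L^2(0,T)}^2+\tfrac{|b|}{4}\|\psi\|_{L^4(0,T)}^4
\]
is coercive on the mass sphere of the half-anti-periodic subspace of $H^1_{\mathrm{loc}}\cap P_T$, so a minimizing sequence is bounded in $H^1$. The Rellich embedding $H^1(0,T)\hookrightarrow L^p(0,T)$ for any finite $p$ yields strong convergence along a subsequence, from which $\mathcal E$ is weakly lower semicontinuous and both $\mathcal M$ and $\mathcal P$ pass to the limit (for $\mathcal P$, one pairs a strongly $L^2$-convergent factor with a weakly $L^2$-convergent one in $\tfrac12\Im\int\psi\overline{\psi_x}\,dx$).

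The crux of the argument is the reduction to a real-valued minimizer. In the focusing/cnoidal setting this was achieved via the Fourier rearrangement Lemma~\ref{lem:stephen_trick}. Writing a half-anti-periodic $\psi$ as $\psi(x)=\sum_{n\ \mathrm{odd}}c_n e^{2\pi inx/T}$, the zero-momentum constraint becomes simply $\sum_n n\,|c_n|^2=0$, while $\mathcal M$ and the quadratic part of $\mathcal E$ depend only on the moduli $|c_n|$. One would seek a rearrangement of $\{c_n\}$ that preserves half-anti-periodicity, the sequence $\{|c_n|\}$, and this moment identity, while decreasing (or at least not increasing) $\|\psi\|_{L^4}^4$. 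Such a rearrangement would force the minimizer to have Hermitian-symmetric real Fourier coefficients, hence to be real-valued up to the symmetries of the problem. Once a real minimizer $u$ is in hand, the momentum Lagrange multiplier drops out, $u$ solves the real ODE $u_{xx}+bu^3+au=0$ for some $a\in\R$, and among non-constant half-anti-periodic real solutions this yields, after rescaling, the snoidal family (see Section~\ref{sec:preliminaries}). Uniqueness up to translation and phase then follows by the spectral/Sturm--Liouville analysis already used for $\cn$ and $\dn$: the Lam\'e linearization about any other real critical point has too many negative directions to be compatible with a constrained minimum.

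The principal obstacle lies in this rearrangement step. The momentum constraint introduces an additional Lagrange multiplier $c$ in the complex Euler--Lagrange equation $u_{xx}+b|u|^2u+au-icu_x=0$, and the gauge transformation $u=e^{icx/2}v$ that would absorb $c$ fails to preserve half-anti-periodicity unless $cT\in 8\pi\mathbb Z$, so one cannot simply gauge away the complex structure. Establishing a Fourier rearrangement that is simultaneously compatible with the half-anti-periodic boundary condition and with the linear-in-$|c_n|^2$ momentum constraint is precisely what is missing; in its absence we state the result as an observation, supported by the numerical experiments of the final section.
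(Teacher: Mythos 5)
Your assessment coincides with the paper's own: this statement is not proved there either. It appears as Conjecture~\ref{conj:minimizer-2}, supported only by the numerical experiments of Section~\ref{sec:num-exp} (Observation~\ref{obs:conjecture}), and the authors name exactly the obstruction you do --- showing the minimizer is real up to a phase --- the point being that the rearrangement of Lemma~\ref{lem:stephen_trick} \emph{increases} the $L^4$ norm and hence goes the wrong way when $b<0$, with or without the momentum constraint. The furthest the paper's analysis goes is Proposition~\ref{prop:defocusing-A-}: restricting to \emph{odd} half-anti-periodic functions reduces the problem to a Dirichlet problem on the half-period, where $u\mapsto|u|$ is admissible and realness comes for free; the identification with $\sn$ via Lemma~\ref{th:2-2} and the Sturm--Liouville uniqueness argument then proceed exactly as in your final step.
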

We have developed a numerical method to obtain the profile $\phi$ as
minimizer on two constraints, fixed mass and fixed (zero) momentum. We
use a heat flow algorithm, where at each time step the solution is
renormalized to satisfy the constraints. Mass renormalization is
simply obtained by scaling. Momentum renormalization is much
trickier. We define an auxiliary evolution problem for the momentum
that we solve explicitly, and plug back the solution we obtain to get
the desired renormalized solutions. 
We first have tested our algorithm in the cases where
our theoretical results hold and we have a good agreement between the
theoretical results and the numerical experiments. Then, we have
performed experiments on snoidal waves which led to Observation~\ref{obs:4}.

The rest of this paper is divided as follows. In Section
 ~\ref{sec:preliminaries}, we present the spaces of periodic
functions and briefly recall the main definitions and
properties of Jacobi elliptic functions and integrals. In Section
\ref{sec:variational}, we characterize the Jacobi elliptic functions
as global constraint minimizers and give the corresponding orbital
stability results. Section~\ref{sec:stability} is devoted to the proof
of spectral stability for cnoidal and snoidal waves, whereas in
Section~\ref{sec:instability} we prove the linear instability of
cnoidal waves. Finally, we present our numerical method in Section
\ref{sec:numerics}  and the numerical experiments in Section~\ref{sec:num-exp}.

\begin{acknowledgments}
We are grateful to Bernard Deconinck and Dmitri Pelinovsky for useful
remarks on a preliminary version of this paper. 
\end{acknowledgments}


\section{Preliminaries}
\label{sec:preliminaries}

This section is devoted to reviewing the classification of real-valued
periodic waves in terms of Jacobi elliptic functions.
 
\subsection{Spaces of Periodic Functions}
Let $T>0$ be a period. Denote by $\tau_T$  the translation operator
\[
  (\tau_T f)(x) = f(x+T),
\]
acting on $L^2_{\mathrm{loc}}(\R)$, and its eigenspaces 
\[
 P_{T}({\mu}) = \{ f \in L^2_{\mathrm{loc}}(\R): \tau_T f =  \mu f  \}
\]
for $\mu \in \mathbb C$ with $|\mu|=1$.
Taking $\mu=1$ yields the space of $T$-periodic functions
\[
  P_T = P_T(1) = \{ f \in L^2_{\mathrm{loc}}(\R): \, \tau_T f = f \},
\]
while for $\mu=-1$ we get the $T$-anti-periodic functions
\[
  A_T = P_T(-1) = \{ f \in L^2_{\mathrm{loc}}(\R): \, \tau_T f = - f\}.
\]
For $2 \le k \in \N$, letting $\mu$ run through the $k$th roots of unity: 
$\omega^k=1$, and  $\omega^j \not =1$ for $1 \le j<k$, we have
\[
  P_{kT} = \bigoplus_{j=0}^{k-1} P_{T}(\omega^j),
\]
where the decomposition of $f \in P_{kT}$ is given by
\[
f = \sum_{j=0}^{k-1} f_j,\quad f_j = \frac 1k \sum_{m=0}^{k-1} \omega^{-mj} \tau_{mT} f.
\]
Only the case $k=2$ is needed here:
\begin{equation} \label{eq:direct1}
  P_{2T} = P_{T} \oplus A_{T}, \qquad
  f = \frac 12 (f+\tau_T f) + \frac 12 (f-\tau_T f).
\end{equation}
Since the reflection $R : f(x) \mapsto f(-x)$
commutes with $\tau_{T}$ on $P_{2T}$,
we may further decompose into odd and even components in the usual way
\[
  f = f^+ + f^-, \quad
  f^{\pm} = \frac12 (f \pm Rf),
\]
to obtain
\[
  P_T = P_T^+ \oplus P_T^-, \;\; A_T = A_T^+ \oplus A_T^-, \quad
  P_T^{\pm} \; (A_T^{\pm} \;) 
  = \{ f \in P_T \; (A_T \; ) \; | \; f(-x) = \pm f(x) \}, 
\]  
and so
\begin{equation} \label{eq:direct2}
  P_{2T} = P_{T} \oplus A_{T} = P_T^+ \oplus P_T^- \oplus A_T^+ \oplus A_T^-.
\end{equation}
Each of these subspaces is invariant under~\eqref{eq:nls}, since
\[
  \psi \in P_T^{\pm} \;( A_T^{\pm} ) \; \implies \; |\psi|^2 \in P_T^+
  \; \implies \;
  \psi_{xx} + b |\psi|^2 \psi \in P_T^{\pm} \;( A_T^{\pm} ).
\]
When dealing with functions in $P_T$, we will denote norms such as 
$L^q(0,T)$ by
\[
  \norm{u}_{L^q} = \norm{u}_{L^q(0,T)} = \left( \int_0^T |u|^q \right)^{\frac{1}{q}},
\]
 and the \emph{complex} $L^2$ inner product by
  \begin{equation}\label{eq:inner-complex}
\ps{f}{g} =  \int _0^T f \bar g\, dx.
\end{equation}

\subsection{Jacobi Elliptic Functions}
Here we recall the definitions and main properties of the Jacobi elliptic functions. 
The reader might refer to treatises on elliptic functions
(e.g. \cite{La89}) or to the classical handbooks \cite{AbSt64,GrRy15} for more details.

Given $k\in(0,1)$, the \emph{incomplete elliptic integral of the first kind} 
in trigonometric form is 
\[
x = F(\phi,k):=\int_0^\phi \frac
{d \theta}{\sqrt{1-k^2 \sin^2(\theta)}},
\]
and the \emph{Jacobi elliptic functions} are defined through the inverse of $F(\cdot,k)$:
\[
\sn(x,k):= \sin (\phi), \quad \cn(x,k):=\cos (\phi), \quad
\dn (x,k):=\sqrt{1-k^2\sin^2 (\phi)}.
\]
The relations
\begin{equation}
\label{sn.sn}
1=\sn^2+\cn^2 = k^2 \sn^2 + \dn^2
\end{equation}
follow.  For extreme value $k=0$ we recover trigonometric functions,
\[
  \sn(x,0)=\sin (x), \quad \cn(x,0)=\cos( x), \quad \dn(x,0) = 1,
\]
while for extreme value $k=1$ we recover hyperbolic functions:
\[ 
  \sn(x,1)=\tanh (x), \quad \cn(x,1)=\dn(x,1)=\sech (x).
\]
The periods of the elliptic functions can be expressed in terms of the 
\emph{complete elliptic integral of the first kind}
\[
  K(k):=F\left(\frac\pi2,k\right), \quad K(k) \to \left\{ \begin{array}{cc}
  \frac{\pi}{2} & k \to 0 \\ \infty & k \to 1 \end{array} \right..  
\]
The functions $\sn$ and $\cn$ are $4K$-periodic while $\dn$
is $2K$-periodic. More precisely,
\[
  \dn\in P_{2K}^+, \quad \sn \in A_{2K}^- \subset P_{4K},
  \quad \cn\in A_{2K}^+ \subset P_{4K}.
\]

The derivatives (with respect to $x$) of elliptic functions can themselves
be expressed in terms of elliptic functions. For fixed $k\in(0,1)$, we have
\begin{equation}
  \label{dxsn}
\partial_x\sn = \cn \cdot \dn, \quad \partial_x\cn= -\sn\cdot \dn ,\quad \partial_x\dn= - k^2 \cn \cdot \sn,
\end{equation}
from which one can easily verify that $\sn$, $\cn$ and $\dn$
are solutions of 
\begin{equation}
  \label{eq:snls-1}
u_{xx}+a u+b|u|^2u=0,
\end{equation}
with coefficients $a,b\in\R$ for $k\in(0,1)$ given by
\begin{align}
  a&=1+k^2,& b&=-2k^2,&&\text{for }u=\sn,\label{eq:a-b-sn}\\
  a&=1-2k^2,& b&=2k^2,&&\text{for }u=\cn,\label{eq:a-b-cn}\\
  a&=-(2-k^2),& b&=2,&&\text{for }u=\dn. \label{eq:a-b-dn}
\end{align}


\subsection{Elliptic Integrals}
\label{ssec:elliptic}

For $k\in(0,1)$, the \emph{incomplete elliptic integral of the second
kind} in trigonometric form is defined by 
\[
E(\phi,k):=\int_0^\phi \sqrt{1-k^2 \sin^2(\theta)}
d \theta.
\]
The \emph{complete elliptic integral of the second kind} is defined as
\[
 E(k):=E\left(\frac\pi2,k\right) .
\]
We have the relations
(using $d \theta = \dn(z,k) dz$ and $x=F(\phi,k)$)
\begin{multline}
\label{eq:2.2}
E(\phi,k)
=\int_0^x\dn^2(z,k)dz
\\=x-k^2\int_0^x\sn^2(z,k)dz=(1-k^2)x+k^2\int_0^x\cn^2(z,k)dz,
\end{multline}
relating the elliptic functions to the elliptic integral of the second kind, and
\begin{equation}
\label{eq:2.1}
E(k) = K(k) - k^2 \int_0^K \sn^2(z,k)dz = (1-k^2)K(k) + k^2 \int_0^K \cn^2(z,k)dz,
\end{equation}
relating the elliptic integrals of first and second kind.
We can differentiate $E$ and $K$ with respect to $k$ and
express the derivatives in terms of $E$ and $K$:
\begin{align*}
\partial_k E(k)&=\frac{E(k)-K(k)}{k}<0,
\\
\partial_kK(k)&=\frac{E(k)-(1-k^2) K(k)}{k-k^3}
=\frac{k^2
\int_0^K\cn^2(x,k)dx}{k-k^3}>0.
\end{align*}
Note in particular  $K$ is increasing, $E$ is
decreasing. Moreover, 
\[
K(0)=E(0)=\frac\pi2,
\quad K(1-)=\infty, \quad E(1)=1.
\]

\subsection{Classification of Real Periodic Waves}
Here we make precise the fact that the elliptic functions provide the only (non-constant) 
real-valued, periodic solutions of~\eqref{eq:snls-1}.
Note that there is a two-parameter family of complex-valued, bounded, solutions 
for every $a,b\in \R$, $b \not = 0$ \cite{Ga94,GaHa07-1}.

\begin{lemma}[focusing case]\label{th:2-1}
Fix a period $T>0$, $a \in \R$, $b>0$ and $u \in P_T$ a non-constant real solution of 
\eqref{eq:snls-1}. By invariance under translation, and negation ($u \mapsto -u$),
we may suppose $u(0)=\max u>0$. 

\textup{(a)} If $0 \le \min u < u(0)$, then $a<0$, $|a|< b u(0)^2 < 2|a|$, and $u(x) = \frac 1\alpha \dn(\frac x \beta, k)$,

\textup{(b)} If $\min u<0$, then $\max(0,-2a)< b u(0)^2$, and  $u(x) = \frac 1\alpha \cn(\frac x \beta, k)$,

\noindent
for some $\alpha>0$, $\beta>0$, and $0<k<1$, uniquely determined by $T$, $a$, $b$ and $\max u$.
%
%
They satisfy the $a$-independent relations $b\beta ^2 =2 \alpha^2$ for \textup{(a)} and
 $b\beta^2 =2 k^2 \alpha^2  $ for \textup{(b)}.
\end{lemma}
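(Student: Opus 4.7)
The plan is standard phase-plane analysis of the autonomous second-order ODE~\eqref{eq:snls-1}. Multiplying by $u_x$ and integrating yields the first integral
\[
  u_x^2 = C - a u^2 - \tfrac{b}{2} u^4,
\]
where $u_x(0)=0$ (because $u(0)=\max u$) fixes $C = a u(0)^2 + (b/2) u(0)^4$. Substituting back gives the explicit factorization
\[
  u_x^2 = \bigl(u(0)^2 - u^2\bigr)\left[\tfrac{b}{2}\bigl(u^2 + u(0)^2\bigr) + a\right],
\]
so the geometry of each periodic orbit is controlled by the remaining root $u_*^2 := -u(0)^2 - 2a/b$ of this degree-four polynomial in $u$.

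A case split on the sign of $u_*^2$ then gives the two families. If $u_*^2 \in (0, u(0)^2)$, the orbit is trapped in $\{u_* \le u \le u(0)\}$ and never changes sign; this is case (a) and, after translating the two inequalities for $u_*^2$ into conditions on $(a,b,u(0))$, requires $a<0$ and $|a| < bu(0)^2 < 2|a|$. If instead $u_*^2 < 0$, the bracketed factor stays strictly positive, so $u$ oscillates between $\pm u(0)$ crossing zero; this is case (b) and requires $\tfrac{b}{2}u(0)^2 + a > 0$, i.e.\ $\max(0,-2a) < bu(0)^2$. The boundary values $u_*^2 = 0$ and $u_*^2 = u(0)^2$ correspond to heteroclinic/homoclinic orbits or stationary solutions, which are either non-periodic or constant, and are therefore excluded by hypothesis.

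To identify the profile, I insert the ansatz $u(x) = \alpha^{-1} v(x/\beta)$ into~\eqref{eq:snls-1} with $v = \dn(\cdot,k)$ in case (a) and $v = \cn(\cdot,k)$ in case (b); matching the resulting coefficients against~\eqref{eq:a-b-dn} and~\eqref{eq:a-b-cn} gives, respectively,
\[
  a\beta^2 = -(2-k^2), \qquad \frac{b\beta^2}{\alpha^2} = 2,
\]
and
\[
  a\beta^2 = 1-2k^2, \qquad \frac{b\beta^2}{\alpha^2} = 2k^2,
\]
while the normalization $\alpha u(0) = v(0) = 1$ fixes $\alpha = 1/u(0)$. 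This system uniquely determines $(\alpha,\beta,k) \in (0,\infty)^2 \times (0,1)$ from the data $(a,b,u(0))$, and the two $a$-independent identities $b\beta^2 = 2\alpha^2$ and $b\beta^2 = 2k^2\alpha^2$ are immediate from the second relation in each block. The period $T$ enters only as the compatibility condition that it be an integer multiple of the fundamental period $2K(k)\beta$ of the resulting solution.

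The main obstacle, I expect, is verifying that the range $k \in (0,1)$ obtained from the algebraic match coincides exactly with the range of $(a,b,u(0))$ identified in the phase-plane step, and in particular ruling out the degenerate endpoints $k=0$ (constant solution) and $k=1$ (solitary, non-periodic solution). Uniqueness is essentially built in: once the turning points of $u$ are pinned down by the first integral the orbit is determined by the ODE, and the choice of $\dn$ versus $\cn$ is forced by whether the orbit crosses zero.
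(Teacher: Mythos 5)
Your proposal is correct and follows essentially the same route as the paper: the first integral, the phase-plane/root analysis of the quartic $C - au^2 - \tfrac{b}{2}u^4$ yielding exactly the stated inequalities, the rescaling $u = \alpha^{-1}v(\cdot/\beta)$ with coefficient matching to determine $(\alpha,\beta,k)$, and ODE uniqueness (via $v(0)=1$, $v'(0)=0$) to identify $v$ with $\dn$ or $\cn$. (Two minor points: the fundamental period of the rescaled $\cn$ solution is $4K(k)\beta$, not $2K(k)\beta$; and the consistency of the range $k\in(0,1)$ with the phase-plane inequalities, which you flag as the remaining obstacle, is a short computation that the paper carries out explicitly and that does work out.)
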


Note that here $T$ may be any multiple of the fundamental period of $u$.
An $a$-independent relation is useful since $a$ will be the unknown Lagrange multiplier for our constrained minimization problems in 
 Section~\ref{sec:variational}.
 
\begin{proof} The first integral is constant: there exists $C_0\in \R$ such that
\[
u_x^2 + au^2 + \frac b2 u^4 = C_0.
\]
A periodic solution has to oscillate in the energy well $W(u)=au^2 + \frac b2 u^4$ with energy level  $C_0$.
If $0\le \min u$, then $a<0$ and $C_0<0$. If $\min u<0$, then $C_0>0$.
Let $u(x)=\frac 1\alpha v(\frac x \beta)$ with $\alpha = (\max u)^{-1}$. Then $v$ satisfies
\[
\max v = v(0)=1, \quad v'' + a \beta^2 v + \frac {b\beta^2 }{\alpha^2} v^3=0.
\]

(a) If $0\le \min u$, then $a<0$ and $C_0<0$. Let $0<y_1<y_2$ be the roots of $ ay + \frac b2 y^2 = C_0<0$. Then $u(0)^2 = y_2 \in (-a/b, -2a/b)$.

Let $\beta = \alpha \sqrt{2/b}$. Then $\frac {b\beta^2 }{\alpha^2} =2$ and $a \beta^2 \in (-2,-1)$, and there is a unique $k\in (0,1)$ so that $a \beta^2 = -2+k^2$. Thus 
\[
\max v = v(0)=1, \quad v'(0)=0, \quad v'' + (-2+k^2) v+ 2v^3=0.
\]
By uniqueness of the ODE, $v(x) = \dn(x,k)$ is the only solution. Hence $u(x) = \frac 1\alpha \dn(\frac x \beta, k)$.

(b) If $\min u<0$, then $C_0>0$. Let $y_1<0<y_2$ be the roots of $ ay + \frac b2 y^2 = C_0>0$. Then $u(0)^2 = y_2 > \max(0,  -2a/b)$ no matter $a<0$ or $a \ge 0$.
We claim we can choose unique $\beta>0$ and $k\in (0,1)$ so that
\[
a \beta^2 = 1-2k^2, \quad  \frac {b\beta^2 }{\alpha^2}  = 2k^2.
\]
The sum gives $(a+  \frac { b}{\alpha^2})\beta^2=1$, thus $\beta=(a+  \frac { b}{\alpha^2})^{-1/2}$ noting  $(a+  \frac { b}{\alpha^2})>0$, and
\[
k^2 = \frac {\beta^2 b}{2 \alpha^2} = \frac { b}{2(b+a \alpha^2)} \in (0,1)
\]
no matter $a<0$ or $a \ge 0$. 
Thus 
\[
\max v = v(0)=1, \quad v'(0)=0, \quad v'' + ( 1-2k^2) v+ 2k^2 v^3=0.
\]
By uniqueness of the ODE, $v(x) = \cn(x,k)$ is the only solution. Hence $u(x) = \frac 1\alpha \cn(\frac x \beta, k)$.
\end{proof}

\begin{lemma}[defocusing case]\label{th:2-2}
Fix a period $T>0$, $a \in \R$, $b<0$ and $u \in P_T$ a non-constant, real solution of 
\eqref{eq:snls-1}. By invariance under translation and negation, suppose $u(0)=\max u>0$. 
Then $0< |b| u(0)^2 < a$, and $u(x) = \frac 1\alpha \sn(K(k)+\frac x \beta, k)$,
for some $\alpha>0$, $\beta>0$, and $0<k<1$, uniquely determined by $T$, $a$, $b$ and $\max u$.
They satisfy the $a$-independent relation  $ b\beta^2 = -2 k^2 \alpha^2$.
\end{lemma}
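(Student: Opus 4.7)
The plan is to run the same program as in Lemma~\ref{th:2-1}, adapted to the defocusing inverted-quartic well. First, multiplying \eqref{eq:snls-1} by $u_x$ and integrating yields the first integral
\[
u_x^2 + a u^2 + \tfrac{b}{2} u^4 = C_0
\]
for some constant $C_0\in\R$. Since $b<0$, the potential $W(y) = a y^2 + \tfrac{b}{2} y^4$ is an inverted quartic with $W(0)=0$ and, for $a>0$, with local maxima $W(\pm\sqrt{a/|b|}) = a^2/(2|b|)$. If $a\leq 0$, then $W$ is non-increasing in $|y|$, so no non-constant bounded orbit exists; hence $a>0$ and $0<C_0<a^2/(2|b|)$. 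The component of $\{W\leq C_0\}$ containing $0$ is the interval $[-\sqrt{z_1},\sqrt{z_1}]$, where $z_1$ is the smaller positive root of $(b/2)z^2+az-C_0=0$; the orbit oscillates symmetrically across this interval, so $\min u = -u(0)$ and $u(0)^2 = z_1 < a/|b|$, i.e.\ $0<|b|u(0)^2<a$, as claimed.

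Second, I would rescale exactly as in Lemma~\ref{th:2-1}: set $\alpha = 1/u(0)$ and $u(x) = \tfrac{1}{\alpha}v(x/\beta)$, so that $v(0) = \max v = 1$, $v'(0)=0$, and
\[
v'' + a\beta^2 v + \tfrac{b\beta^2}{\alpha^2} v^3 = 0.
\]
To match the snoidal ODE \eqref{eq:a-b-sn} I impose $a\beta^2 = 1+k^2$ and $b\beta^2/\alpha^2 = -2k^2$, which rearrange to
\[
\beta^2 = \frac{2k^2\alpha^2}{|b|}, \qquad k^2 = \frac{|b|}{2a\alpha^2 - |b|}.
\]
The strict inequality $|b|u(0)^2<a$, equivalent to $|b|<a\alpha^2$, is exactly what makes $2a\alpha^2-|b|>|b|$, ensuring $k^2\in(0,1)$; the weaker inequality $|b|u(0)^2<2a$ ensures the denominator is positive, so $k^2>0$. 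These formulas visibly determine $\alpha,\beta,k$ uniquely from $a$, $b$, $\max u$, and the relation $b\beta^2 = -2k^2\alpha^2$ drops out immediately.

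Third, I would identify $v$ explicitly using uniqueness for the initial value problem. The function $\tilde v(x) := \sn(K(k)+x,k)$ satisfies the same ODE, and from \eqref{dxsn} together with $\sn(K,k)=1$, $\cn(K,k)=0$, $\dn(K,k)=\sqrt{1-k^2}$, it has initial data $\tilde v(0)=1$, $\tilde v'(0)=\cn(K,k)\dn(K,k)=0$, matching $v$. Hence $v(x) = \sn(K+x,k)$ and $u(x) = \tfrac{1}{\alpha}\sn(K + x/\beta,k)$. The only non-routine step is the potential-well sign analysis in the first paragraph (ruling out $a\leq 0$ and locating the oscillation on the component of $\{W\leq C_0\}$ around the origin); once that is settled, the rescaling and the identification via ODE uniqueness are essentially a transcription of the focusing argument.
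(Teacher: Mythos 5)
Your proposal is correct and follows essentially the same route as the paper's proof: the first integral $u_x^2+au^2+\tfrac b2 u^4=C_0$, the potential-well analysis forcing $a>0$ and $0<C_0<a^2/(2|b|)$ with $u(0)^2$ the smaller positive root (giving $0<|b|u(0)^2<a$), the rescaling $u=\tfrac1\alpha v(\cdot/\beta)$ with $a\beta^2=1+k^2$, $b\beta^2/\alpha^2=-2k^2$, and identification with $\sn(K+\cdot,k)$ by ODE uniqueness. Your formulas for $\beta$ and $k^2$ agree with the paper's (since $2a\alpha^2+b=2a\alpha^2-|b|$), and you are if anything slightly more explicit about why $a\le 0$ is excluded and why the orbit lies in the bounded component of $\{W\le C_0\}$.
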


\begin{proof} The first integral is constant: there exists $C_0\in \R$ such that
\[
u_x^2 + au^2 + \frac b2 u^4 = C_0.
\]
A periodic solution has to oscillate in the energy well $W(u)=au^2 + \frac b2 u^4$ with energy level  $C_0$.
Hence  $a>0$ and $0<C_0<\max W = \frac {a^2}{-2b}$. 
Let $u(x)=\frac 1\alpha v(\frac x \beta)$ with $\alpha = (\max u)^{-1}$. Then $v$ satisfies
\[
\max v = v(0)=1, \quad v'' + a \beta^2 v + \frac {b\beta^2 }{\alpha^2} v^3=0.
\]

Let $0<y_1<y_2$ be the roots of $ ay + \frac b2 y^2 = C_0$. Then $u(0)^2 = y_1 \in (0,-a/b)$.

Let $\beta = (\frac {2 \alpha^2} {2\alpha^2 a +b})^{1/2}$ and $k = (\frac{-b}{2\alpha^2 a +b})^{1/2}$,  noting ${2\alpha^2 a +b}>0$. Then $a\beta^2=1+k^2$, $\frac {b\beta^2 }{\alpha^2} = -2 k^2$, and $v$ satisfies 
\[
\max v = v(0)=1, \quad v'(0)=0, \quad v'' + (1+k^2) v  -2 k^2v^3=0.
\]
By uniqueness of the ODE, $v(x) = \sn(K(k)+x,k)$ is the only solution. Hence $u(x) = \frac 1\alpha \sn(K(k)+\frac x \beta, k)$.
\end{proof}


\section{Variational Characterizations and Orbital Stability}
\label{sec:variational}

Our goal in this section is to characterize the Jacobi elliptic functions as \emph{global} 
constrained energy minimizers. As a corollary, we recover some known results on orbital
stability, which is closely related to \emph{local} variational information.  

\subsection{The Minimization Problems}

Recall the basic conserved functionals for~\eqref{eq:nls} on $H^1_{\mathrm{loc}} \cap P_T$:
\begin{gather*}
  \mathcal M(u)=\frac12\int_0^T|u|^2dx,\quad
  \mathcal P(u)=\frac12\Im\int_0^Tu\bar u_xdx,\\
  \mathcal E(u)=\frac12\int_0^T|u_x|^2dx-\frac
  b4\int_0^T|u|^4dx.
\end{gather*}
In this section, we consider $L^2(0,T;\mathbb
  C)$ as a \emph{real} Hilbert space with scalar product
  $\Re\int_0^T f\bar gdx$. This way, the functionals $\mathcal E$,
  $\mathcal M$ and $\mathcal P$
  are $C^1$ functionals. This also ensures that the Lagrange
  multipliers are real. 
  Note that we see $L^2(0,T;\mathbb
  C)$ as a real Hilbert space only in the current section and in all
  the other sections it will be seen as a complex Hilbert space with the
  scalar product defined in \eqref{eq:inner-complex}.

Fix parameters $T>0$, $a,b\in \R$, $b \not = 0$. 
Since the Jacobi elliptic functions (indeed any standing wave profiles) 
are solutions of~\eqref{eq:snls-1}, they are critical points 
of the \emph{action} functional  $\mathcal S_a$ defined by
\[
\mathcal S_a(u)=\mathcal E(u)-a\mathcal M(u),
\]
where the values of $a$ and $b$ are given in
\eqref{eq:a-b-sn}-\eqref{eq:a-b-dn} and the fundamental periods
are $T=2K$ for $\dn$, $T=4K$ for $\sn,\cn$.
Given $m>0$, the basic variational problem is to minimize
the energy with fixed mass:
\begin{equation}
\label{eq:min-prob-m}
\min
\left\{
\mathcal E(u) \; | \; \mathcal M(u)=m
,u\in H_{\mathrm{loc}}^1\cap P_T
\right\},
\end{equation}
whose Euler-Lagrange equation 
\begin{equation} \label{eq:EL}
  u''+ b|u|^2u + {a} u = 0,
\end{equation}
with ${a} \in \R$ arising as Lagrange multiplier, is
indeed of the form~\eqref{eq:snls-1}.
Since the momentum is also conserved for~\eqref{eq:nls}, it is natural
to consider the problem with a further momentum constraint:
\begin{equation}
\label{eq:min-prob-m-p}
\min
\left\{
\mathcal E(u) \; | \; \mathcal M(u)=m,
\mathcal P(u)=0,u\in H_{\mathrm{loc}}^1\cap P_T
\right\}.
\end{equation}
\begin{remark}
Note that if a minimizer $u$ of~\eqref{eq:min-prob-m} is such that $P(u)=0$,
then it is real-valued (up to multiplication by a complex number of modulus $1$). 
Indeed, it verifies~\eqref{eq:EL} for some ${a}\in\R$.
It is well known (see e.g. \cite{GaHa07-2}) that the momentum density
$\Im(u_x \bar u )$ is therefore constant in $x$, and so it is identically $ 0$  
if $P(u)=0$. For $u(x)\neq 0$ we can
write $u$ as $u=\rho e^{i\theta}$, and express the momentum density as
$\Im(u_x\bar u)=\theta_x\rho^2$. Thus $\Im(u_x\bar u )=0$
implies $\theta_x=0$ and thus $\theta(x)$ is constant  as long as $u(x) \not =0$. If $u(x_0)=0$
and $ e^{\theta(x_0-)} \not = e^{\theta(x_0+)}$, 
we must have $u_x(x_0)=0$, and hence $u \equiv 0$ by uniqueness of the ODE.
\end{remark}
Since~\eqref{eq:nls} preserves the subspaces in the decomposition~\eqref{eq:direct1},
it is also natural to consider variational problems restricted to 
anti-symmetric functions,
\begin{gather}
\label{eq:min-prob-m-A}
\min
\left\{
\mathcal E(u) \; | \; \mathcal M(u)=m
,u\in H_{\mathrm{loc}}^1\cap A_{T/2}
\right\}, \\
\label{eq:min-prob-m-p-A}
\min
\left\{
\mathcal E(u) \; | \; \mathcal M(u)=m,\mathcal P(u)=0,u\in H_{\mathrm{loc}}^1\cap A_{T/2}
\right\},
\end{gather}
and in light of the decomposition~\eqref{eq:direct2}, further
restrictions to even or odd functions may also be considered.

In general, the difficulty does not lie in proving the existence of a minimizer,
but rather in identifying this minimizer with an elliptic function, since we are minimizing among \emph{complex valued} functions, and moreover restrictions to
symmetry subspaces prevent us from using classical variational methods 
like symmetric rearrangements.

We will first consider the minimization problems
\eqref{eq:min-prob-m} and~\eqref{eq:min-prob-m-p}  for periodic
functions in $P_{T}$. Then we will 
consider the minimization problems
\eqref{eq:min-prob-m-A} and~\eqref{eq:min-prob-m-p-A} for
half-anti-periodic functions in $A_{T/2}$. In both parts, we will treat separately the
focusing ($b>0$) and defocusing ($b<0$) nonlinearities. 
For each case, we will show the existence of a unique
(up to phase shift and translation)
minimizer, and we will identify it with either a plane wave or a
Jacobi elliptic function.

\subsection{Minimization Among Periodic Functions}

\subsubsection{The Focusing Case in $P_T$}

\begin{proposition}\label{prop:focusing-P_T}
  Assume $b>0$.
  The minimization problems~\eqref{eq:min-prob-m}
  and~\eqref{eq:min-prob-m-p}
  satisfy the following properties.
  \begin{itemize}
    \item[(i)] For all $m>0$,~\eqref{eq:min-prob-m}  and
     ~\eqref{eq:min-prob-m-p} share the same minimizers. The minimal energy is finite and negative.
  \item[(ii)]   For all $0<m\leq \frac{\pi^2}{bT}$ there exists a unique (up to
    phase shift) minimizer
  of~\eqref{eq:min-prob-m}. It is the constant function $u_{\min}\equiv \sqrt{\frac{2m}{T}}$.
  \item[(iii)] For all $\frac{\pi^2}{bT}<m<\infty$ there exists a unique (up
    to translations and phase shift) minimizer of
   ~\eqref{eq:min-prob-m}. It is the
    rescaled function
    $\dn_{\alpha,\beta,k}=\frac1\alpha\dn\left(\frac\cdot\beta,k\right)$
    where the parameters $\alpha$, $\beta$ and $k$ are uniquely
    determined. Its fundamental period is $T$. The map from $m \in (\frac{\pi^2}{bT},\infty)$ to $k \in (0,1)$ is one-to-one, onto and increasing.   
\item[(iv)] In particular, given $k\in(0,1)$, $\dn=\dn(\cdot,k)$, if $b=2$,
  $T=2K(k)$, and
  $m=\mathcal M(\dn)=E(k)$,  then the unique (up to translations and phase shift) minimizer of
 ~\eqref{eq:min-prob-m} is $\dn$.
  \end{itemize}
\end{proposition}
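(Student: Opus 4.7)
The plan is four-fold: (a) establish existence for both~\eqref{eq:min-prob-m} and~\eqref{eq:min-prob-m-p} by the direct method; (b) identify the two problems via a modulus trick, which simultaneously reduces to real non-negative minimizers; (c) use Lemma~\ref{th:2-1} to confine the minimizer to an explicit short list of candidates (constants, rescaled $\dn$'s and rescaled $\cn$'s of fundamental period $T/n$); and (d) rank the candidate energies using the elliptic-integral identities of Section~\ref{ssec:elliptic}.

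For (i), existence in $H^1_{\mathrm{loc}}\cap P_T$ follows from the direct method: a one-dimensional Gagliardo-Nirenberg bound combined with the mass constraint controls $\int|u|^4$ by a sublinear power of $\int|u_x|^2$, so $\mathcal E$ is coercive on the constraint; the compact embedding $H^1\cap P_T\hookrightarrow L^4\cap P_T$ and lower semicontinuity complete the compactness argument. The constant $u_c\equiv\sqrt{2m/T}$ gives $\mathcal E(u_c)=-bm^2/T<0$, so the minimum is negative. For the equivalence of the two problems I use the standard modulus trick: setting $\rho:=|u|\in H^1$ yields $\mathcal M(\rho)=\mathcal M(u)$, $\mathcal P(\rho)=0$, $\int\rho^4=\int|u|^4$, and $\int\rho_x^2\leq\int|u_x|^2$ with equality if and only if the momentum density $\Im(\bar u u_x)$ vanishes almost everywhere. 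Hence $\mathcal E(\rho)\leq\mathcal E(u)$, so every minimizer of~\eqref{eq:min-prob-m} must attain this equality and in particular satisfies $\mathcal P(u)=0$; combined with the trivial inequality $\min\eqref{eq:min-prob-m}\leq\min\eqref{eq:min-prob-m-p}$, this shows the two problems share minimizers. The same argument shows the minimizer can be taken, after a constant phase rotation, real-valued and non-negative.

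For (ii) and (iii), such a real non-negative minimizer solves~\eqref{eq:EL} for some real Lagrange multiplier $a$, so by Lemma~\ref{th:2-1} it is either a constant, a rescaled $\dn$, or a rescaled $\cn$, with fundamental period $T/n$ for some integer $n\geq 1$. Parametrizing the fundamental-period $\dn$-branch ($n=1$) by $k\in(0,1)$ via the relations $\beta=T/(2K(k))$ and $b\beta^2=2\alpha^2$ from Lemma~\ref{th:2-1}, a short computation using~\eqref{eq:2.2} yields
\[
  m(k)=\frac{\beta\,\int_0^{2K(k)}\dn^2(y,k)\,dy}{2\alpha^2}=\frac{4K(k)E(k)}{bT},
\]
and the formulas of Section~\ref{ssec:elliptic} give $\partial_k(KE)=(E^2-(1-k^2)K^2)/(k(1-k^2))>0$ for $k\in(0,1)$ (the numerator vanishes at $k=0$ and has strictly positive derivative $2(E-K)^2/k$); hence $m(k)$ is a strictly increasing bijection from $(0,1)$ onto $(m_*,+\infty)$, where $m_*:=\pi^2/(bT)$. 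A second-variation argument (Poincar\'e's inequality on the mean-zero subspace of $P_T$) shows that the constant $u_c$ is a strict local minimum on the constraint for $m<m_*$ and becomes a saddle for $m>m_*$; analogous parametrizations show the sub-period $\dn$-branches live in $[n^2 m_*,\infty)$ for $n\geq 2$, while each $\cn$-branch of period $T/n$ lives in $(0,\infty)$.

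The main obstacle, in my view, is the final energy ranking: showing that for $m\leq m_*$ the constant beats every $\cn$-branch, and that for $m>m_*$ the fundamental-period $\dn_k$ beats the constant, every $\cn$-branch, and every sub-period $\dn$-branch. Since along any solution of~\eqref{eq:EL} one has the identity $\mathcal E(u)=am+\frac{b}{4}\int|u|^4$, this reduces to comparing $am+\frac{b}{4}\int|u|^4$ across branches, with both $a$ and $\int|u|^4$ expressible in elliptic integrals via~\eqref{eq:2.2}--\eqref{eq:2.1}; the Cauchy--Schwarz bound $\int|u|^4\geq 4m^2/T$ (with equality only for constants) handles the constant in a sharp way, and the comparison with the elliptic branches becomes a matter of monotonicity properties of $E(k)$ and $K(k)$. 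Part (iv) is then an immediate specialization of (iii): with $b=2$ and $T=2K(k)$, Lemma~\ref{th:2-1}'s relations collapse to $\beta=1$ and $\alpha=1$, so the unique minimizer is $\dn(\cdot,k)$ itself, whose mass is $E(k)$.
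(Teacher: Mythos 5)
Your skeleton (direct method, modulus trick to identify the two problems and reduce to real non-negative minimizers, classification via Lemma~\ref{th:2-1}, and the monotone parametrization $m=\frac{4}{bT}E(k)K(k)$) matches the paper's, but the decisive identification step is missing. You name the ``energy ranking'' across the constant, the $\cn$-branches and the sub-period $\dn$-branches as the main obstacle and only sketch a strategy for it; that comparison is never carried out, so (ii), (iii) and the uniqueness claims are not actually established. Moreover, half of that obstacle is self-inflicted: once the minimizer is known to be real and non-negative --- which you establish yourself in step (b) --- Lemma~\ref{th:2-1} puts you in case (a), not case (b): the $\cn$ profiles require $\min u<0$ and are excluded outright, with no energy comparison needed.

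The remaining genuine piece --- ruling out rescaled $\dn$'s of fundamental period $T/n$ with $n\ge 2$, which do coexist with the $n=1$ branch once $m>n^2\pi^2/(bT)$ --- is handled in the paper not by comparing energies but by a second-variation/Sturm--Liouville argument: a constrained minimizer forces $-\partial_{xx}-a-3bu_\infty^2$ to have at most one negative eigenvalue, while $\partial_x u_\infty$ lies in its kernel with $2n$ zeros per period, giving at least $2n-1$ negative eigenvalues and hence $n=1$. The same Morse-index bound is what discards the constant for $m>\pi^2/(bT)$ (your Poincar\'e computation is this argument in disguise), and the converse direction --- that the constant is the minimizer for $m\le\pi^2/(bT)$ --- then follows for free from the fact that the $n=1$ $\dn$-branch only carries masses $m>\pi^2/(bT)$, so no energy ranking is needed there either. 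Without the $\cn$ exclusion and the Sturm--Liouville step, your argument does not close.
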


\begin{proof}
  Without loss of generality, we can restrict the minimization to
  real-valued non-negative functions. Indeed, if $u\in H^1_{\mathrm{loc}}\cap P_T$, then
  $|u|\in  H^1_{\mathrm{loc}}\cap P_T$ and we have 
\[
  \norm{\partial_x|u|}_{L^2}\leq \norm{\partial_xu}_{L^2}.
\]
This readily implies that~\eqref{eq:min-prob-m}  and
\eqref{eq:min-prob-m-p} share the same minimizers. 
Let us prove that 
  \begin{equation}\label{eq:4}
  -\infty<
  \min
\left\{
\mathcal E(u) \; | \; \mathcal M(u)=m, u\in H^1_{\mathrm{loc}}\cap P_T
\right\}<0.
  \end{equation}
 The last inequality in~\eqref{eq:4} is obtained using the constant
 function $\varphi_{m,0}\equiv \sqrt{\frac{2m}{T}}$ as a test function:
\[ 
\mathcal E(\varphi_{m,0})<0,\quad\mathcal M(\varphi_{m,0})=m.
\]
To prove the first inequality  in~\eqref{eq:4}, we observe that by
Gagliardo-Nirenberg inequality we have 
\[
\norm{u}_{L^4}^4\lesssim \norm{u}_{L^2}^3\norm{u_x}_{L^2} + \norm{u}_{L^2}^4.
\]
Consequently, for $u\in H^1_{\mathrm{loc}}\cap P_T$ such that $\mathcal M(u)=m$, we have
\[
\mathcal E(u)\gtrsim \norm{u_x}_{L^2}\left( \norm{u_x}_{L^2}-m^{3/2}\right) - m^2,
\]
and $\mathcal E$ has to be bounded from below. The above shows (i).

Consider now a minimizing sequence
  $(u_n)\subset H^1_{\mathrm{loc}}\cap P_T$ for~\eqref{eq:min-prob-m}. It is bounded in $H^1_{\mathrm{loc}}\cap P_T$ and
  therefore, up to a subsequence, it converges weakly in $H^1_{\mathrm{loc}}\cap P_T$ and
  strongly in $L^2_{\mathrm{loc}}\cap P_T$ and $L^4_{\mathrm{loc}}\cap P_T$ towards $u_\infty\in H^1_{\mathrm{loc}}\cap P_T$. Therefore
  $\mathcal E(u_\infty)\leq \mathcal E(u_n)$ and $\mathcal M(u_\infty)=m$. This implies that
  $\norm{\partial_xu_\infty}_{L^2}=\lim_{n\to\infty }\norm{\partial_x
    u_n}_{L^2}$ and therefore the convergence from $u_n$ to $u_\infty$ is
  also strong in $H^1_{\mathrm{loc}}\cap P_T$. Since $u_\infty$ is a minimizer of
 ~\eqref{eq:min-prob-m}, there exists a Lagrange multiplier ${a}\in\R$
  such that
  \[
 -
\mathcal E'(u_\infty)+{a} \mathcal M'(u_\infty)=0,
\]
that is 
\begin{equation*}
\partial_{xx}u_\infty+b u_\infty^3+{a} u_\infty=0.
\end{equation*} 
Multiplying by $u_\infty$ and integrating, we find that
\[
{a}=\frac{\norm{\partial_xu_\infty}_{L^2}^2-b\norm{u_\infty}_{L^4}^4}{\norm{u_\infty}_{L^2}^2}.
\] 
Note that
\[
\norm{\partial_xu_\infty}_{L^2}^2-b\norm{u_\infty}_{L^4}^4=2\mathcal E(u_\infty)-\frac
b2 \norm{u_\infty}_{L^4}^4<0,
\]
therefore 
\[
{a}<0.
\]

We already have $u_\infty \in \R$, and we may assume $\max u = u(0)$ by translation.
By Lemma~\ref{th:2-1} (a),  either $u_\infty$ is constant or there exist
$\alpha,\beta\in(0,\infty)$ and $ k\in(0,1)$ such that $\beta = \alpha \sqrt{2/b}$ and
\[
u_\infty(x)=\dn_{\alpha,\beta,k}(x)=\frac{1}{\alpha}\dn\left(\frac{x}{\beta},k\right).
\]

We now show that the minimizer $u_\infty$ is of the form $\dn_{\alpha,\beta,k}$ if $m>\frac{\pi^2}{bT}$.
Indeed,
assuming by contradiction that $u_\infty$ is a constant,  we necessarily have
$u_\infty\equiv\sqrt{\frac {2m}{T}}$. The Lagrange multiplier can
also be computed and we find ${a}=-bu_\infty^2=-\frac{2bm}{T}$. Since $u_\infty$ is supposed to be a constrained minimizer for
\eqref{eq:min-prob-m}, the operator 
\[
-\partial_{xx}-{a}-3bu_\infty^2=-\partial_{xx}-\frac{4bm}{T}
\]
must have Morse index at
most $1$, i.e. at most $1$ negative eigenvalue. The eigenvalues are
given for $n\in\mathbb Z$ by the formula 
\[
\left(\frac{2\pi n}{T}\right)^2-\frac{4bm}{T}.
\]
Obviously $n=0$ gives a negative eigenvalue. For $n=1$, the eigenvalue
is non-negative if and only if 
\[
m\leq \frac{\pi^2}{bT},
\]
which gives the contradiction. Hence when $m>\frac{\pi^2}{bT}$ the
minimizer $u_\infty$ must be of the form
$\dn_{\alpha,\beta,k}$.

There is a positive integer $n$ so that the fundamental period of
$u_\infty=\dn_{\alpha,\beta,k}$ is $ {2K(k)}{\beta} = Tn^{-1 }$.
As already
mentioned, since $u_\infty$ is a minimizer for
\eqref{eq:min-prob-m}, the operator 
\[
-\partial_{xx}- {a} - 3b u_\infty ^2 
\]
can have at most one negative eigenvalue. The function
$\partial_x u_\infty$ is in its kernel and has $2n$
zeros. By Sturm-Liouville
theory (see e.g. \cite{Ea73,ReSi78}) we have at least $2n-1$ eigenvalues below $0$. Hence $n=1$ and $ {2K(k)}{\beta} =T$.

Using $2 \alpha^2 = b\beta ^2$ (see Lemma~\ref{th:2-1}), the mass
verifies, 
\[
m=\frac12\int_0^T
|\dn_{\alpha,\beta,k}(x)|^2dx=\frac{\beta}{\alpha^2}\frac12\int_0^{2K(k)}
|\dn(y,k)|^2dy=\frac{2}{b\beta}E(k)
\]
where $E(k)$ is given in Section~\ref{ssec:elliptic}. 
Using $ {2K(k)}{\beta} =T$,
\begin{equation}\label{eq:mk}
m = \frac 4{bT} E(k) K(k).
\end{equation}
Note
\[
\frac{\partial}{\partial k}  EK(k)=\frac{  E(k)^2-(1-k^2)K(k)^2}{(1-k^2)k}>0,
\]
where the positivity of the numerator is because it vanishes at $k=0$ and 
\[
\frac{\partial}{\partial k} (E^2-(1-k^2)K^2 )= \frac 2k (E-K)^2, \quad (0<k<1).
\]
Thus
 $ EK(k)$ varies from $\frac{\pi^2}{4}$ to $\infty$
when $k$ varies from $0$ to $1$. Thus~\eqref{eq:mk} defines $m$ as a strictly increasing function of $k\in (0,1)$ with range $(\frac {\pi^2} {bT},\infty)$ and hence has an inverse function.  For fixed $b,m,T$, the value $k \in (0,1)$ is uniquely determined by ~\eqref{eq:mk}.
We also have $\beta = \frac T{2K(k)}$ and $\alpha = \beta \sqrt {b/2}$.
The above shows (iii). 

The above calculation also shows that $m > \frac{\pi^2} {bT}$ if $u_\infty = \dn_{\alpha,\beta,k}$. Thus $u_\infty$ must be a constant when $0 < m \le 
\frac{\pi^2} {bT}$.
This shows (ii).

In the case we are given  $k\in(0,1)$, 
$T=2K(k)$, $b=2$ and $m=\mathcal M(\dn)=E(k)$, we want to show that $u_\infty (x)= \dn(x,k)$. In this case 
$m>\frac {\pi^2}{bT}$ since $EK > \frac {\pi^2}{4}$. Thus, by Lemma~\ref{th:2-1} (a),  $u_\infty = \dn_{\alpha,\beta,s}$ for some $\alpha,\beta>0$ and $s \in (0,1)$, up to translation and phase.
By the same Sturm-Liouville
theory argument, 
the fundamental period of $u_\infty$ is $T= {2K(s)}{\beta} $. The same calculation leading to~\eqref{eq:mk} shows
\[
m = \frac 4{bT} E(s) K(s).
\]
Thus $E(k)K(k)=E(s) K(s)$.
Using the monotonicity of $EK(k)$ in $k$, we have $k=s$.  Thus $\alpha=\beta=1$ and  $u_\infty (x)= \dn(x,k)$. 
This gives (iv) and finishes the proof.
\end{proof}

\subsubsection{The Defocusing Case in $P_T$}

\begin{proposition}\label{prop:defocusing-P}
Assume $b<0$. For all $0<m<\infty$, the constrained minimization problems   
\eqref{eq:min-prob-m} and 
~\eqref{eq:min-prob-m-p} have the same unique  (up to phase
shift) minimizers, which is the constant function
$u_{\min}\equiv \sqrt{\frac{2m}{T}}$.
\end{proposition}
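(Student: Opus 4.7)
The plan is to exploit the fact that in the defocusing case $b<0$ both constituent pieces of $\mathcal{E}$, namely $\tfrac12\int_0^T|u_x|^2\,dx$ and $-\tfrac{b}{4}\int_0^T|u|^4\,dx$, are nonnegative; minimizing is therefore just a matter of making both as small as possible simultaneously, which is far easier than in the focusing case treated in Proposition~\ref{prop:focusing-P_T}. No appeal to Lemma~\ref{th:2-2} or to any spectral/Sturm--Liouville consideration should be needed.

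First I would apply Jensen's inequality (equivalently Cauchy--Schwarz) on $[0,T]$ to lower bound the quartic term: for any $u\in H^1_{\mathrm{loc}}\cap P_T$ with $\mathcal{M}(u)=m$,
\[
\int_0^T|u|^4\,dx \;\geq\; \frac{1}{T}\Bigl(\int_0^T|u|^2\,dx\Bigr)^{\!2} \;=\; \frac{4m^2}{T},
\]
with equality if and only if $|u|$ is a.e.\ constant. Combining this with $\int_0^T|u_x|^2\,dx\geq 0$ yields $\mathcal{E}(u)\geq -bm^2/T$. Second, I would check that the candidate $u_{\min}\equiv\sqrt{2m/T}$ saturates this lower bound and satisfies both constraints $\mathcal{M}(u_{\min})=m$ and $\mathcal{P}(u_{\min})=0$, so it is a minimizer of \eqref{eq:min-prob-m} and a fortiori of the more constrained problem \eqref{eq:min-prob-m-p}.

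For uniqueness, any minimizer $u$ of \eqref{eq:min-prob-m} must force equality in both dropped inequalities. From $\int_0^T|u_x|^2\,dx=0$ we obtain $u\equiv c$ for some constant $c\in\mathbb{C}$; the mass constraint then fixes $|c|^2=2m/T$, hence $u=e^{i\alpha}\sqrt{2m/T}$ for some $\alpha\in\R$, which is exactly uniqueness up to phase shift. Since this minimizer has $\mathcal{P}(u)=0$, it is automatically feasible for \eqref{eq:min-prob-m-p}, and since \eqref{eq:min-prob-m-p} is a restriction of \eqref{eq:min-prob-m}, the two problems share the same unique (up to phase) minimizer. I do not anticipate any real obstacle here, since the defocusing structure reduces the variational problem to an elementary convexity estimate on a bounded interval combined with the trivial equality case of the Dirichlet energy.
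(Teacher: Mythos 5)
Your proof is correct and is essentially the paper's own argument: the paper likewise combines the nonnegativity of the Dirichlet term with the H\"older inequality $\norm{f}_{L^2}\leq T^{1/4}\norm{f}_{L^4}$ (your Jensen/Cauchy--Schwarz step in disguise), whose equality case singles out constant-modulus functions, and the vanishing of $\int_0^T|u_x|^2\,dx$ then forces the minimizer to be a genuine constant of the prescribed mass. The only cosmetic difference is that you phrase it as a sharp lower bound $\mathcal E(u)\geq -bm^2/T$ plus its equality case, whereas the paper compares $\mathcal E(\varphi_{m,0})$ directly with $\mathcal E(v)$ for an arbitrary competitor $v$.
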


\begin{proof}
  This is a simple consequence of the fact that functions with constant modulus are
  the optimizers of the injection $L^4(0,T)\hookrightarrow
  L^2(0,T)$. More precisely,  for every $f\in L^4(0,T)$ we have by H\"older's inequality,
  \[
  \norm{f}_{L^2}\leq T^{1/4}\norm{f}_{L^4},
  \]
  with equality if and only if $|f|$ is constant. Let $\varphi_{m,0}$
  be the constant function $\varphi_{m,0}\equiv \sqrt{\frac{2m}{T}}$.
  For any $v\in H^1_{\mathrm{loc}}\cap P_T$ such that $\mathcal M(v)=m$ and
  $v\not\equiv e^{i\theta}\varphi_{m,0}$ ($\theta\in\R$) we have 
  \begin{gather*}
    0=\norm{\partial_x \varphi_{m,0}}_{L^2}^2< \norm{\partial_x v}_{L^2}^2,\\
    \norm{\varphi_{m,0}}_{L^4}^4=4T^{-1}\mathcal M^2(\varphi_{m,0})=4T^{-1}\mathcal M^2(v)\leq \norm{v}_{L^4}^4.
  \end{gather*}
  As a consequence, $\mathcal E(\varphi_{m,0})<\mathcal E(v)$ and this proves the proposition.
\end{proof}

\subsection{Minimization Among Half-Anti-Periodic Functions}

\subsubsection{The Focusing Case in $A_{T/2}$}

\begin{proposition}
\label{prop:focusing-A}
  Assume $b>0$.
  For all $m>0$,  the minimization problems~\eqref{eq:min-prob-m-A}
  and~\eqref{eq:min-prob-m-p-A} in $A_{T/2}$
  satisfy the following properties.
  \begin{itemize}
    \item[(i)] The minimizers for~\eqref{eq:min-prob-m-A}  and
     ~\eqref{eq:min-prob-m-p-A} are the same.  
  \item[(ii)]   There exists a unique (up to translations and phase shift)
    minimizer of~\eqref{eq:min-prob-m-A}. It is the
    rescaled function
    $\cn_{\alpha,\beta,k}=\frac1\alpha\cn\left(\frac\cdot\beta,k\right)$
    where the parameters $\alpha$, $\beta$ and $k$ are uniquely
    determined. Its fundamental period is $T$. The map from $m \in (0,\infty)$ to $k \in (0,1)$ is one-to-one, onto and increasing. 
\item[(iii)] In particular, given $k\in(0,1)$, $\cn=\cn(\cdot,k)$, if $b=2k^2$,
  $T=4K(k)$, and
  $m=\mathcal M(\cn)=2(E-(1-k^2)K)/k^2$,  then the unique (up to translations and  phase shift) minimizer of
 ~\eqref{eq:min-prob-m-A} is $\cn$.
  \end{itemize}
\end{proposition}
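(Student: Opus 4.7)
The plan is to follow the same overall architecture as the proof of Proposition~\ref{prop:focusing-P_T}: establish existence of a minimizer, promote it to a real-valued function (up to phase and translation), invoke the classification of Lemma~\ref{th:2-1} to identify it as an elliptic function, and use a Sturm--Liouville/Morse-index argument to pin down the fundamental period. The Gagliardo--Nirenberg lower bound and the weak-then-strong $H^1$ compactness arguments of the periodic case transfer verbatim, and a half-anti-periodic test function such as $c\sin(2\pi x/T) \in A_{T/2}$, with suitable amplitude, shows the infimum is strictly negative. A minimizer $u_\infty \in A_{T/2}$ thus exists.

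The main obstacle, new in this setting, is to show that $u_\infty$ can be taken real-valued up to phase and translation: one cannot simply replace $u_\infty$ by $|u_\infty|$, since that lies in $P_{T/2}$ rather than in $A_{T/2}$. Here I would invoke the Fourier-coefficient rearrangement argument announced after Theorem~\ref{thm:1} (Lemma~\ref{lem:stephen_trick}): any $u \in A_{T/2}$ admits a Fourier expansion on the odd frequencies, $u(x) = \sum_{n\in\mathbb{Z}} c_n e^{i(2n+1)2\pi x/T}$, and a suitable symmetrization of the coefficients $c_n$ produces a real-valued function in $A_{T/2}$ with the same mass and the same $\|u_x\|_{L^2}^2$ (by Parseval) and non-decreased $\|u\|_{L^4}^4$. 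The latter uses the observation that $\|u\|_{L^4}^4 = \||u|^2\|_{L^2}^2$, while the Fourier coefficients of $|u|^2$ are bilinear sums $\sum_{j-k=n} c_j\overline{c_k}$, whose moduli can only increase under $c_k \mapsto |c_k|$ by the triangle inequality. The rearranged function is therefore still a minimizer.

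Once $u_\infty$ is real, it satisfies the Euler--Lagrange equation $u_\infty'' + bu_\infty^3 + au_\infty = 0$ with some Lagrange multiplier $a \in \R$, and half-anti-periodicity forces it to change sign (since $u_\infty(T/2) = -u_\infty(0)$). This rules out both the constant and $\dn$-type branches and, by Lemma~\ref{th:2-1}(b), places $u_\infty$ in the $\cn$ branch: $u_\infty = \cn_{\alpha,\beta,k}$ with $b\beta^2 = 2k^2\alpha^2$. A Morse-index argument analogous to that of Proposition~\ref{prop:focusing-P_T} then forces the fundamental period to be exactly $T$: if it were $T/n$ with $n \geq 3$ (the only values compatible with half-anti-periodicity require $n$ odd), the kernel element $\partial_x u_\infty$ would have $2n$ zeros per period $T$, yielding at least $2n-1 \geq 5$ negative eigenvalues of the constrained Hessian by Sturm--Liouville and contradicting the Morse-index bound of one. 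Hence $n = 1$, i.e.\ $\beta = T/(4K(k))$. Substituting into $\mathcal{M}(\cn_{\alpha,\beta,k})$ via \eqref{eq:2.2} and $b\beta^2 = 2k^2\alpha^2$ expresses the mass as an explicit function of $k$ alone, which I expect to be a strictly increasing bijection from $(0,1)$ onto $(0,\infty)$---verifiable by differentiation using the formulas for $\partial_k E$ and $\partial_k K$ recalled in Section~\ref{ssec:elliptic}. This yields (ii), and parameter matching gives (iii). Finally, a real-valued minimizer has momentum density $\Im(u_{\infty,x}\bar u_\infty) \equiv 0$, so the momentum constraint in~\eqref{eq:min-prob-m-p-A} is automatic and (i) follows.
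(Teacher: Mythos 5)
Your overall architecture is exactly the paper's: existence by direct methods, realness via the Fourier rearrangement of Lemma~\ref{lem:stephen_trick}, identification via Lemma~\ref{th:2-1}(b), and a Sturm--Liouville/Morse-index argument to fix the fundamental period. However, the Sturm--Liouville step as you state it does not close. You count $2n$ zeros of $\partial_x u_\infty$ in $[0,T)$ and deduce ``at least $2n-1\geq 5$ negative eigenvalues of the constrained Hessian,'' claiming this contradicts a Morse-index bound of one. But the minimization is over $A_{T/2}$ only, so the second variation inequality $\ps{L_+h}{h}\geq 0$ holds only for admissible perturbations $h\in A_{T/2}$ (orthogonal to $u_\infty$); it gives \emph{no} control on the spectrum of $L_+$ restricted to the complementary subspace $P_{T/2}$ inside $P_T=P_{T/2}\oplus A_{T/2}$. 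The count of $2n-1$ negative eigenvalues you invoke is a full-space ($P_T$) count, and a full-space Morse-index bound of one is actually \emph{false} for the genuine minimizer: already for $n=1$, $L_+^{\cn}$ has two negative eigenvalues in $P_{4K}$ (eigenfunctions $\chi_-\in P_{2K}^+$ and $\sn_x\in A_{2K}^+$), of which only one lies in $A_{2K}$. To get a contradiction for $n\geq 3$ you must count negative eigenvalues whose eigenfunctions lie in $A_{T/2}$; this is what the paper does, obtaining at least $n-1$ such eigenvalues from the zero-counting of the $T/2$-antiperiodic Sturm--Liouville problem ($\partial_x u_\infty$ has $n$ zeros in $[0,T/2)$), which exceeds one precisely when the odd integer $n$ is at least $3$.

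Two smaller points. First, your claim that a rescaled $\sin(2\pi x/T)$ test function shows the infimum is strictly negative is false for small $m$: every $u\in A_{T/2}$ has zero mean, so Poincar\'e--Wirtinger forces $\mathcal E(u)>0$ on the whole constraint set when $m$ is small. Fortunately this negativity is never needed here -- unlike the $\dn$ case, Lemma~\ref{th:2-1}(b) imposes no sign condition on the Lagrange multiplier $a$, and the paper's proof makes no such claim. Second, your description of the rearrangement as ``$c_k\mapsto|c_k|$'' would not produce a real-valued function (realness requires $\tilde v_{-j}=\overline{\tilde v_j}$); the lemma's actual symmetrization is $\tilde v_j=\sqrt{(|v_j|^2+|v_{-j}|^2)/2}$, with the $L^4$ monotonicity coming from Cauchy--Schwarz on the pairs $(v_j,\bar v_{-j})$. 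Since you invoke the lemma as a black box this is only an imprecision, not a gap.
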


Before proving Proposition~\ref{prop:focusing-A}, we make the
following crucial observation.

\begin{lemma}\label{lem:stephen_trick}
  Let $v\in H^1_{\mathrm{loc}}\cap A_{T/2}$. Then there exists $\tilde v\in H^1_{\mathrm{loc}}\cap A_{T/2}$
  such that 
\[
\tilde v(x) \in\R,\quad \norm{\tilde v}_{L^2}= \norm{v}_{L^2}, \quad
\norm{\partial_x\tilde v}_{L^2}= \norm{\partial_x v}_{L^2},\quad
\norm{\tilde v}_{L^4}\geq  \norm{v}_{L^4}.
\]
\end{lemma}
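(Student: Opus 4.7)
The plan is to construct $\tilde v$ as a symmetric rearrangement of the Fourier coefficients of $v$. Since $v \in A_{T/2}$ is $T$-periodic with $v(x+T/2)=-v(x)$, the Fourier expansion takes the form $v(x) = \sum_{n \text{ odd}} c_n e^{i\omega n x}$, where $\omega = 2\pi/T$. I define
\[
s_n := \sqrt{\tfrac{|c_n|^2 + |c_{-n}|^2}{2}} \quad (n > 0 \text{ odd}),\qquad \tilde v(x) := 2\sum_{n>0,\, \text{odd}} s_n \cos(\omega n x),
\]
so that $\tilde v$ is real with Fourier coefficients $\tilde c_{\pm n} = s_n$, and $\tilde v \in A_{T/2}$ since $\cos(\omega n(x+T/2))=-\cos(\omega n x)$ for $n$ odd. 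The identity $\sum_n |\tilde c_n|^2 = \sum_{n>0}(|c_n|^2+|c_{-n}|^2) = \sum_n |c_n|^2$, together with its weighted version using $(\omega n)^2$ (which is symmetric under $n \mapsto -n$), delivers both $\norm{\tilde v}_{L^2}=\norm{v}_{L^2}$ and $\norm{\partial_x\tilde v}_{L^2}=\norm{\partial_x v}_{L^2}$ by Parseval.

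The main step is the $L^4$ inequality, and I would obtain it as follows. Using $\norm{u}_{L^4}^4 = \norm{|u|^2}_{L^2}^2$ and applying Parseval to the Fourier series of $|v|^2$ and $\tilde v^2$ (both $T/2$-periodic),
\[
\norm{v}_{L^4}^4 = T\sum_\ell |G_\ell|^2,\qquad \norm{\tilde v}_{L^4}^4 = T\sum_\ell \tilde G_\ell^2,
\]
where $G_\ell := \sum_n c_n \overline{c_{n-\ell}}$ and $\tilde G_\ell := \sum_n s_{|n|} s_{|n-\ell|} \geq 0$. It suffices to prove the pointwise bound $\tilde G_\ell \geq |G_\ell|$ for each even $\ell$. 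The triangle inequality gives $|G_\ell| \leq \sum_n |c_n||c_{n-\ell}|$, so I reduce to showing $\tilde G_\ell \geq \sum_n |c_n||c_{n-\ell}|$. For each fixed $n$, the two-dimensional Cauchy--Schwarz inequality applied to $(|c_n|,|c_{-n}|)$ and $(|c_{n-\ell}|,|c_{-(n-\ell)}|)$ gives
\[
2\, s_{|n|}\, s_{|n-\ell|} \geq |c_n||c_{n-\ell}| + |c_{-n}||c_{-(n-\ell)}|.
\]
Summing over odd $n$, the substitution $m := \ell - n$ converts $\sum_n |c_{-n}||c_{-(n-\ell)}|$ into $\sum_m |c_m||c_{m-\ell}|$, so the two halves coincide and $\tilde G_\ell \geq \sum_n |c_n||c_{n-\ell}| \geq |G_\ell|$. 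Squaring and summing over $\ell$ yields the desired $L^4$ bound.

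The key design choice, and the source of the main subtlety, is the averaging formula $s_n = \sqrt{(|c_n|^2+|c_{-n}|^2)/2}$: it is precisely the unique positive real value making $\tilde c_n = \tilde c_{-n}$ (reality) while preserving $|\tilde c_n|^2+|\tilde c_{-n}|^2 = |c_n|^2+|c_{-n}|^2$ ($L^2$- and $H^1$-preservation). This symmetry in the $\pm n$ pair is what allows the Cauchy--Schwarz step to fold the "cross" term $|c_{-n}||c_{-(n-\ell)}|$ into $|c_n||c_{n-\ell}|$ after a single reindexing, exactly reconstructing the full triangle-inequality bound on $|G_\ell|$; any other choice of $s_n$ would either break reality or destroy one of the norm equalities.
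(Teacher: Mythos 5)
Your construction of $\tilde v$ (averaging the $\pm n$ Fourier coefficients via $s_n=\sqrt{(|c_n|^2+|c_{-n}|^2)/2}$) and your $L^4$ step (Parseval for $|v|^2$, then bounding each convolution coefficient by a two-dimensional Cauchy--Schwarz on the $\pm n$ pairs, with the reindexing $m=\ell-n$ folding the two halves together) are exactly the paper's argument, just organized slightly differently. The proof is correct and essentially identical to the one in the paper.
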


\begin{proof}[Proof of Lemma~\ref{lem:stephen_trick}]
The proof relies on a combinatorial argument. 
Since $v \in H^1_{\mathrm{loc}} \cap A_{T/2}$, its Fourier series expansion contains 
only terms indexed by \emph{odd} integers:
\[
v(x)=\sum_{\substack{j\in\mathbb
      Z\\j\text{ odd }}} v_je^{ij\frac{2\pi}{T}x}.
\]
We define $\tilde v$ by its Fourier series expansion
\[
\tilde v(x)=\sum_{\substack{j\in\mathbb
      Z\\j\text{ odd }}} \tilde v_je^{ij\frac{2\pi}{T}x},\quad \tilde v_j:=\sqrt{\frac{|v_j|^2+|v_{-j}|^2}{2}}.
\]
It is clear that $\tilde{v}(x) \in \R$ for all $x\in\R$, and by
Plancherel formula,
\[
\norm{\tilde v}_{L^2}= \norm{v}_{L^2}, \quad
\norm{\partial_x\tilde v}_{L^2}= \norm{\partial_x v}_{L^2},
\]
so all we have to prove is that 
$\norm{\tilde v}_{L^4}\geq  \norm{v}_{L^4}$.
We have 
\[
|v(x)|^2=\sum_{\substack{j\in \mathbb
      Z\\j\text{ odd }}} |v_j|^2+\sum_{\substack{n\in2\mathbb
      N\\n\geq 2}} w_ne^{in\frac{2\pi}{T}x}+\bar w_ne^{-in\frac{2\pi}{T}x},
\]
where we have defined 
\[
w_n=\sum_{\substack{j>k,
      j+k=n\\j,k\text{ odd}}} v_j\bar v_{-k}+v_{k}\bar v_{-j}.
\]
Using the fact that for $n\in\mathbb N$, $n\neq 0$, the term
$e^{in\frac{2\pi}{T}x}$ integrates to $0$ due to periodicity,
\[
\int_{0}^T e^{in\frac{2\pi}{T}x}dx=0,
\]
we compute 
\[
  \frac{1}{T} \int_0^T|v|^4dx=\bigg(\sum_{\substack{j\in \mathbb
      Z\\j\text{ odd }}} |v_j|^2\bigg)^2+2\sum_{\substack{n\in2\mathbb
      N\\n\geq 2}} |w_n|^2.
\]
The first part is just 
\[
\bigg(\sum_{\substack{j\in \mathbb
      Z\\j\text{ odd }}}
  |v_j|^2\bigg)^2= \frac{1}{T^2} \norm{v}_{L^2}^4 = \frac{1}{T^2} \norm{\tilde v}_{L^2}^4.
\]
For the second part, we observe that
\begin{equation}
\label{eq:w_n}
w_n= \sum_{\substack{j>k,
      j+k=n\\j,k\text{ odd}}} \binom{v_j}{ \bar
  v_{-j}}\cdot\binom{v_{-k}}{\bar v_k}, 
\end{equation}
where the $\cdot$ denotes the complex vector scalar
product. Therefore, 
\[
\begin{split}
|w_n| &\leq \sum_{\substack{j>k,
      j+k=n\\j,k\text{ odd}}}  \left|\binom{v_j}{ \bar
  v_{-j}}\right|\left|\binom{v_{-k}}{\bar v_k}\right|
= \sum_{\substack{j>k,
      j+k=n\\j,k\text{ odd}}}  \sqrt{2 \tilde v_j^2}\sqrt{2 \tilde
    v_k^2} \\
&=2 \sum_{\substack{j>k,
      j+k=n\\j,k\text{ odd}}} \tilde v_j \tilde v_k=\tilde w_n, 
\end{split}
\]
where by $\tilde w_n$ we denote the quantity defined similarly as in
\eqref{eq:w_n} for $(\tilde v_j)$. As a consequence, 
\[
\norm{v}_{L^4}\leq \norm{\tilde v}_{L^4}
\]
and this finishes the proof of Lemma~\ref{lem:stephen_trick}.
\end{proof}

\begin{proof}[Proof of Proposition~\ref{prop:focusing-A}]
All functions are considered in $A_{T/2}$.
Consider a minimizing sequence $(u_n)$ for
\eqref{eq:min-prob-m-p-A}. By Lemma~\ref{lem:stephen_trick}, the
minimizing sequence can be chosen such that $u_n(x)\in\R$ for all
$x\in\R$ and this readily implies the equivalence between
\eqref{eq:min-prob-m-p-A} and~\eqref{eq:min-prob-m-A}, which is (i). 

Using the same arguments as in the proof of Proposition~\ref{prop:focusing-P_T}, we
infer that the minimizing sequence converges strongly in
$H^1_{\mathrm{loc}} \cap A_{T/2}$ to $u_\infty\in H^1_{\mathrm{loc}}\cap A_{T/2}$ verifying
for some ${a}\in\R$ the Euler-Lagrange equation
\[
\partial_{xx}u_\infty+bu_{\infty}^3+{a} u_{\infty}=0.
\]
Then,  since $u_\infty$ is real and in $A_{T/2}$, we may assume $\max u = u(0)>0$ and, by Lemma~\ref{th:2-1} (b), there exists a set of parameters
$\alpha,\beta\in(0,\infty)$, $ k\in(0,1)$ such that 
\[
u_\infty(x)=\frac1\alpha\cn\left(\frac x\beta, k\right),
\]
and the parameters $\alpha,\beta,k $  are determined by $T$, ${a}$, $b$ and $\max u$, with 
$2 k^2 \alpha^2 = b\beta^2$.

There exists an odd, positive integer $n$ so that the fundamental period of 
$u_\infty$ is $ {4K(k)}{\beta} = T/n$.
Since $u_\infty$ is a minimizer for~\eqref{eq:min-prob-m-A}, the operator 
\[
-\partial_{xx}- {a} - 3b u_\infty ^2 
\]
can have at most one negative eigenvalue in $L^2_{\mathrm{loc}} \cap A_{T/2}$. The function
$\partial_x u_\infty$ is in its kernel and has $2n$
zeros in $[0,T)$. By Sturm-Liouville theory, there are at least $n-1$ eigenvalues (with
eigenfunctions in $A_{T/2}$) below $0$. 
Hence, since $n$ is odd, $n=1$ and $ {4K(k)}{\beta} =T$.

The mass verifies, using $2 k^2 \alpha^2 = b\beta^2$ and ~\eqref{eq:2.1},
\[
m=\frac12\int_0^T
|\cn_{\alpha,\beta,k}(x)|^2dx=\frac{\beta}{\alpha^2}\frac12\int_0^{4K(k)}
|\cn(y,k)|^2dy = \frac 4{\beta b} (E(k) - (1-k^2)K(k)).
\]
Using $ {4K(k)}{\beta} =T$,
\begin{equation}\label{eq:mk-A}
m = M(k):=\frac {16}{bT} K(k)  (E(k) - (1-k^2)K(k)).
\end{equation}
Note all factors of $M(k)$ are positive, $\frac{\partial}{\partial k} K(k)>0$ and
\[
\frac{\partial}{\partial k} (E-(1-k^2)K )= \frac {E-K}k - \frac {E-(1-k^2)K}k + 2kK = kK >0.
\]
Thus~\eqref{eq:mk-A} defines $m$ as a strictly increasing function of $k\in (0,1)$ with range $(0,\infty)$ and hence has an inverse function.  For fixed $T,b,m$, the value $k \in (0,1)$ is uniquely determined by ~\eqref{eq:mk-A}.
We also have $\beta=\frac T{4K(k)}$ and $\alpha^2 = \frac {b\beta^2 }{2 k^2}$.
The above shows (ii).

In the case we are given  $k\in(0,1)$, 
$T=4K(k)$, $b=2 k^2$ and $m=\mathcal M(\cn(\cdot,k))$, we want to show that $u_\infty (x)= \cn(x,k)$. In this case, by Lemma~\ref{th:2-1} (b),
 $u_\infty = \cn_{\alpha,\beta,s}$ for some $\alpha,\beta>0$ and $s \in (0,1)$, up to translation and phase.
By the same Sturm-Liouville
theory argument, 
the fundamental period of $u_\infty$ is $T= {4K(s)}{\beta} $. The same calculation leading to~\eqref{eq:mk-A} shows
\[
m = M(s).
\]
Thus $M(s) = M(k)$. By the monotonicity of $M(k)$ in $k$, we have $k=s$. Thus $\alpha=\beta=1$ and  $u_\infty (x)= \cn(x,k)$. 
This shows (iii) and 
concludes the proof.
\end{proof}

\subsubsection{The Defocusing Case in $A_{T/2}$}

\begin{proposition}\label{prop:defocusing-A}
Assume $b<0$.
There exists a unique (up to phase shift and complex conjugate) minimizer
for ~\eqref{eq:min-prob-m-A}. It 
is the plane wave $u_{\min}\equiv \sqrt{\frac{2m}{T}} e^{\frac{2i\pi x}{T}}$.
\end{proposition}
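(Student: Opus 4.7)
The plan is to produce a sharp lower bound for $\mathcal{E}$ on the constraint set by noting that, for $b<0$,
\[
\mathcal{E}(u) = \tfrac{1}{2}\|u_x\|_{L^2}^2 + \tfrac{|b|}{4}\|u\|_{L^4}^4
\]
is a sum of two nonnegative terms, each of which can be bounded from below independently. Verifying that $u_{\min}$ saturates both bounds simultaneously will establish that it is a minimizer, and analyzing the equality cases will yield uniqueness. This strategy bypasses any ODE-based identification or Sturm--Liouville argument and, in particular, does not even require a separate existence step.

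For the quartic term, H\"older's inequality gives $\|u\|_{L^2}^2 \leq T^{1/2}\|u\|_{L^4}^2$, so $\|u\|_{L^4}^4 \geq 4m^2/T$, with equality if and only if $|u|$ is constant (the exact ingredient already used in Proposition~\ref{prop:defocusing-P}). For the kinetic term, I will use that every $u\in H^1_{\mathrm{loc}} \cap A_{T/2}$ admits a Fourier expansion
\[
u(x) = \sum_{j\text{ odd}} c_j\, e^{ij 2\pi x/T},
\]
so that by Parseval, together with $j^2\geq 1$ for every odd $j$,
\[
\|u_x\|_{L^2}^2 = \Bigl(\frac{2\pi}{T}\Bigr)^2 T \sum_{j\text{ odd}} j^2|c_j|^2 \geq \Bigl(\frac{2\pi}{T}\Bigr)^2 \|u\|_{L^2}^2 = \frac{8\pi^2 m}{T^2},
\]
with equality precisely when $c_j = 0$ for all $|j|\geq 3$. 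Summing the two bounds yields $\mathcal{E}(u)\geq 4\pi^2 m/T^2 + |b|m^2/T$, and a direct computation confirms that $u_{\min}$ saturates both inequalities at once, so this is in fact the minimum.

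For uniqueness, any minimizer must therefore be of the form $u = c_1 e^{i 2\pi x/T} + c_{-1} e^{-i 2\pi x/T}$ with $|u|$ constant. Expanding,
\[
|u|^2 = |c_1|^2 + |c_{-1}|^2 + 2\mathrm{Re}\bigl(c_1\bar c_{-1}\, e^{i 4\pi x/T}\bigr),
\]
is $x$-independent if and only if $c_1\bar c_{-1} = 0$, so either $c_1 = 0$ or $c_{-1}=0$. The mass constraint $T(|c_1|^2 + |c_{-1}|^2) = 2m$ then fixes the remaining coefficient up to a unit complex factor, producing exactly the two orbits $u_{\min}$ and $\overline{u_{\min}}$, each up to multiplication by a phase. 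I expect no serious obstacle: both inequalities are classical and the equality analysis reduces to the elementary remark that two-mode functions with constant modulus must be pure monomials.
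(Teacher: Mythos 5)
Your proof is correct and follows essentially the same route as the paper's: H\"older's inequality handles the quartic term, and the kinetic term is bounded below by the spectral gap of $-\partial_{xx}$ on $A_{T/2}$ (the paper phrases this as Poincar\'e--Wirtinger for zero-mean functions, while you derive the same inequality directly from the odd-mode Fourier expansion). If anything, your equality-case analysis is the more careful one: the paper asserts a strict kinetic inequality for every competitor not a phase multiple of the plane wave, which actually fails for two-mode combinations $c_1 e^{2i\pi x/T}+c_{-1}e^{-2i\pi x/T}$, and your observation that such a function has constant modulus only if $c_1\bar c_{-1}=0$ is precisely what closes that small gap and produces the two orbits $u_{\min}$ and $\overline{u_{\min}}$.
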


\begin{proof}
  Denote the supposed minimizer by
  $w(x)=\sqrt{\frac{2m}{T}} e^{\pm\frac{2i\pi x}{T}}$. Let  $v\in
  H^1_{\mathrm{loc}}\cap A_{2K}$ such that $\mathcal M(v)=m$ and
  $v\not\equiv e^{i\theta}w$ ($\theta\in\R$).
  As in the proof of Proposition~\ref{prop:defocusing-P}, we have
  \[
    \norm{w}_{L^4}^4=4T^{-1}\mathcal M^2(w)=4T^{-1}\mathcal M^2(v)\leq \norm{v}_{L^4}^4.
    \]
   Since $v\in A_{2K}$, $v$ must have $0$ mean value. Recall that in
   that case $v$ verifies the
   Poincar\'e-Wirtinger inequality
   \[
   \norm{v}_{L^2}\leq \frac {T}{2\pi} \norm{v'}_{L^2},
   \]
   and that the optimizers of the Poincar\'e-Wirtinger inequality are
   of the form $C e^{\pm\frac{2i\pi}{T}x}$, $C\in\mathbb C$.
   This implies that
  \[
    \norm{\partial_xw}_{L^2}^2=\frac{8\pi^2}{T^2}\mathcal M(w)=\frac{8\pi^2}{T^2}\mathcal M(v)< \norm{\partial_xv}_{L^2}^2.
    \]
As a consequence, $\mathcal E(w)<\mathcal E(v)$ and this proves the lemma.
\end{proof}

As far as~\eqref{eq:min-prob-m-p-A} is concerned, we make the following conjecture

\begin{conjecture}\label{conj:minimizer-2}
Assume $b<0$.
The unique (up to translations and phase shift) minimizer of ~\eqref{eq:min-prob-m-p-A}
is the rescaled function
$\sn_{\alpha,\beta,k}=\frac1\alpha\sn\left(\frac\cdot\beta,k\right)$
where the parameters $\alpha$, $\beta$ and $k$ are uniquely determined.

In particular, given $k\in(0,1)$, $\sn=\sn(\cdot,k)$, if $b=-2k^2$,
  $T=4K(k)$, and
  $m=\mathcal M(\sn)$,  then the unique (up translations and to phase shift) minimizer of
 ~\eqref{eq:min-prob-m-p-A} is $\sn$.
\end{conjecture}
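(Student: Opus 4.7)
My strategy is to mimic the proofs of Propositions~\ref{prop:focusing-P_T}--\ref{prop:focusing-A}, proceeding in three steps: (i) produce a minimizer by the direct method; (ii) show it is real-valued up to a constant phase; (iii) apply Lemma~\ref{th:2-2} and a Sturm--Liouville count to identify it as a rescaled $\sn$ of fundamental period $T$. Step (i) is routine: for $b<0$ the energy $\mathcal E$ is coercive on $\{\mathcal M=m\}$, a minimizing sequence is bounded in $H^1_{\mathrm{loc}}\cap A_{T/2}$, and Rellich-type compactness on the torus combined with weak continuity of the constraints yields a minimizer $u_\infty\in H^1_{\mathrm{loc}}\cap A_{T/2}$ satisfying the Euler--Lagrange equation
\[
u_\infty''+au_\infty+b|u_\infty|^2u_\infty+ic\,u_\infty'=0
\]
for real Lagrange multipliers $a,c\in\R$, where $c$ arises from the momentum constraint.

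Assuming $c=0$, step (ii) proceeds exactly as in the remark following~\eqref{eq:min-prob-m-p}: the equation reduces to~\eqref{eq:snls-1}, the momentum density $\Im(\bar u_\infty u_\infty')$ is then a first integral of that autonomous ODE with zero mean (by $\mathcal P(u_\infty)=0$), hence vanishes pointwise, and one concludes $u_\infty=e^{i\theta_0}\rho$ for some real $\rho$ and $\theta_0\in\R$. Lemma~\ref{th:2-2} then identifies $\rho$ as $\frac{1}{\alpha}\sn(K(k)+\cdot/\beta,k)$ for some $(\alpha,\beta,k)$. For step (iii), the constrained-minimizer property bounds the Morse index of the Hessian $-\partial_{xx}-a-3bu_\infty^2$ in $L^2_{\mathrm{loc}}\cap A_{T/2}$ by two (one slot for the mass constraint, one for the momentum); since $\partial_x u_\infty$ lies in its kernel with $2n$ zeros per $T$ when the fundamental period is $T/n$, Sturm--Liouville theory forces $n=1$. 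A mass-versus-$k$ monotonicity relation analogous to~\eqref{eq:mk-A}, checked using the formulas for $\partial_k E$ and $\partial_k K$ from Section~\ref{ssec:elliptic}, then uniquely determines $(\alpha,\beta,k)$ in terms of $(T,b,m)$.

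The principal obstacle is to prove $c=0$. The Fourier-coefficient rearrangement of Lemma~\ref{lem:stephen_trick} produces a real-valued $\tilde v\in A_{T/2}$ with $\norm{\tilde v}_{L^4}\geq\norm{v}_{L^4}$, an inequality that \emph{raises} the defocusing energy and therefore goes the wrong way. A natural alternative is to exploit the reflection symmetry $R:u(x)\mapsto u(-x)$, which preserves $\mathcal E$, $\mathcal M$ and $A_{T/2}$ while reversing the sign of $\mathcal P$, hence preserving the constraint set: if one could show that the minimizer is unique up to translation and global phase, one could arrange $u_\infty(-x)=e^{i\theta}u_\infty(x+s)$ and then compare the Euler--Lagrange equation at $x$ and $-x$ to deduce $c\,u_\infty'\equiv 0$, whence $c=0$. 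However, the strict convexity of $\mathcal E$ (which does hold when $b<0$) is defeated by the indefiniteness of the $\mathcal P=0$ constraint and by the phase-rotation invariance of the problem, so uniqueness---or, equivalently, the existence of an even minimizer---seems to require a genuinely new idea: perhaps a defocusing analogue of Lemma~\ref{lem:stephen_trick}, a symmetrization adapted to the anti-periodic setting, or a concentration-compactness argument on the torus. This is precisely why the statement is left as a conjecture, supported by the numerical experiments of Section~\ref{sec:num-exp}.
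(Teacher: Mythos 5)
This statement is a \emph{conjecture} in the paper, not a theorem: the authors offer no proof, only the numerical evidence of Observation~\ref{obs:conjecture}, and they state explicitly that ``the main difficulty in proving the conjecture is to show that the minimizer is real up to a phase.'' Your proposal does not close that gap either --- but you have diagnosed it correctly, and your diagnosis coincides exactly with the authors' own. The outer scaffolding you describe (direct method for existence; Lemma~\ref{th:2-2} plus the Sturm--Liouville zero count to pin the fundamental period at $T$; monotonicity of $m=\frac{16}{|b|T}K(K-E)$ in $k$ to fix the parameters) is sound and is precisely what the paper carries out in Proposition~\ref{prop:defocusing-A-}, where the restriction to the \emph{odd} subspace $A_{T/2}^-$ makes realness automatic via $u\mapsto|u|$ on $H^1_0([0,T/2])$. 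Your observation that Lemma~\ref{lem:stephen_trick} is useless here because $\norm{\tilde v}_{L^4}\geq\norm{v}_{L^4}$ raises rather than lowers the defocusing energy is exactly right, and is the reason the focusing argument of Proposition~\ref{prop:focusing-A} does not transfer.

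Two smaller points. First, you are right that the Euler--Lagrange equation for the two-constraint problem carries the extra term $ic\,u'$, and that $c$ cannot in general be gauged away by $u\mapsto e^{i\gamma x}u$ without leaving $A_{T/2}$ (one needs $\gamma\in\frac{4\pi}{T}\mathbb{Z}$); so proving $c=0$ is a genuine additional step, not a formality, and your proposed route via uniqueness or an even representative is circular as you acknowledge. Second, a minor quibble in step (iii): once realness is established, the gradients $\mathcal M'(u_\infty)=u_\infty$ and $\mathcal P'(u_\infty)$ point respectively in the real and imaginary directions, so the momentum constraint costs a negative direction of $L_-$, not of $L_+$; the Morse index bound on $L_+=-\partial_{xx}-a-3bu_\infty^2$ in $A_{T/2}$ is therefore $1$, not $2$. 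This only strengthens the Sturm--Liouville conclusion $n=1$, so nothing breaks. In summary: the proposal is an honest and accurate account of why the statement remains open, but it is not a proof, and no proof exists in the paper to compare it against.
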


This conjecture is supported by numerical evidence, see Observation
\ref{obs:conjecture}.
The main difficulty in proving the conjecture is to show that the minimizer is real up to a phase.

\subsubsection{The Defocusing Case in $A_{T/2}^-$}

In light of our uncertainty about whether $\sn$ solves~\eqref{eq:min-prob-m-p-A},
let us settle for the simple observation that it is the energy minimizer among
\emph{odd}, half-anti-periodic functions: 
\begin{proposition}
\label{prop:defocusing-A-} 
Assume $b<0$. The unique (up to phase shift) minimizer of the problem
\begin{equation} \label{eq:min-prob-m-A-}
\min
\left\{
\mathcal E(u) \; | \; \mathcal M(u)=m ,u\in H_{\mathrm{loc}}^1\cap A_{T/2}^-
\right\}, 
\end{equation}
is the rescaled function
$\sn_{\alpha,\beta,k}=\frac1\alpha\sn\left(\frac\cdot\beta,k\right)$
where the parameters $\alpha$, $\beta$ and $k$ are uniquely determined.
Its fundamental period is $T$. The map from $m \in (0,\infty)$
  to $k \in (0,1)$ is one-to-one, onto and increasing. 

In particular, given $k\in(0,1)$, $\sn=\sn(\cdot,k)$, if $b=-2k^2$,
  $T=4K(k)$, and
  $m=\mathcal M(\sn)$,  then the unique (up to phase shift) minimizer of
 ~\eqref{eq:min-prob-m-A-} is $\sn$.
\end{proposition}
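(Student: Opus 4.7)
The plan is to follow the template of Propositions~\ref{prop:focusing-P_T}--\ref{prop:defocusing-A}, now in the smaller space $A_{T/2}^-$: establish existence of a minimizer, reduce it to a real-valued function, identify it with a rescaled snoidal via Lemma~\ref{th:2-2}, and finally force its fundamental period to be exactly $T$. What would normally be the main obstacle---showing the minimizer is real up to a phase---is essentially \emph{free} here, for the reason explained below; the only genuine work is ruling out higher multiples of the fundamental period.

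First, the standard compactness argument used in the proof of Proposition~\ref{prop:focusing-P_T} produces a minimizer $u_\infty \in H^1_{\mathrm{loc}}\cap A_{T/2}^-$ solving the Euler--Lagrange equation
$\partial_{xx}u_\infty + b|u_\infty|^2 u_\infty + a u_\infty = 0$
for some Lagrange multiplier $a \in \R$. Now oddness forces $u_\infty(0)=0$; since the momentum density $\Im(u_\infty'\bar u_\infty)$ is a constant of motion of this ODE, it vanishes identically, and the remark following \eqref{eq:min-prob-m-p} then yields that $u_\infty$ is real up to a global phase. This bypasses the Fourier-rearrangement step (Lemma~\ref{lem:stephen_trick}) that was needed for the focusing case in $A_{T/2}$, and delivers the announced reduction at no cost.

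With $u_\infty$ real, Lemma~\ref{th:2-2} (after translating to its maximum and absorbing a sign into the phase) yields $u_\infty(x) = \tfrac1\alpha \sn\bigl(K(k) + (x-x_0)/\beta, k\bigr)$, and oddness pins down $x_0$ so that $u_\infty = \sn_{\alpha,\beta,k}$, with the $a$-independent relation $b\beta^2 = -2k^2\alpha^2$. Since $\sn_{\alpha,\beta,k}$ has antiperiod $2K(k)\beta$, membership in $A_{T/2}^-$ forces $T = 4(2n+1)K(k)\beta$ for some integer $n \geq 0$. To show $n=0$ I would use a direct compression argument (in place of the Sturm--Liouville counting used for Proposition~\ref{prop:focusing-A}): for $n \geq 1$ set $v(x) := u_\infty(x/(2n+1))$. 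Because $T/(2(2n+1))$ is exactly the antiperiod of $u_\infty$, one checks that $v \in A_{T/2}^-$, with $\|v\|_{L^2} = \|u_\infty\|_{L^2}$, $\|v\|_{L^4} = \|u_\infty\|_{L^4}$, and $\|v'\|_{L^2} = \|u_\infty'\|_{L^2}/(2n+1)$; hence $\mathcal M(v)=m$ and $\mathcal E(v) < \mathcal E(u_\infty)$, contradicting minimality.

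Finally, using $\alpha^2 = |b|\beta^2/(2k^2)$, $\beta = T/(4K(k))$, and the identity $\int_0^K \sn^2 = (K-E)/k^2$ drawn from \eqref{eq:2.1}, the mass reduces to
\[
m \;=\; M(k) \;:=\; \frac{16}{|b|T}\, K(k)\bigl(K(k) - E(k)\bigr).
\]
Here $K$ is strictly increasing, and $K-E$ vanishes at $k=0$ with $\partial_k(K-E) = kE/(1-k^2) > 0$, so $M$ is a strictly increasing bijection from $(0,1)$ onto $(0,\infty)$. This uniquely determines $k$ (and hence $\alpha,\beta$) from $m$. The specific case $b = -2k^2$, $T = 4K(k)$ then reads off $\beta = \alpha = 1$, so $u_\infty = \sn(\cdot,k)$ with $m = 2(K-E)/k^2 = \mathcal M(\sn)$, completing the claim.
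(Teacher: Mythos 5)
Your proof is correct, but both of its key steps take a genuinely different route from the paper's. For real-valuedness, the paper exploits that every $u\in A_{T/2}^-$ vanishes at $0$ and $T/2$ to recast \eqref{eq:min-prob-m-A-} as a Dirichlet problem on the half-period, where $u\mapsto|u|$ is an admissible competitor; minimizers are therefore non-negative up to phase, hence real, in one stroke. You instead observe that the conserved momentum density $\Im(u_\infty'\bar u_\infty)$ of the Euler--Lagrange ODE vanishes at the zero forced by oddness, hence identically, and then invoke the remark following \eqref{eq:min-prob-m-p}; this is equally valid (the only point worth a word is that the full, unprojected Euler--Lagrange equation holds because $A_{T/2}^-$ is invariant under $u\mapsto u_{xx}+b|u|^2u$, which is the same standard fact the paper uses for its other restricted problems). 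For pinning the fundamental period to $T$, the paper falls back on the Sturm--Liouville zero-counting of Proposition~\ref{prop:focusing-A} (the kernel element $\partial_x u_\infty$ would have too many zeros if the period were $T/(2n+1)$ with $n\geq 1$), whereas your compression competitor $v(x)=u_\infty(x/(2n+1))$ --- which stays in $A_{T/2}^-$ precisely because $T/(2(2n+1))$ is the exact antiperiod, preserves the $L^2$ and $L^4$ norms by periodicity of $|u_\infty|$, and strictly shrinks $\|v'\|_{L^2}$ --- yields a purely variational contradiction using no second-variation or Morse-index information; it is more elementary and would in fact serve as a drop-in replacement for the Sturm--Liouville step in the other propositions as well. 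The closing mass computation $m=\frac{16}{|b|T}K(K-E)$ and its monotonicity, and the identification $\alpha=\beta=1$ in the special case, agree with the paper.
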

\begin{proof}
If $u \in A_{T/2}^-$, then $0 = u(0) = u(T/2)$, and since $u$ is completely
determined by its values on $[0,T/2]$, we may replace~\eqref{eq:min-prob-m-A-} by
\[
\min \left\{
\mathcal \int_0^{T/2}  \left( |u_x|^2 - \frac{b}{2} |u|^4 \right) dx 
\; \big| \; \int_0^{T/2} |u(x)|^2 dx = m, \; u\in H^1_0([0,T]) \right\},
\]
for which the map $u \mapsto |u|$ is admissible, showing that minimizers
are non-negative (up to phase), and in particular real-valued, hence 
a (rescaled) $\sn$ function by Lemma~\ref{th:2-2}. The remaining statements follow as in the proof
of Proposition~\ref{prop:focusing-A}. 
In particular,
the mass verifies, using $2 k^2 \alpha^2 = |b|\beta^2$,~\eqref{eq:2.1}, and $ {4K(k)}{\beta} =T$,
\[
\begin{split}
m &=\frac12\int_0^T
|\sn_{\alpha,\beta,k}(x)|^2dx=\frac{\beta}{\alpha^2}\frac12\int_0^{4K(k)}
|\sn(y,k)|^2dy 
\\
&= \frac 4{\beta |b|} (K(k) - E(k))
=\frac {16}{|b|T} K(k)  (K(k) - E(k)),
\end{split}
\]
which is 
a strictly increasing function of $k\in (0,1)$ with range $(0,\infty)$ and hence has an inverse function.
\end{proof}

\subsection{Orbital Stability}
\label{sec:orbital-stability}

Recall that we say that a standing wave $\psi(t,x) = e^{-ia t} u(x)$ is
orbitally stable for the flow of~\eqref{eq:nls} in the function space
$X$ if for all $\eps>0$ there exists $\delta>0$ such that the
following holds: if $\psi_0\in X$ verifies
\[
\norm{\psi_0-u}_X\leq \delta
\]
then the solution $\psi$ of~\eqref{eq:nls} with initial data $\psi(0,x)=\psi_0$
verifies for all $t\in\R$ the estimate
\[
\inf_{\theta\in\R,y\in\R}\norm{\psi(t,\cdot)-e^{i\theta}u(\cdot-y)}_X<\eps.
\]
As an immediate corollary of the variational characterizations above, we have the
following orbital stability statements:
\begin{corollary}\label{cor:orbital}
The standing wave $\psi(t,x)=e^{-iat}u(x)$ is a solution of~\eqref{eq:nls}, 
and is orbitally stable in $X$ in the following cases.   
For Jacobi elliptic functions: for any $k\in(0,1)$, 
\begin{align*}
  a&=1+k^2,  &b& =-2k^2, & u&=\sn(\cdot,k), &X&=H^1_{\mathrm{loc}}\cap A_{2K}^-;\\ 
  a&=1-2k^2, &b&=2k^2, & u&=\cn(\cdot,k),&X&=H^1_{\mathrm{loc}}\cap A_{2K};\\ 
  a&=-(2-k^2), &b&=2, & u&=\dn(\cdot,k),&X&=H^1_{\mathrm{loc}}\cap P_{2K}.\\
\intertext{For constants and plane waves: $(b \not =0)$}
a&=-\frac{2bm}{T}, &-\infty<b&\leq\frac{\pi^2}{Tm},
  &u&=\sqrt{\frac{2m}{T}},&X&=H^1_{\mathrm{loc}}\cap P_{T};\\ 
a&= \frac{4\pi^2}{T^2} -\frac{2bm}{T}, &b&<0,
  &u&=e^{\pm\frac{2i\pi x}{T}}\sqrt{\frac{2m}{T}},&X&=H^1_{\mathrm{loc}}\cap A_{T/2}.
\end{align*}
\end{corollary}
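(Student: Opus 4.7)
The plan is to derive each row of the corollary from the corresponding global variational characterization by the classical compactness/contradiction argument of Cazenave--Lions, which in the periodic setting is particularly clean because $H^1_{\mathrm{loc}}\cap P_T$ embeds compactly into $L^2\cap L^4$ over $[0,T]$. The key observation is that each space $X$ appearing in the statement is invariant under the flow of \eqref{eq:nls} (as recorded after \eqref{eq:direct2}) and coincides with the ambient space of one of the fixed-mass minimization problems solved in Section~\ref{sec:variational}. So one and the same argument handles every row, and the first step is simply to match them up: $\sn$ via Proposition~\ref{prop:defocusing-A-}, $\cn$ via Proposition~\ref{prop:focusing-A}(iii), $\dn$ via Proposition~\ref{prop:focusing-P_T}(iv), the constant via Proposition~\ref{prop:focusing-P_T}(ii) or~\ref{prop:defocusing-P}, and the plane wave via Proposition~\ref{prop:defocusing-A}.

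For any such pair $(u,X)$, the argument proceeds by contradiction. Suppose there exist $\varepsilon_0>0$, initial data $\psi_{0,n}\to u$ in $X$, and times $t_n\in\R$ such that
\[
\inf_{\theta,y\in\R}\|\psi_n(t_n,\cdot)-e^{i\theta}u(\cdot-y)\|_X\geq \varepsilon_0,
\]
where $\psi_n$ solves \eqref{eq:nls} with data $\psi_{0,n}$ (global existence in $X$ follows from \cite{Ca03} and flow-invariance of $X$). Continuity of the conserved functionals on $H^1_{\mathrm{loc}}\cap P_T$, combined with their conservation in time, gives $\mathcal M(\psi_n(t_n))\to \mathcal M(u)=:m$ and $\mathcal E(\psi_n(t_n))\to \mathcal E(u)$, the latter being precisely the minimum of the relevant variational problem. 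Setting $\lambda_n=\sqrt{m/\mathcal M(\psi_n(t_n))}\to 1$ and $v_n=\lambda_n\psi_n(t_n)\in X$ produces a sequence with $\mathcal M(v_n)=m$ exactly and $\mathcal E(v_n)\to \mathcal E(u)$, hence a minimizing sequence for the associated problem.

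The strong $H^1_{\mathrm{loc}}$ compactness of minimizing sequences, together with uniqueness of the minimizer up to phase and translation, was established inside the proof of each proposition; extracting a subsequence therefore yields $\theta_n,y_n\in\R$ with $v_n - e^{i\theta_n}u(\cdot-y_n)\to 0$ in $X$, and since $\lambda_n\to 1$ the same conclusion holds with $\psi_n(t_n)$ in place of $v_n$, contradicting the displayed lower bound. The argument is essentially routine given the variational work; the only subtlety — and the closest thing to an obstacle — is verifying that every row of the corollary uses \emph{only} the mass constraint, so that the mass-rescaling step above actually lands the perturbed solution exactly on the constraint surface without any momentum-rescaling (which would be delicate). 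Inspection of the pairings above shows this is indeed the case in each row, and also confirms that $\mathcal P(u)=0$ (respectively $\mathcal P(u)=\pm 4\pi m/T$ for the plane wave) is automatic from the form of $u$, so no further work is needed.
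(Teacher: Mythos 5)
Your proposal is correct and follows essentially the same route as the paper, which simply invokes the global variational characterizations of Propositions~\ref{prop:focusing-P_T}--\ref{prop:defocusing-A-} together with compactness of minimizing sequences and the standard Cazenave--Lions contradiction argument, omitting the details you supply; your matching of each row to its proposition and your observation that only the mass constraint needs rescaling are exactly the right points. The only detail worth adding is that in the plane-wave row the minimizer is unique only up to phase \emph{and} complex conjugation, so the set of minimizers is a union of two orbits, and one needs the usual continuity-in-time plus separation argument to conclude that the solution stays near the orbit of its own initial plane wave.
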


The proof of this corollary uses the variational characterizations from
Propositions~\ref{prop:focusing-P_T},~\ref{prop:defocusing-P},
\ref{prop:focusing-A},~\ref{prop:defocusing-A}, and~\ref{prop:defocusing-A-}. 
Note that for all the minimization problems considered we have the compactness of minimizing sequences. The proof follows the standard line introduced by Cazenave
and Lions \cite{CaLi82}, we omit the details here.

\begin{remark}
The orbital stability of $\sn$ \cite{GaHa07-2} in 
$H^1_{\mathrm{loc}} \cap A_{T/2}$ was proved using the Grillakis-Shatah-Strauss \cite{GrShSt87,GrShSt90}
approach, which amounts to identifying the periodic wave as a \emph{local} constrained
minimizer in this subspace. So the above may be considered an alternate proof, using
\emph{global} variational information. In the case of $\sn$, without 
Conjecture~\ref{conj:minimizer-2}, some additional spectral information in the 
subspace $A_{T/2}^+$ is needed to obtain orbital stability in 
$H^1_{\mathrm{loc}} \cap A_{T/2}$ (rather than just $H^1_{\mathrm{loc}} \cap A_{T/2}^-$)
-- see Corollary~\ref{cor:sn-orbital} in the next section for this.

Orbital stability of $\cn$ was obtained in \cite{GaHa07-2} only for small
amplitude $\cn$. We extend this result to all possible values of
$k\in(0,1)$. 
\end{remark}
\begin{remark}
Using the complete integrability of~\eqref{eq:nls}, Bottman, Deconinck and Nivala \cite{BoDeNi11}, and Gallay and Pelinovsky 
\cite{GalPel15}
showed that $\sn$ is in fact a minimizer of a higher-order functional
in $H^2_{\mathrm{loc}} \cap P_{nT}$ for any $n \in \N$, and thus showed it is
orbitally stable in these spaces.
\end{remark}


\section{Spectral Stability}
\label{sec:stability}

Given a standing wave $\psi(t,x) = e^{-iat} u(x)$ 
solution of~\eqref{eq:nls},
we consider the linearization of~\eqref{eq:nls} around this solution:
if $\psi(t,x)=e^{-iat}(u(x)+h)$, then $h$ verifies
\[
i\partial_th - Lh + N(h) = 0,
\]
where $L$ denotes the linear part and $N$ the nonlinear part. Assuming
$u$ is real-valued, we separate $h$ into real and imaginary parts to get the equation
\[
\partial_t\binom{\Re(h)}{\Im(h)}=J\mathcal L \binom{\Re(h)}{\Im(h)}+\binom{-\Im(N(h))}{\Re(N(h))},
\]
where 
\[
\mathcal L=
\begin{pmatrix}
L_+&0\\
0&L_-
\end{pmatrix},
\quad
J=
\begin{pmatrix}
0&1\\
-1&0
\end{pmatrix},
\quad
\begin{aligned}
L_+&=-\partial_{xx}-a-3b \,u^2,\\
L_-&=-\partial_{xx}-a-b \, u^2.
\end{aligned}
\]
We call
\begin{equation} \label{eq:lin-form}
  J \mathcal L = \begin{pmatrix} 0 & L_- \\ -L_+ & 0 \end{pmatrix}
\end{equation}
the \emph{linearized operator} of~\eqref{eq:nls} about the standing wave
$e^{-i a t} u(x)$.

Now suppose $u \in H^1_{\mathrm{loc}} \cap P_T$ is a (period $T$) periodic wave, and 
consider its linearized operator $J \mathcal L$ as an operator
on the Hilbert space $( P_T)^2$, with domain $(H^2_{\mathrm{loc}} \cap P_T)^2$.
The main structural properties of $J \mathcal L$ are:
\begin{itemize}
\item
since $L_{\pm}$ are self-adjoint operators on $ P_T$,
$\mathcal L$ is self-adjoint on $(P_T)^2$, 
while $J$ is skew-adjoint and unitary
\begin{equation} \label{eq:sa}
  \mathcal L^* = \mathcal L, \quad J^* = -J = J^{-1},
\end{equation} 
\item
$J \mathcal L$ commutes with complex conjugation,
\begin{equation} \label{eq:real}
  \overline{ J \mathcal L \; f} = J \mathcal L \bar{f},  
\end{equation}
\item
$J \mathcal L$ is antisymmetric under conjugation by the matrix
\[
  C = \begin{pmatrix} 1 & 0 \\ 0 & -1 \end{pmatrix} 
\]
(which corresponds to the operation of complex conjugation
\emph{before} complexification), 
\begin{equation} \label{eq:conjugate} 
  J \mathcal L C = - C J \mathcal L.
\end{equation}
\end{itemize}

At the \emph{linear} level, the stability of the periodic wave is determined by 
the location of the spectrum $\sigma(J \mathcal L)$, which in this periodic 
setting consists of isolated eigenvalues of finite multiplicity \cite{ReSi78}.
We first make the standard observation that as a result 
of~\eqref{eq:real} and~\eqref{eq:conjugate}, the spectrum of $J \mathcal L$ 
is invariant under reflection about the real and imaginary axes:
\[
  \lambda \in \sigma(J \mathcal L) \; \implies \; 
  \pm \lambda, \; \pm \bar{\lambda} \in \sigma(J \mathcal L).
\]
Indeed, if $J \mathcal L f = \lambda f$, then 
\begin{gather*}
 ~\eqref{eq:real} \implies J \mathcal L \bar{f} = \bar{\lambda} \bar{f}, 
  \quad\eqref{eq:conjugate} \implies J \mathcal L C f = -\lambda Cf, \\
 ~\eqref{eq:real} \mbox{ and }\eqref{eq:conjugate} \implies J \mathcal L C \bar{f}
  = -\bar{\lambda} C \bar{f}.
\end{gather*}

We are interested in whether the entire spectrum of 
$J \mathcal L$ lies on the imaginary axis, denoted 
$\sigma(J \mathcal L |_{P_T}) \subset i \R$,
in which case we say the periodic wave $u$ is \emph{spectrally stable in $P_T$}. 
Moreover, if $S \subset P_T$ is an invariant subspace -- more precisely,
$J \mathcal L : (H^2_{\mathrm{loc}} \cap S)^2 \to  (S)^2$ --
then we will say that the periodic wave $u$ is \emph{spectrally stable in $S$}
if the entire $( S)^2$ spectrum of $J \mathcal L$ lies on 
the imaginary axis, denoted $\sigma( J \mathcal L |_{S} ) \subset i \R$.
In particular, for $k \in (0,1)$ and $K = K(k)$,
since $\sn^2$, $\cn^2$, $\dn^2$ $\in P_{2K}^+$, the 
corresponding linearized operators respect the decomposition~\eqref{eq:direct2}, 
and we may consider $\sigma( J \mathcal L |_S)$ for 
$S = P_{2K}^{\pm}, A_{2K}^{\pm} \subset P_{4K}$, with
\begin{equation} \label{eq:spec-decomp}
\begin{split}
\sigma\left(J\mathcal L |_{P_{4K}}\right) &= 
\sigma\left(J\mathcal L |_{P_{2K}}\right) \cup 
\sigma\left(J\mathcal L |_{A_{2K}}\right) \\ &= 
\sigma\left(J\mathcal L |_{P_{2K}^+}\right) \cup 
\sigma\left(J\mathcal L |_{P_{2K}^-}\right)  \cup 
\sigma\left(J\mathcal L |_{A_{2K}^+}\right) \cup 
\sigma\left(J\mathcal L |_{A_{2K}^-}\right).
\end{split}
\end{equation}

Of course, spectral stability (which is purely linear) is a weaker notion than orbital 
stability (which is nonlinear). Indeed, the latter implies the former -- see
Proposition~\ref{prop:orbital-spectral} and the remarks preceding it.

The main result of this section is the following.
\begin{theorem}\label{thm:linear-stability}
Spectral stability in $P_T$, $T = 4K(k)$, holds for:
\begin{itemize}
\item $u=\sn$, $k \in (0,1)$,
\item $u=\cn$ and $k\in(0,k_c)$, where $k_c$ is the unique $k\in (0,1)$ so that $K(k)=2E(k)$, $k_c\approx 0.908$.
\end{itemize}
\end{theorem}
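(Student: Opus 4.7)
The proof plan is to exploit the orthogonal invariant decomposition \eqref{eq:spec-decomp},
\[
\sigma(J\mathcal{L}|_{P_{4K}}) = \sigma(J\mathcal{L}|_{P_{2K}^+}) \cup \sigma(J\mathcal{L}|_{P_{2K}^-}) \cup \sigma(J\mathcal{L}|_{A_{2K}^+}) \cup \sigma(J\mathcal{L}|_{A_{2K}^-}),
\]
and to verify $\sigma(J\mathcal{L}|_S) \subset i\R$ on each of the four pieces by different arguments. The components $L_\pm$ of $\mathcal{L}$ are Lamé operators with $\sn^2$ or $\cn^2$ potentials, for which the low-lying eigenvalues on $P_{2K}^{\pm}$ and $A_{2K}^{\pm}$ are classical and can be extracted using Sturm--Liouville oscillation together with the known kernel generators $L_- u = 0$ and $L_+ u_x = 0$. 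First, I would assemble this spectral table and, in particular, count $n_-(\mathcal{L}|_S)$ on each of the four sectors.

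On the half-anti-periodic sectors $A_{2K}^\pm$, spectral stability should come directly from the variational characterizations of Section~\ref{sec:variational}. For $\cn$, Proposition~\ref{prop:focusing-A} gives that $\cn$ globally minimizes the energy with fixed mass on $A_{2K}$, hence $L_-|_{A_{2K}} \geq 0$ with $\ker L_- = \Span\{u\}$, and $L_+|_{A_{2K}}$ is positive on $\{u\}^\perp$ modulo $\ker L_+ = \Span\{u_x\}$; the corresponding statements for $\sn$ follow from Proposition~\ref{prop:defocusing-A-} on $A_{2K}^-$ and a direct Sturm count on $A_{2K}^+$. From here, the classical Hamiltonian argument closes the case: if $J\mathcal{L}(f,g) = \lambda(f,g)$ then $\langle \mathcal{L}(f,g),(f,g)\rangle$ is purely imaginary, so $\Re\lambda \langle \mathcal{L}(f,g),(f,g)\rangle = 0$; the orthogonality to the symmetry kernel is inherited from the eigenvalue equation, and coercivity modulo that kernel forces $\Re\lambda = 0$.

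On the half-periodic sectors $P_{2K}^\pm$, the linearized action $\mathcal{L}$ is not coercive and one cannot conclude directly. Instead I would apply the abstract Krein-signature Lemma~\ref{lem:abstract-coercivity}: the (finite) deficit $n_-(\mathcal{L}|_{P_{2K}})$ must be matched by an equal total count of unstable real-part eigenvalues of $J\mathcal{L}$ plus imaginary eigenvalues of negative Krein signature. The strategy is then to construct explicitly enough imaginary eigenvalues with negative Krein signature to absorb the entire deficit, forcing the unstable count to be zero. The natural candidates are bifurcations out of the generalized kernel of $J\mathcal{L}$ associated with the scaling/translation modes (such as $\partial_a u$ and $x u_x$), whose Krein sign, by a standard Pego--Weinstein type computation, reduces to the sign of $\partial_a \mathcal{M}(u)$ along the periodic-wave branch at fixed fundamental period. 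A routine elliptic-integral calculation identifies this sign, in the $\cn$ family, with that of $K(k) - 2 E(k)$, yielding stability precisely for $k \in (0,k_c)$; for $\sn$ the defocusing sign of $b$ keeps this quantity of the correct sign on the whole interval $k \in (0,1)$.

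The main obstacle is the Krein bookkeeping on $P_{2K}$ for $\cn$: one must determine $n_-(\mathcal{L}|_{P_{2K}^+})$ and $n_-(\mathcal{L}|_{P_{2K}^-})$ exactly from the $\ell=2$ Lamé spectrum on each parity sector, and then produce the corresponding imaginary eigenvalues of $J\mathcal{L}$ whose Krein signs can be computed and shown to change sign precisely at $K(k) = 2 E(k)$. This is the step where the $k_c$ threshold appears, and it is consistent with the instability obtained in Section~\ref{sec:instability} beyond this range. Once the Krein count is settled, Lemma~\ref{lem:abstract-coercivity} closes the $P_{2K}$ analysis, and combining with the $A_{2K}$ analysis through \eqref{eq:spec-decomp} yields $\sigma(J\mathcal{L}|_{P_T}) \subset i\R$ in both advertised cases.
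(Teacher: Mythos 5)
Your overall architecture matches the paper's: decompose via \eqref{eq:spec-decomp}, dispose of $A_{2K}^{\pm}$ by the variational characterizations (this is exactly Proposition~\ref{prop:orbital-spectral}, where for $\sn$ the roles of $L_+$ and $L_-$ are reversed), note that $P_{2K}^-$ is trivial because $\mathcal L|_{P_{2K}^-}>0$, and close $P_{2K}^+$ by a Krein-signature argument based on Lemma~\ref{lem:abstract-coercivity}. The gap is in your mechanism for producing the required negative-Krein-signature eigenvalue on $P_{2K}^+$. You propose to take it from bifurcations out of the generalized kernel of $J\mathcal L$ attached to the symmetry modes ($u$, $u_x$, $\partial_a u$, $xu_x$), with Krein sign governed by $\partial_a\mathcal M$ in Pego--Weinstein fashion. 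But for $u=\cn$ or $\sn$ the kernel generators $u\in A_{2K}$ and $u_x\in A_{2K}$ both lie in the half-anti-periodic sector, as do their generalized-kernel partners $\phi_1,\xi_1$ (Lemma~\ref{lem:explicit}); and $xu_x$ is not even periodic. Consequently $J\mathcal L|_{P_{2K}^+}$ has \emph{trivial} kernel and no generalized kernel at all: there is nothing to bifurcate from in that sector, and the sign of $\partial_a\mathcal M$ (which is the Grillakis--Shatah--Strauss quantity relevant to the $A_{2K}$ sector, already handled variationally) has no bearing on $P_{2K}^+$.

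What is actually needed -- and what the paper supplies -- is an explicit, genuinely nonzero purely imaginary eigenvalue of $J\mathcal L|_{P_{2K}^+}$ whose eigenfunction has negative energy. Concretely, $L_-^{\cn}1=-\dn^2+k^2\sn^2$ and $L_+^{\cn}(-\dn^2+k^2\sn^2)=1$ give
\[
J\mathcal L^{\cn}\binom{-\dn^2+k^2\sn^2}{i}=i\binom{-\dn^2+k^2\sn^2}{i},
\qquad
\dual{\mathcal L^{\cn}\xi}{\xi}=4K(k)-8E(k),
\]
and the threshold $k_c$ arises precisely as the value where this energy changes sign -- not from any $\partial_a\mathcal M$ computation. (The analogous identities $L_+^{\sn}(\dn^2+k^2\cn^2)=-(1-k^2)^2$, $L_-^{\sn}1=-(\dn^2+k^2\cn^2)$ give eigenvalue $i(1-k^2)$ with energy negative for all $k$, via \eqref{eq:2.1}.) Your Krein ``bookkeeping'' in the form of a full index theorem is also more than Lemma~\ref{lem:abstract-coercivity} establishes; the lemma only needs $n_-(L_\pm|_{P_{2K}^+})=1$ (read off from Figure~\ref{fig:spectra}: $\chi_-$ for $L_+$ and $\dn$ for $L_-$) together with the one explicit negative-energy imaginary eigenvector above. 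Without producing that eigenvector, your argument on $P_{2K}^+$ does not close.
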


\begin{remark}
 The function $f(k)=K(k)-2E(k)$ is strictly increasing in $k\in
 (0,1)$, (since $K(k)$ is increasing while $E(k)$ is decreasing in
 $k$), with $f(0)=-\frac \pi2 $ and $f(1)=\infty$.
\end{remark}

\begin{remark}
Using Evans function techniques, it was proved in \cite{IvLa08} that $\sigma(J\mathcal L^{\cn}) \subset i\R$ also
for $k\in[k_c,1)$. This fact is also supported by
numerical evidence (see Section~\ref{sec:num-exp}).
\end{remark}

\begin{remark}
In the case of $\sn$, the $H^2_{\mathrm{loc}} \cap P_{nT}$ orbital stability
obtained in \cite{BoDeNi11,GalPel15} (using integrability) immediately
implies spectral stability in $P_{nT}$, and in particular in $P_T$.
So our result for $\sn$ could be considered an alternate, elementary proof,
not relying on the integrability. 
\end{remark}

\begin{remark}
The spectral stability of $\dn$ in $P_{2K}$ (its own fundamental period)
is an immediate consequence of its orbital stability in $H^1_{\mathrm{loc}} \cap P_{2K}$,
see Proposition~\ref{prop:orbital-spectral}.
\end{remark}

\subsection{Spectra of $L_+$ and $L_-$}

We assume now that we are given $k\in(0,1)$ and we describe the
spectrum of $L_+$ and $L_-$ in $ P_{4K}$ when
$\phi$ is $\cn$, $\dn$ or $\sn$. When $\phi=\sn$, we denote $L_+$ by
$L_+^{\sn}$, and we use similar notations for $L_-$ and $\cn, \dn$. Due
to the algebraic relationships between $\cn$, $\dn$ and $\sn$, we
have
\begin{align*}
  L_+^{\sn}&=-\partial_{xx}-(1+k^2)+6k^2\sn^2,\\
  L_+^{\cn}&=-\partial_{xx}-(1-2k^2)-6k^2\cn^2=L_+^{\sn}-3k^2,\\
  L_+^{\dn}&=-\partial_{xx}+(2-k^2)-6\dn^2=L_+^{\sn}-3.
\end{align*}
Similarly for $L_-$, we obtain
\begin{align*}
  L_-^{\sn}&=-\partial_{xx}-(1+k^2)+2k^2\sn^2,\\
  L_-^{\cn}&=-\partial_{xx}-(1-2k^2)-2k^2\cn^2=L_-^{\sn}+k^2,\\
  L_-^{\dn}&=-\partial_{xx}+(2-k^2)-2\dn^2=L_-^{\sn}+1.
\end{align*}
As a consequence, $L_\pm^{\sn}$, $L_\pm^{\cn}$, and $L_\pm^{\dn}$ share
the same eigenvectors. Moreover, these operators enter in the framework
of Schr\"odinger operators with periodic potentials and much can
be said about their spectrum (see e.g. \cite{Ea73,ReSi78}). Recall in
particular that given a Schr\"odinger operator $L=-\partial_{xx}+V$
with periodic potential $V$ of period $T$, the eigenvalues $\lambda_n$
of $L$ on $P_T$ satisfy
\[
\lambda_0<\lambda_1\leq \lambda_2<\lambda_3\leq \lambda_4<\cdots,
\]
with corresponding eigenfunctions $\psi_n$ such that
$\psi_0$ has no zeros, $\psi_{2m+1}$ and $\psi_{2m+2}$ have exactly
$2m+2$ zeros in $[0,T)$ (\cite[p. 39]{Ea73}). From the equations
satisfied by $\cn$, $\dn$, $\sn$, we directly infer that
\begin{align*}
L_-^{\sn}\dn&=-\dn,
& L_-^{\sn}\cn&=-k^2\cn,
& L_-^{\sn}\sn&=0.
\end{align*}
Taking the derivative with respect to $x$ of the equations
satisfied by $\cn$, $\dn$, $\sn$, we obtain
\begin{align*}
 L_+^{\sn}\partial_x\sn&=0,
& L_+^{\sn}\partial_x\cn&=3k^2\partial_x\cn,
& L_+^{\sn}\partial_x\dn&=3\partial_x\dn.
\end{align*}
Looking for eigenfunctions in the form $\chi=1-A\sn^2$ for $A\in\R$, we
find two other eigenfunctions:
\begin{align*}
L_+^{\sn}\chi_-&=e_-\chi_-,
& L_+^{\sn}\chi_+&=e_+\chi_+,
\end{align*}
where 
\begin{gather*}
\chi_\pm=1-\left(k^2+1\pm\sqrt{k^4-k^2+1}\right)\sn^2,\\
\pm e_\pm=\pm\left(k^2+1\pm 2\sqrt{k^4-k^2+1}\right)>0.
\end{gather*}
In the interval $[0,4K)$, $\chi_-$ has no zero, $\sn_x$ and $\cn_x$ have two zeros 
each, while $\dn_x$ and $\chi_+$ have 4 zeros each. By Sturm-Liouville theory,
they are the first 5 eigenvectors of $L_+$ for each of $\sn$, $\cn$, and $\dn$, 
and all other eigenfunctions have strictly greater eigenvalues.
Similarly, $\dn > 0$ has no zeros, while $\cn$ and $\sn$ have two each, so
these are the first $3$ eigenfunctions of $L_-$ for each of $\sn$, $\cn$, and $\dn$, 
and all other eigenfunctions have strictly greater eigenvalues.

The spectra of $L_\pm^{\sn}$, $L_\pm^{\cn}$, and $L_\pm^{\dn}$ are
represented in Figure~\ref{fig:spectra}, where the eigenfunctions
are also classified with respect to the subspaces of decomposition 
\eqref{eq:direct2}.


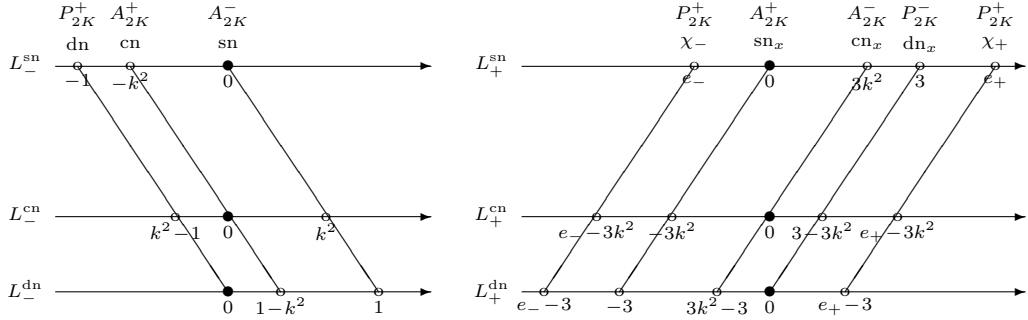
\begin{figure}[htpb!]
\setlength{\unitlength}{1mm}
\centering
\begin{picture}(133,44)(0,0)

\put (4,35){\vector(1,0){50}}
\put (4,15){\vector(1,0){50}}
\put (4,5){\vector(1,0){50}}
\put (66,35){\vector(1,0){67}}
\put (66,15){\vector(1,0){67}}
\put (66,5){\vector(1,0){67}}

\put (0,35){\makebox(0,0)[c]{\scriptsize $L_-^{\sn}$}}
\put (0,15){\makebox(0,0)[c]{\scriptsize $L_-^{\cn}$}}
\put (0,5){\makebox(0,0)[c]{\scriptsize $L_-^{\dn}$}}
\put (62,35){\makebox(0,0)[c]{\scriptsize $L_+^{\sn}$}}
\put (62,15){\makebox(0,0)[c]{\scriptsize $L_+^{\cn}$}}
\put (62,5){\makebox(0,0)[c]{\scriptsize $L_+^{\dn}$}}

\put(7,35) {\line(2,-3){20}} 
\put(14,35) {\line(2,-3){20}}
\put(27,35) {\line(2,-3){20}}
\put(89,35) {\line(-2,-3){20}}
\put(99,35) {\line(-2,-3){20}}
\put(112,35) {\line(-2,-3){20}}
\put(119,35) {\line(-2,-3){20}}
\put(129,35) {\line(-2,-3){20}}

\put(7,38) {\makebox(0,0)[c]{\scriptsize $\dn$}}
\put(14,38) {\makebox(0,0)[c]{\scriptsize $\cn$}}
\put(27,38) {\makebox(0,0)[c]{\scriptsize $\sn$}}
\put(89,38)  {\makebox(0,0)[c]{\scriptsize $\chi_-$}}
\put(99,38)  {\makebox(0,0)[c]{\scriptsize $\sn_x$}}
\put(112,38)  {\makebox(0,0)[c]{\scriptsize $\cn_x$}}
\put(119,38) {\makebox(0,0)[c]{\scriptsize $\dn_x$}}
\put(129,38)  {\makebox(0,0)[c]{\scriptsize $\chi_+$}}

\put(7,42) {\makebox(0,0)[c]{\scriptsize $P_{2K}^+$}}
\put(14,42) {\makebox(0,0)[c]{\scriptsize $A_{2K}^+$}}
\put(27,42) {\makebox(0,0)[c]{\scriptsize $A_{2K}^-$}}
\put(89,42)  {\makebox(0,0)[c]{\scriptsize $P_{2K}^+$}}
\put(99,42)  {\makebox(0,0)[c]{\scriptsize $A_{2K}^+$}}
\put(112,42)  {\makebox(0,0)[c]{\scriptsize $A_{2K}^-$}}
\put(119,42) {\makebox(0,0)[c]{\scriptsize $P_{2K}^-$}}
\put(129,42)  {\makebox(0,0)[c]{\scriptsize $P_{2K}^+$}}

\put(7,33) {\makebox(0,0)[c]{\scriptsize $-1$}}
\put(14,33) {\makebox(0,0)[c]{\scriptsize $-k^2$}}
\put(27,33) {\makebox(0,0)[c]{\scriptsize $0$}}
\put(89,33)  {\makebox(0,0)[c]{\scriptsize $e_-$}}
\put(99,33)  {\makebox(0,0)[c]{\scriptsize $0$}}
\put(112,33)  {\makebox(0,0)[c]{\scriptsize $3k^2$}}
\put(119,33) {\makebox(0,0)[c]{\scriptsize $3$}}
\put(129,33) {\makebox(0,0)[c]{\scriptsize $e_+$}}

\put(20,13) {\makebox(0,0)[c]{\scriptsize $k^2\!-\!1$}}
\put(27,13) {\makebox(0,0)[c]{\scriptsize $0$}}
\put(40,13) {\makebox(0,0)[c]{\scriptsize $k^2$}}
\put(76,13) {\makebox(0,0)[c]{\scriptsize $e_- \!-\!3k^2$}}
\put(86,13)  {\makebox(0,0)[c]{\scriptsize $-3k^2$}}
\put(99,13)  {\makebox(0,0)[c]{\scriptsize $0$}}
\put(106,13)  {\makebox(0,0)[c]{\scriptsize $3\!-\!3k^2$}}
\put(116,13)  {\makebox(0,0)[c]{\scriptsize $e_+\!-\!3k^2$}}

\put(27,3) {\makebox(0,0)[c]{\scriptsize $0$}}
\put(34,3) {\makebox(0,0)[c]{\scriptsize $1\!-\!k^2$}}
\put(47,3) {\makebox(0,0)[c]{\scriptsize $1$}}
\put(69,3)  {\makebox(0,0)[c]{\scriptsize $e_- \!-\!3$}}
\put(79,3) {\makebox(0,0)[c]{\scriptsize $-3$}}
\put(92,3)  {\makebox(0,0)[c]{\scriptsize $3k^2\!-\!3$}}
\put(99,3)  {\makebox(0,0)[c]{\scriptsize $0$}}
\put(109,3)  {\makebox(0,0)[c]{\scriptsize $e_+\!-\!3$}}

\put(7,35) {\makebox(0,0)[c]{\scriptsize o}}
\put(14,35) {\makebox(0,0)[c]{\scriptsize o}}
\put(27,35) {\makebox(0,0)[c]{$\bullet$}}
\put(89,35)  {\makebox(0,0)[c]{\scriptsize o}}
\put(99,35)  {\makebox(0,0)[c]{$\bullet$}}
\put(112,35)  {\makebox(0,0)[c]{\scriptsize o}}
\put(119,35) {\makebox(0,0)[c]{\scriptsize o}}
\put(129,35) {\makebox(0,0)[c]{\scriptsize o}}

\put(20,15) {\makebox(0,0)[c]{\scriptsize o}}
\put(27,15) {\makebox(0,0)[c]{$\bullet$}}
\put(40,15) {\makebox(0,0)[c]{\scriptsize o}}
\put(76,15) {\makebox(0,0)[c]{\scriptsize o}}
\put(86,15)  {\makebox(0,0)[c]{\scriptsize o}}
\put(99,15)  {\makebox(0,0)[c]{$\bullet$}}
\put(106,15)  {\makebox(0,0)[c]{\scriptsize o}}
\put(116,15)  {\makebox(0,0)[c]{\scriptsize o}}

\put(27,5) {\makebox(0,0)[c]{$\bullet$}}
\put(34,5) {\makebox(0,0)[c]{\scriptsize o}}
\put(47,5) {\makebox(0,0)[c]{\scriptsize o}}
\put(69,5)  {\makebox(0,0)[c]{\scriptsize o}}
\put(79,5) {\makebox(0,0)[c]{\scriptsize o}}
\put(92,5)  {\makebox(0,0)[c]{\scriptsize o}}
\put(99,5)  {\makebox(0,0)[c]{$\bullet$}}
\put(109,5)  {\makebox(0,0)[c]{\scriptsize o}}

\end{picture}
\setlength{\unitlength}{1pt}
\caption{Eigenvalues for $L_-$ and $L_+$ in $P_{4K}$. 
}
\label{fig:spectra}

\end{figure}

We may now recover the result of \cite{GaHa07-2} that $\sn$ is orbitally stable in 
$H^1_{\mathrm{loc}} \cap A_{2K}$, using the following simple consequences of the spectral
information above:
\begin{lemma} \label{lemma:sn-coercivity}
There exists $\delta>0$ such that
the following coercivity properties hold.
\begin{enumerate}
\item
$L_+^{\sn} |_{A_{2K}^-} > \delta$,
\item
$L_-^{\sn} |_{A_{2K}^- \cap \{ \sn \}^{\perp}} > \delta$,
\item
$L_+^{\sn} |_{A_{2K}^+ \cap \{ (\sn)_x \}^{\perp}} > \delta$,
\item
$L_-^{\sn} |_{A_{2K}^+ \cap \{ (\sn)_x \}^{\perp}} > \delta$.
\end{enumerate}
\end{lemma}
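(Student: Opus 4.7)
My plan is to read off (1), (2), (3) directly from the spectral information for $L_\pm^{\sn}$ on $P_{4K}$ displayed in Figure~\ref{fig:spectra}, using the subspace decomposition $P_{4K} = P_{2K}^+ \oplus P_{2K}^- \oplus A_{2K}^+ \oplus A_{2K}^-$ and the strict ordering of consecutive Sturm--Liouville eigenvalue pairs. For (1), the bottom of the spectrum of $L_+^{\sn}$ on $A_{2K}^-$ is $3k^2 > 0$, attained by $\cn_x$, so $\delta_1 = 3k^2$ works. For (2), the only nonpositive eigenvalue of $L_-^{\sn}$ on $A_{2K}^-$ is $0$, simple with eigenfunction $\sn$; imposing $\perp \sn$ eliminates the bottom of the spectrum and leaves a strictly positive gap. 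Claim (3) is analogous: the only nonpositive eigenvalue of $L_+^{\sn}$ on $A_{2K}^+$ is $0$ (simple, with eigenfunction $\sn_x$), so projection onto $\{\sn_x\}^\perp$ yields strict coercivity. Moreover, among the first five eigenfunctions $\chi_-,\sn_x,\cn_x,\dn_x,\chi_+$ of $L_+^{\sn}$ only $\sn_x$ lies in $A_{2K}^+$, so the second eigenvalue of $L_+^{\sn}|_{A_{2K}^+}$ is at least $\lambda_5 > \lambda_4 = e_+$ by strict Sturm--Liouville between consecutive pairs, giving the quantitative bound $\delta_3 > e_+$.

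The main obstacle is (4), because $L_-^{\sn}|_{A_{2K}^+}$ has a genuinely negative eigenvalue $-k^2$ with eigenfunction $\cn$, and the constraint $\perp \sn_x$ does \emph{not} eliminate this direction: a short integration by parts using $\partial_x \cn = -\sn\dn$ shows $\ps{\cn}{\sn_x} = \int_0^{4K}\cn^2\dn = \pi \neq 0$. My idea to bypass this is the potential comparison $L_-^{\sn} = L_+^{\sn} - 4k^2\sn^2$ together with the pointwise bound $\sn^2 \leq 1$, which yields the quadratic-form inequality $L_-^{\sn} \geq L_+^{\sn} - 4k^2 I$. Applied on $A_{2K}^+ \cap \{\sn_x\}^\perp$ and combined with (3), this gives
\[
\ps{L_-^{\sn} f}{f} \;\geq\; (\delta_3 - 4k^2)\,\norm{f}_{L^2}^2,
\]
so (4) holds with $\delta_4 = \delta_3 - 4k^2$ as soon as $\delta_3 > 4k^2$.

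It thus suffices to check the elementary inequality $e_+ > 4k^2$ for $k \in (0,1)$, since then $\delta_3 > e_+ > 4k^2$ by the previous paragraph. Writing $e_+ - 4k^2 = 1 - 3k^2 + 2\sqrt{k^4 - k^2 + 1}$, positivity is immediate when $k^2 \leq 1/3$. For $k^2 > 1/3$ one squares both sides of $2\sqrt{k^4-k^2+1} > 3k^2 - 1$ to reduce to $5k^4 - 2k^2 - 3 < 0$, i.e.\ $(5k^2+3)(k^2-1) < 0$, which holds on $(0,1)$.
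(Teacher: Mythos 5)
Your proof is correct and follows essentially the same route as the paper: items (1)--(3) are read off from the spectral decomposition in Figure~\ref{fig:spectra}, and item (4) is obtained from the comparison $L_-^{\sn} = L_+^{\sn} - 4k^2\sn^2 \geq L_+^{\sn} - 4k^2$ on $A_{2K}^+\cap\{\sn_x\}^\perp$ together with the bound $L_+^{\sn}|_{\{\sn_x\}^\perp} > e_+$ and the inequality $e_+ > 4k^2$. The only difference is that you spell out the factorization $(5k^2+3)(k^2-1)<0$ verifying $e_+>4k^2$, which the paper dismisses as ``easily verified.''
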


\begin{proof}
The first three are immediate from figure~\ref{fig:spectra}
(note the first two also follow from the minimization property
Proposition~\ref{prop:defocusing-A-}), while we see that in $A_{2K}^+$, 
$L_+^{\sn} |_{ \{ (\sn)_x \}^\perp } > e_+$, so since $\sn^2(x) \leq 1$,
\[
  L_-^{\sn} |_{ \{ (\sn)_x \}^\perp }  =  
  \left( L_+^{\sn} - 4k^2 \sn^2 \right) |_{ \{ (\sn)_x \}^\perp }
  > e_+ - 4 k^2 > 0
\]
where the last inequality is easily verified.
\end{proof}

\begin{corollary} \label{cor:sn-orbital}
For all $k\in(0,1)$, the standing wave $\psi(t,x)=e^{-i(1+k^2)t}\sn(x,k)$ is orbitally stable in $H^1_{\mathrm{loc}} \cap A_{2K}$.
\end{corollary}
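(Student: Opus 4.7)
The plan is a Grillakis--Shatah--Strauss style orbital stability argument, using the coercivity information in Lemma~\ref{lemma:sn-coercivity} together with momentum conservation; this is needed because Proposition~\ref{prop:defocusing-A-} characterizes $\sn$ variationally only in the odd subspace $A_{T/2}^-$, not in the full $A_{T/2}$.

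First, I observe that both spatial translation and phase rotation preserve $A_{T/2}$, so the orbit of $\sn$ stays inside $A_{T/2}$, and $A_{T/2}$ itself is preserved by the flow. For $\psi_0 \in H^1_{\mathrm{loc}} \cap A_{T/2}$ close to $\sn$ in $H^1$, the implicit function theorem yields $C^1$ modulation parameters $(\theta(t),y(t))$ such that
\[
\psi(t,x) = e^{i\theta(t)} \bigl( \sn + w(t,\cdot) \bigr)(x-y(t))
\]
with the exact orthogonality conditions $\ps{w_1(t)}{(\sn)_x}=0$ and $\ps{w_2(t)}{\sn}=0$, where $w = w_1 + iw_2$. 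The functional $\mathcal{J}(\psi) := \mathcal{E}(\psi) - (1+k^2)\mathcal{M}(\psi)$ is conserved, invariant under phase and translation, and admits $\sn$ as a critical point, so a Taylor expansion combined with conservation gives
\[
\tfrac12 Q[w(t)] + O(\|w(t)\|_{H^1}^3) = \mathcal{J}(\psi_0) - \mathcal{J}(\sn) = O(\|w_0\|_{H^1}^2),
\]
where $Q[w] := \ps{L_+^{\sn} w_1}{w_1} + \ps{L_-^{\sn} w_2}{w_2}$.

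The core step is to establish coercivity $\|w\|_{H^1}^2 \lesssim Q[w] + \mu(t)^2$ with $\mu(t) := \ps{w_2(t)}{(\sn)_x}$. I decompose $w_j = w_j^+ + w_j^-$ by parity; since $\sn^2 \in P_{2K}^+$, the operators $L_\pm^{\sn}$ preserve $A_{2K} = A_{2K}^+ \oplus A_{2K}^-$. Items~(1) and~(3) of Lemma~\ref{lemma:sn-coercivity}, combined with the translation orthogonality and $(\sn)_x \in A_{2K}^+$, yield $\ps{L_+^{\sn} w_1}{w_1} \geq \delta \|w_1\|_{L^2}^2$; item~(2) with $\sn \in A_{2K}^-$ and the phase orthogonality gives $\ps{L_-^{\sn} w_2^-}{w_2^-} \geq \delta \|w_2^-\|_{L^2}^2$. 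The main obstacle is the remaining term $\ps{L_-^{\sn} w_2^+}{w_2^+}$: item~(4) requires $w_2^+ \perp (\sn)_x$, which is not enforced by the modulation. Here momentum conservation saves the argument: because $\mathcal{P}(\sn) = 0$ and $\mathcal{P}(\sn+w) = \ps{w_2}{(\sn)_x} + \mathcal{P}(w)$, conservation yields
\[
\mu(t) = \mathcal{P}(\psi_0) - \mathcal{P}(w(t)) = O(\|w_0\|_{H^1}) + O(\|w(t)\|_{H^1}^2).
\]
Writing $w_2^+ = \eta\,(\sn)_x + \tilde w_2^+$ with $\tilde w_2^+ \perp (\sn)_x$ and $|\eta| \lesssim |\mu|$, item~(4) applied to $\tilde w_2^+$ (together with elementary cross-term bounds using that $L_-^{\sn}(\sn)_x$ is bounded) gives $\ps{L_-^{\sn} w_2^+}{w_2^+} \geq \tfrac{\delta}{2} \|w_2^+\|_{L^2}^2 - C \mu^2$. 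Summing the four contributions produces $Q[w] \geq c \|w\|_{L^2}^2 - C\mu^2$, and the $-\partial_{xx}$ leading term of $L_\pm^{\sn}$ upgrades this $L^2$ bound to the desired $H^1$ coercivity.

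Combining the coercivity with the Taylor estimate and the bound on $\mu$ produces the bootstrap inequality
\[
\|w(t)\|_{H^1}^2 \lesssim \|w_0\|_{H^1}^2 + \|w(t)\|_{H^1}^3,
\]
which, for $\|w_0\|_{H^1}$ small enough, closes via a standard continuity argument to give $\|w(t)\|_{H^1} \lesssim \|w_0\|_{H^1}$ uniformly in $t$ -- that is, orbital stability in $H^1_{\mathrm{loc}} \cap A_{2K}$. The main subtlety is that the orthogonality $\ps{w_2}{(\sn)_x} = 0$ is only approximate, but its squared contribution $\mu^2$ is of order $\|w_0\|_{H^1}^2$, exactly matching the size of the energy defect, so the bootstrap goes through.
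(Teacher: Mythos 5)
Your proof is correct and follows essentially the same route as the paper: the paper's proof simply states that Lemma~\ref{lemma:sn-coercivity} exhibits $\sn$ as a non-degenerate (modulo phase and translation) local minimizer of the energy with fixed mass \emph{and momentum}, and then invokes the classical Cazenave--Lions argument. Your modulation/Taylor-expansion writeup supplies exactly the details behind that citation, and in particular your use of momentum conservation to control the $(\sn)_x$-component of $\Im w$ (the one direction not handled by the modulation orthogonalities, cf.\ item (4) of the lemma) is precisely the content of the phrase ``with fixed mass and momentum'' in the paper's one-line proof.
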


\begin{proof}
Lemma~\ref{lemma:sn-coercivity} shows that $\sn$ is a non-degenerate
(up to phase and translation) \emph{local} minimizer of the energy with fixed 
mass and momentum.  So the classical 
Cazenave-Lions \cite{CaLi82} argument yields the orbital stability.
\end{proof}

Finally, we also record here the following computations concerning
$L_{\pm}^{\cn}$, used in analyzing the
generalized kernel of $J \mathcal L^{\cn}$ in the next subsection:

\begin{lemma}\label{lem:explicit}
Define $\hat E(x,k)=E(\phi,k)|_{\sin \phi = \sn(x,k)}$.
Let $\phi_1$ and $\xi_1$ be given by the following expressions.
\begin{align*}
\phi_1&=\frac{\left(\hat E(x,k)-\frac{
    E}{K}x\right)\cn_x-k^2\cn^3+{\frac {K{k}^{2}-
        E}{K}}\cn}{2(2k^2-1)\frac {
    E}{K}+2(1-k^2)},\\
\xi_1&=\frac{\left(\hat E(x,k)-\frac{
    E}{K}x\right)\cn+\cn_x }{-2(1-k^2) +\frac{2
    E}{K}}.
\end{align*}
The denominators are positive and we have
\[
L_+^{\cn}\phi_1=\cn,\qquad L_-^{\cn} \xi_1=\cn_x.
\]
Note $\hat E$ and $\xi_1$ are odd while $\phi_1$ is even. In particular $(\phi_1,\cn_x)=0= (\xi_1, \cn)$.
Moreover, $L_+^{\cn} (\frac12\cn-(1-2k^2)\phi_1)=\cn_{xx}$.
\end{lemma}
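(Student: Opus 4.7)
My plan is to verify the two central identities $L_+^{\cn}\phi_1 = \cn$ and $L_-^{\cn}\xi_1 = \cn_x$ by direct computation, exploiting the single crucial derivative formula $\partial_x \hat E(x,k) = \dn^2(x,k)$, which follows from the integral representation~\eqref{eq:2.2} together with the substitution $d\theta = \dn(z,k)\,dz$ used implicitly there. Writing $F(x) := \hat E(x,k) - \tfrac{E}{K}x$, one has $F'(x) = \dn^2 - \tfrac{E}{K}$, which is periodic with zero mean, so the combinations $F\cdot\cn$ and $F\cdot\cn_x$ produce periodic functions after the action of $L_\pm^{\cn}$ only if the \emph{secular} terms (those proportional to $x$) cancel out. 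This cancellation is the motivation behind the precise coefficients appearing in $\phi_1$ and $\xi_1$.

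For $\xi_1$, I would substitute the formula into $L_-^{\cn}$ and use the standard derivatives $\cn_x = -\sn\dn$, $\cn_{xx} = -(1-2k^2)\cn - 2k^2\cn^3$, together with the identities $\sn^2 + \cn^2 = 1$ and $\dn^2 = 1 - k^2\sn^2$, to simplify. The terms proportional to $F$ cancel because $L_-^{\cn}\cn = 0$ (a direct check from the $\cn$ ODE), so the secular contributions disappear; what remains is a constant multiple of $\cn_x$, and that constant turns out to equal the denominator $-2(1-k^2) + 2E/K$. The strategy for $\phi_1$ is parallel, now using $L_+^{\cn}\cn_x = 0$ (as $\cn_x$ lies in the kernel of $L_+^{\cn}$, cf.~Figure~\ref{fig:spectra}) to kill the secular terms generated by the $F\cdot\cn_x$ piece; the leftover periodic part collapses to $\cn$ times the denominator.

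For the positivity of denominators: $-2(1-k^2) + 2E/K = \tfrac{2}{K}(E - (1-k^2)K) > 0$ is immediate from~\eqref{eq:2.1}. For $\phi_1$, setting $f(k) := (2k^2-1)E + (1-k^2)K$, one computes $f(0) = 0$, $f(1) = 1$, and, using the standard derivatives of $E$ and $K$, $f'(k) = 3k(2E - K)$, which is positive on $(0,k_c)$ and negative on $(k_c,1)$; combined with $f(1) > 0$ this forces $f > 0$ on $(0,1)$. Parity is immediate from the well-known parities $\cn(-x)=\cn(x)$, $\dn(-x)=\dn(x)$, $\sn(-x)=-\sn(x)$, whence $\cn_x$ and $\hat E$ are odd, giving $\phi_1$ even and $\xi_1$ odd; the orthogonality $(\phi_1,\cn_x) = 0 = (\xi_1,\cn)$ follows at once by integrating odd functions over a full period.

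The last identity $L_+^{\cn}\bigl(\tfrac12\cn - (1-2k^2)\phi_1\bigr) = \cn_{xx}$ is then a one-line consequence: using $L_+^{\cn}\phi_1 = \cn$ and the direct computation $L_+^{\cn}\cn = -4k^2\cn^3$ (obtained by substituting the $\cn$ ODE into the definition of $L_+^{\cn}$), the left-hand side becomes $-2k^2\cn^3 - (1-2k^2)\cn$, which is exactly $\cn_{xx}$ by the cnoidal ODE~\eqref{eq:a-b-cn}. The main obstacle is purely the bookkeeping in the verification of the two central identities: the non-periodic contributions must be tracked carefully to confirm their cancellation, which is precisely what forces the specific linear combinations of $\cn$, $\cn^3$, $\cn_x$ and $F\cdot\cn$, $F\cdot\cn_x$ appearing in the numerators of $\phi_1$ and $\xi_1$.
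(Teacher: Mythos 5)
Your proposal is correct and follows essentially the same route as the paper: direct verification of $L_+^{\cn}\phi_1=\cn$ and $L_-^{\cn}\xi_1=\cn_x$ using $\partial_x\hat E=\dn^2$, the kernel relations $L_+^{\cn}\cn_x=0$ and $L_-^{\cn}\cn=0$ to kill the terms in which $F=\hat E-\frac{E}{K}x$ appears undifferentiated, and the elliptic identities to collapse the remainder (one small imprecision: since $\hat E(x+2K)=\hat E(x)+2E$, the function $F$ is itself already $2K$-periodic, so $F\cn$ and $F\cn_x$ are periodic unconditionally --- the ``secular cancellation'' lives in the construction of $F$, not in the action of $L_\pm^{\cn}$). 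The only genuine divergence is your positivity argument for the $\phi_1$ denominator, where you study $f(k)=(2k^2-1)E+(1-k^2)K$ through $f'(k)=3k(2E-K)$ and its sign change at $k_c$; this is valid, but the paper's argument is more elementary --- for $2k^2<1$ it simply uses $E/K<1$ to bound the denominator below by $2(2k^2-1)+2(1-k^2)=2k^2>0$.
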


\begin{proof}
Recall that the elliptic integral of the second kind $\hat E(x,k)$ is not periodic. 
In fact, it is asymptotically linear in $x$ and verifies
\[
\hat E(x+2K,k)=\hat E(x,k)+2E(k).
\]
By~\eqref{eq:2.2}, $\partial_x \hat E(x,k) = \dn^2(x,k)$.
Denote $L_\pm = L_\pm^{\cn}$ in this proof. Using~\eqref{sn.sn} and~\eqref{dxsn}, 
we have
\begin{align*}
L_+\cn&=-4k^2\cn^3,\\
L_+(x\cn_x)&=4k^2\cn^3-2(2k^2-1)\cn,\\
L_+\cn^3&=6k^2\cn^5-8(2k^2-1)\cn^3-6(1-k^2)\cn,\\
  L_+(\hat E(x,k)\cn_x)&=6k^4\cn^5-4k^2(3k^2-2)\cn^3+2(1-4k^2+3k^4)\cn.
\end{align*}
Define
\[
\tilde\phi_1=\left(\hat E(x,k)-\frac{
    E(k)}{K(k)}x\right)\cn_x-k^2\cn^3+{\frac {K(k){k}^{2}- E(k)}{K(k)}}\cn.
\]
Then $\tilde\phi_1$ is periodic (of period $4K$) and verifies
\[
L_+\tilde\phi_1=\left(2(2k^2-1)\frac { E(k)}{K(k)}+2(1-k^2)\right)\cn.
\]
The factor is positive if $2k^2\ge 1$. If $2k^2<1$, it is greater than $2(2k^2-1)+2(1-k^2)=2k^2$.
Define, 
\[
\phi_1=\left(2(2k^2-1)\frac {
    E(k)}{K(k)}+2(1-k^2)\right)^{-1}\tilde \phi_1.
\]
Then
\[
L_+\phi_1=\cn.
\]
As for $L_-$, we have
\begin{align*}
L_-(\cn_x)&=4k^2\cn^2\cn_x,\\
L_-(x\cn)&=-2\cn_x,\\
L_-(\hat E(x,k)\cn)&=-2(1-k^2)\cn_x-4k^2\cn^2\cn_x.
\end{align*}
Define 
\[
\tilde \xi_1=\left(\hat E(x,k)-\frac{
    E(k)}{K(k)}x\right)\cn+\cn_x.
\]
Then $\tilde\xi_1$ is periodic (of period $4K$) and verifies
\[
L_-\tilde\xi_1=\left(-2(1-k^2) +\frac{2
    E(k)}{K(k)}\right)\cn_x.
\]
The factor is positive by~\eqref{eq:2.1}.
Defining 
\[
\xi_1=\left(-2(1-k^2) +\frac{2
    E(k)}{K(k)}\right)^{-1}\tilde\xi_1
\]
we get $L_- \xi_1 = \cn_x$. The last statement of the lemma follows from~\eqref{eq:a-b-cn}.
\end{proof}

\subsection{Orthogonality Properties}

The following lemma records some standard properties
of eigenvalues and eigenfunctions of the linearized operator
$J \mathcal L$, which follow only from the structural 
properties~\eqref{eq:sa} and~\eqref{eq:conjugate}:

\begin{lemma} \label{lemma:basics}
The following properties hold.
\begin{enumerate}
\item (symplectic orthogonality of eigenfunctions)
Let $ f=(f_1,f_2)^T$ and $ g=(g_1,g_2)^T$ be two eigenvectors of $J\mathcal L$
corresponding to eigenvalues $\lambda,\mu\in\mathbb C$. 
Then~\eqref{eq:sa} implies
\[
  \lambda + \bar{\mu} \not=0 \; \implies \;  
\ps{f}{ J g}=\ps{f}{\mathcal L g} = 0,
\]
while~\eqref{eq:conjugate} implies
\[
  \lambda - \bar{\mu} \not=0 \; \implies \;  
\ps{Cf}{ J g} = \ps{Cf}{ \mathcal L g}=0,
\]
so that
\[
  \lambda \pm \bar{\mu} \not=0 \; \implies \;
\ps{f_1}{g_2}=\ps{f_2}{g_1}=0.
\]
\item (unstable eigenvalues have zero energy)
If $J \mathcal L f = \lambda f$, $\lambda \notin i\R$, then~\eqref{eq:sa} implies
\[
  \ps{f}{ \mathcal L f }= 0.
\]
\end{enumerate}
\end{lemma}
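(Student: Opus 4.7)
The plan is to derive both statements directly from the structural identities (\ref{eq:sa}) and (\ref{eq:conjugate}), using the elementary fact that $J^{-1}=-J$ so an eigenvalue equation $J\mathcal L f=\lambda f$ can be rewritten as $\mathcal L f=-\lambda Jf$.

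\smallskip

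For item (1), I first would pair the identities $\mathcal L f=-\lambda J f$ and $\mathcal L g=-\mu J g$ against $g$ and $f$ respectively via the complex inner product, using self-adjointness of $\mathcal L$ and skew-adjointness of $J$. On one side, $(f,\mathcal L g)=(\mathcal L f,g)=-\lambda(Jf,g)=\lambda(f,Jg)$ (using $J^*=-J$), while on the other side $(f,\mathcal L g)=(f,-\mu Jg)=-\bar\mu(f,Jg)$ (the conjugate arising from the complex inner product being conjugate-linear in the second slot). Subtracting gives $(\lambda+\bar\mu)(f,Jg)=0$, so $\lambda+\bar\mu\neq 0$ forces $(f,Jg)=0$, and then immediately $(f,\mathcal L g)=-\bar\mu(f,Jg)=0$. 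Next, to obtain the dual pair of orthogonalities, I would invoke (\ref{eq:conjugate}): since $J\mathcal L(Cf)=-CJ\mathcal L f=-\lambda\,Cf$, the vector $Cf$ is an eigenvector of $J\mathcal L$ with eigenvalue $-\lambda$. Applying the previous step with $f$ replaced by $Cf$ converts the condition $\lambda+\bar\mu\neq 0$ into $-\lambda+\bar\mu\neq 0$, i.e.\ $\lambda-\bar\mu\neq 0$, and yields $(Cf,Jg)=(Cf,\mathcal L g)=0$.

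\smallskip

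For the componentwise orthogonality, I would simply unpack the definitions of $J$ and $C$: writing $f=(f_1,f_2)^T$ and $g=(g_1,g_2)^T$, one has $Jg=(g_2,-g_1)^T$, hence $(f,Jg)=(f_1,g_2)-(f_2,g_1)$ and $(Cf,Jg)=(f_1,g_2)+(f_2,g_1)$. Under the hypothesis $\lambda\pm\bar\mu\neq 0$, both combinations vanish, and solving the resulting $2\times 2$ system gives $(f_1,g_2)=(f_2,g_1)=0$.

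\smallskip

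For item (2), I would specialize item (1) to $g=f$, $\mu=\lambda$. Then $\lambda+\bar\mu=2\,\mathrm{Re}\,\lambda$, which is nonzero precisely because $\lambda\notin i\R$, so the symplectic orthogonality $(f,\mathcal L f)=0$ applies. I do not expect any serious obstacle here: the lemma is purely algebraic, and the only subtle point is tracking the conjugate-linearity of the complex inner product carefully so that the spectral parameters $\lambda,\mu$ appear with the correct conjugations—this is the only place where a sign or conjugate error could sneak in.
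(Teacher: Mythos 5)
Your proof is correct and follows essentially the same route as the paper: the same pairing of the eigenvalue equations through the complex inner product using $\mathcal L^*=\mathcal L$, $J^*=-J=J^{-1}$ to get $(\lambda+\bar\mu)\ps{f}{Jg}=0$, the same use of the anticommutation \eqref{eq:conjugate} to replace $f$ by $Cf$ (flipping $\lambda$ to $-\lambda$), the same unpacking of $\ps{f}{Jg}$ and $\ps{Cf}{Jg}$ into the two component combinations, and the same specialization $g=f$ for item (2). The conjugations are all tracked correctly.
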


\begin{proof} 
We first prove (1). We have
\[
\lambda\ps{f}{Jg}=\ps{\lambda f}{Jg}
 =\ps{J\mathcal L f}{ Jg}
= \ps{\mathcal L f}{g}=\ps{f}{ \mathcal L g}=
  -\ps{f}{ \mu Jg} 
=-\bar\mu\ps{f}{ Jg},
\]
so $(\lambda + \bar{\mu}) \ps{f}{ J g} = 0$ which gives the first
statement.
The second statement follows from the same argument with $f$ replaced by $Cf$, 
while the third statement is a consequence of $\ps{f}{ Jg} = \ps{Cf}{ Jg} = 0$.  

Item (2) is a special case of the first statement of (1), with $g=f$.
\end{proof}

\subsection{Spectral Stability of $\sn$ and $\cn$} 

Our goal in this section is to establish Theorem
\ref{thm:linear-stability}, i.e. to prove the spectral stability of
$\sn$ in $P_{4K}$ for all $k\in(0,1)$, and the spectral stability of
$\cn$ in $P_{4K}$ for all $k\in(0,k_c)$.

We first recall the standard fact that 
\[
  \mbox{orbital stability } \implies \mbox{ spectral stability.}
\] 
Indeed, an eigenvalue $\lambda = \alpha + i \beta$ of $J \mathcal L$ with
$\alpha > 0$ produces a solution of the linearized equation whose magnitude
grows at the exponential rate $e^{\alpha t}$, and this linear growing mode 
(together with its orthogonality properties from Lemma~\ref{lemma:basics}) 
can be used to contradict orbital stability. Rather than go through the nonlinear
dynamics, however, we will give a simple direct proof of spectral stability
in the symmetry subspaces where we have the orbital stability -- that is, in 
$P_{2K}$ for $\dn$, and in $A_{2K}$ for $\cn$ and $\sn$ --
using just the spectral consequences for $L_{\pm}$ implied by the (local) 
minimization properties of these elliptic functions: 
\begin{proposition} \label{prop:orbital-spectral}
For $0 < k < 1$, $K = K(k)$,  $\dn$ is spectrally stable in $P_{2K}$ while $\cn$ and
$\sn$ are spectrally stable in $A_{2K}$. Precisely, 
we have
\[
  \sigma(J \mathcal L^{\dn} |_{P_{2K}}) \subset i \R, \quad
  \sigma(J \mathcal L^{\cn} |_{A_{2K}}) \subset i \R, \quad 
  \sigma(J \mathcal L^{\sn} |_{A_{2K}}) \subset i \R.
\]  
\end{proposition}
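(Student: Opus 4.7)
The plan is to derive spectral stability in each symmetry subspace $S$ ($=P_{2K}$ for $u=\dn$, $=A_{2K}$ for $u=\cn$ or $u=\sn$) directly from Lemma~\ref{lemma:basics}, combined with the non-negativity of $\mathcal L$ on the symplectic orthogonal complement of $\ker(J\mathcal L)$---exactly the coercivity underlying the Grillakis-Shatah-Strauss proof of orbital stability. Suppose toward a contradiction that $J\mathcal L f=\lambda f$ in $(S\otimes\C)^2$ with $\lambda\notin i\R$, so in particular $\lambda\neq 0$. The kernel of $J\mathcal L$ in $S^2$ is $\Span\{(u_x,0),(0,u)\}$ (the two symmetry generators, both lying in $S^2$ by the parities recorded in Figure~\ref{fig:spectra}), and applying Lemma~\ref{lemma:basics}(1) against each kernel element with $\mu=0$ yields the symplectic-orthogonality conditions $\ps{f_1}{u}=0$ and $\ps{f_2}{u_x}=0$. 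Simultaneously, Lemma~\ref{lemma:basics}(2) gives $\ps{f}{\mathcal L f}=0$.

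Writing $f_j=g_j+ih_j$ with $g_j,h_j$ real, and using that $\mathcal L$ is real and self-adjoint,
\[
\ps{f}{\mathcal L f}=\ps{g_1}{L_+g_1}+\ps{h_1}{L_+h_1}+\ps{g_2}{L_-g_2}+\ps{h_2}{L_-h_2}=0,
\]
so the proof reduces to establishing the two non-negativity statements
\[
L_+|_{S\cap\{u\}^\perp}\geq 0\ \text{with kernel}\ \Span\{u_x\},\quad L_-|_{S\cap\{u_x\}^\perp}\geq 0\ \text{with kernel}\ \Span\{u\}.
\]
Indeed, these ensure that each of the four summands is non-negative under the orthogonality conditions derived above, and the sum can then vanish only if each $g_j,h_j$ sits in the corresponding kernel, forcing $f_1\in\C u_x$, $f_2\in\C u$, and hence $f\in\ker(J\mathcal L)$, contradicting $\lambda\neq 0$.

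It remains to check the two non-negativities, which can be read off Figure~\ref{fig:spectra}. For $\dn$ and $\cn$ the situation is standard: $L_-$ is unconditionally $\geq 0$ on $S$ with kernel $\Span\{u\}$, while $L_+$ has a single negative eigendirection (namely $\chi_-$ for $\dn$, $\sn_x$ for $\cn$) pairing non-trivially with $u$ (of one sign in both cases), so the mass-type constraint $f_1\perp u$ removes it in the usual GSS manner. The main obstacle is the case $u=\sn$: here $L_-^{\sn}$ genuinely has a negative eigenfunction $\cn$ on $A_{2K}$, and $\cn\perp\sn$ by parity, so the mass constraint alone is powerless. What saves the argument is the translation symmetry: the companion symplectic-orthogonality condition $f_2\perp\sn_x$ does remove $\cn$ (since $\ps{\cn}{\sn_x}\neq 0$), and the precise statement $L_-^{\sn}|_{A_{2K}\cap\{\sn_x\}^\perp}\geq 0$ with kernel $\Span\{\sn\}$ is exactly items (2) and (4) of Lemma~\ref{lemma:sn-coercivity} assembled via the $A_{2K}^+\oplus A_{2K}^-$ decomposition; items (1) and (3) similarly yield $L_+^{\sn}|_{A_{2K}}\geq 0$ with kernel $\Span\{\sn_x\}$, so in this case the mass constraint on $f_1$ is not even needed.
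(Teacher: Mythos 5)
Your overall strategy is sound and close in spirit to the paper's: the paper also reduces everything to the two coercivity statements ($L_+\geq 0$ on $\{u\}^\perp$ and $L_-\geq 0$, with roles swapped for $\sn$), though it then concludes via the conjugated operator $L_-^{1/2}L_+L_-^{1/2}$ acting on $f_1$ rather than via your combination of $\ps{f}{\mathcal L f}=0$ with symplectic orthogonality to the kernel. Your treatment of the $\sn$ case is correct and matches the paper's, resting on Lemma~\ref{lemma:sn-coercivity}. (A minor expository point: the null space of the restricted quadratic form $\ps{\cdot}{L_+\cdot}|_{\{u\}^\perp}$ is a priori $\{v\perp u: L_+v\in\Span\{u\}\}$, which could exceed $\Span\{u_x\}$; your contradiction still closes because feeding $L_+f_1\in\Span_{\C}\{u\}$ and $L_-f_2\in\Span_{\C}\{u_x\}$ back into $J\mathcal L f=\lambda f$ with $\lambda\neq 0$ forces $f=0$.)

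The genuine gap is in your justification of $L_+|_{S\cap\{u\}^\perp}\geq 0$ for $\dn$ and $\cn$. You argue that the unique negative eigendirection ($\chi_-$, resp.\ $\sn_x$) ``pairs non-trivially with $u$,'' so the constraint $f_1\perp u$ ``removes it in the usual GSS manner.'' Non-vanishing of $\ps{\chi_-}{u}$ only shows that the negative eigenfunction itself does not lie in $\{u\}^\perp$; it does \emph{not} imply that the quadratic form is non-negative there. (Already in two dimensions: for $L=\mathrm{diag}(-1,\eps)$ and $u=(1,1)$ the negative eigenvector pairs non-trivially with $u$, yet $Q|_{u^\perp}<0$ for small $\eps$.) The ``usual GSS manner'' requires in addition the sign condition $\ps{L_+^{-1}u}{u}\leq 0$, which you neither state nor verify, and which is a nontrivial elliptic-integral computation for $\cn$ (compare the quantity $A_1=\ps{\phi_1}{\cn}$ appearing in Section~\ref{sec:instability}). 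The paper sidesteps this entirely: because $\dn$ and $\cn$ are \emph{global} constrained minimizers of $\mathcal E$ at fixed mass in $P_{2K}$ and $A_{2K}$ respectively (Propositions~\ref{prop:focusing-P_T} and~\ref{prop:focusing-A}), the second variation $L_+$ is automatically $\geq 0$ on the tangent space $\{u\}^\perp$ of the constraint manifold. Citing those propositions (or otherwise verifying the sign condition) is what your argument is missing; with that input supplied, the rest of your proof goes through.
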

\begin{proof}
Begin with $\dn$ in $P_{2K}$. From Figure~\ref{fig:spectra}, we see 
$L_-^{\dn} |_{\dn^\perp} > 0$, and thus $(L_-^{\dn})^{\pm 1/2}$ exist 
on $\dn^\perp$. It follows from the minimization property Proposition~\ref{prop:focusing-P_T} that on $\dn^{\perp}$, $L_+^{\dn} \geq 0$ (otherwise
there is a perturbation of $\dn$ lowering the energy while preserving the mass).
Suppose $J \mathcal L^{\dn} f = \lambda f$, $\lambda \not\in i\R$.
Then $L_-^{\dn} L_+^{\dn} f_1 = -\lambda^2 f_1$. 
Since
  $(\dn,0)^T$ is an eigenvector of $J\mathcal L$ for the eigenvalue
  $0$, Lemma~\ref{lemma:basics} implies
$f_1 \perp \dn$. Therefore, we have 
\[
  (L_-^{\dn})^{1/2} L_+^{\dn} (L_-^{\dn})^{1/2} \left( (L_-^{\dn})^{-1/2} f_1 \right)
  = -\lambda^2 \left( (L_-^{\dn})^{-1/2} f_1 \right)
\]
and on $\dn^{\perp}$,
\[
  L_+ \geq 0 \; \implies \; L_-^{1/2} L_+ L_-^{1/2} \geq 0 \; \implies \;
  \lambda^2 \leq 0
\]
contradicting $\lambda \notin i\R$.

Next, consider $\cn$ in $A_{2K}$. Again from Figure~\ref{fig:spectra}, we 
see $L_-^{\cn} |_{\cn^\perp} > 0$, while the minimization property
Proposition~\ref{prop:focusing-A} implies that $L_+^{\cn} \geq 0$ on $\cn^\perp$,
and so the spectral stability follows just as for $\dn$ above.

Finally, consider $\sn$ in $A_{2K}$. 
By Lemma~\ref{lemma:sn-coercivity}, $L_+^{\sn} > 0$
on $\{(\sn)_x\}^\perp$, while $L_-^{\sn} \geq 0$ on 
$\{(\sn)_x\}^\perp$, and so the spectral stability follows from the 
same argument as above, with the roles of $L_+$ and $L_-$ reversed.
\end{proof}

Moreover, both $\sn$ and $\cn$ are spectrally stable in $P_{2K}^-$:
\begin{lemma}
For $0 < k < 1$, $K = K(k)$,
\[
  \sigma(J \mathcal L^{\cn} |_{P_{2K}^-}) \subset i \R, \quad 
  \sigma(J \mathcal L^{\sn} |_{P_{2K}^-}) \subset i \R.
\]
\end{lemma}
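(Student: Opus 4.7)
The plan is to mirror the strategy of Proposition~\ref{prop:orbital-spectral}: I would first show that $\mathcal L^{\sn}$ and $\mathcal L^{\cn}$ are strictly positive on $(P_{2K}^-)^2$, and then conclude by Lemma~\ref{lemma:basics}(2) that no eigenvalue of $J\mathcal L$ can lie off the imaginary axis. As a preliminary, one should verify the invariance of $P_{2K}^-$ under $J\mathcal L^{\sn}$ and $J\mathcal L^{\cn}$, which is immediate from $\sn^2,\cn^2\in P_{2K}^+$, so that multiplication by $u^2$ preserves $P_{2K}^-$.

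The first step is to identify each of $L_\pm^{\sn}|_{P_{2K}^-}$ and $L_\pm^{\cn}|_{P_{2K}^-}$ as strictly positive. For this I would read off Figure~\ref{fig:spectra} and invoke the Sturm-Liouville zero count: the listed eigenfunctions exhaust all non-positive eigenvalues on $P_{4K}$. Specifically, none of the low eigenfunctions of $L_-^{\sn}$ (equivalently, of the constant shift $L_-^{\cn}=L_-^{\sn}+k^2$) lies in $P_{2K}^-$, so the corresponding restricted spectra sit strictly above $0$; and the lowest eigenvalue of $L_+^{\sn}|_{P_{2K}^-}$ is $3$, attained at $\dn_x$, which shifts to $3-3k^2>0$ for $L_+^{\cn}|_{P_{2K}^-}$ since $k\in(0,1)$.

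The second step is the coercivity conclusion: if $J\mathcal L f=\lambda f$ with $f=(f_1,f_2)^T\in(P_{2K}^-)^2$ nonzero and $\lambda\notin i\R$, then Lemma~\ref{lemma:basics}(2) gives $\ps{f}{\mathcal L f}=0$, whereas the positivity of $L_\pm$ forces
\[
  \ps{f}{\mathcal L f}=\ps{f_1}{L_+ f_1}+\ps{f_2}{L_- f_2}>0,
\]
the desired contradiction. Applied to both $\sn$ and $\cn$, this yields $\sigma(J\mathcal L^{\sn}|_{P_{2K}^-})\subset i\R$ and $\sigma(J\mathcal L^{\cn}|_{P_{2K}^-})\subset i\R$.

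The only delicate point is ensuring that Figure~\ref{fig:spectra} really catalogues every non-positive eigenvalue of $L_\pm^{\sn}$ (hence of the shifts $L_\pm^{\cn}$). The Sturm-Liouville ordering $\lambda_0<\lambda_1\leq\lambda_2<\lambda_3\leq\lambda_4<\cdots$ together with the zero counts ($\psi_0$ nonvanishing, $\psi_{2m+1}$ and $\psi_{2m+2}$ each with $2m+2$ zeros) shows that the three tabulated eigenfunctions of $L_-$ and the five of $L_+$ really are the first three and first five eigenvalues respectively; any later eigenvalue is strictly greater than the largest listed one, hence strictly positive after restriction to $P_{2K}^-$. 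This bookkeeping is the only obstacle and it is routine.
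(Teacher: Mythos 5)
Your proposal matches the paper's proof exactly: the paper likewise deduces the result as an immediate consequence of the strict positivity of $\mathcal L^{\sn}$ and $\mathcal L^{\cn}$ on $P_{2K}^-$ (read off from Figure~\ref{fig:spectra}, whose completeness rests on the same Sturm--Liouville zero-counting you describe) combined with Lemma~\ref{lemma:basics}(2). The argument is correct, including the verification that the only low-lying eigenfunction in $P_{2K}^-$ is $\dn_x$ with eigenvalue $3$ (resp.\ $3-3k^2>0$ after the shift to $L_+^{\cn}$).
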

\begin{proof}
This is an immediate consequence of the positivity of $\mathcal L^{\sn}$
and $\mathcal L^{\cn}$ on $P_{2K}^-$ (see Figure~\ref{fig:spectra}),
and Lemma~\ref{lemma:basics}.
\end{proof}
So in light of~\eqref{eq:spec-decomp}, to prove Theorem~\ref{thm:linear-stability},
it remains only to show $\sigma(J \mathcal L |_{P_{2K}^+}) \subset i \R$
for each of $\cn$ and $\sn$.

This will follow from a simplified version of a general result for infinite dimensional
Hamiltonian systems (see \cite{HaKa08,KaKeSa04,KaKeSa05})
relating coercivity of the linearized energy with the number of
eigenvalues with negative Krein signature of the linearized operator 
$J\mathcal L$ of the form~\eqref{eq:lin-form}:

\begin{lemma}[coercivity lemma] \label{lem:abstract-coercivity}
Consider $J\mathcal L$ on $S \times S$ for some invariant
subspace $S \subset P_T$, and suppose it has an eigenvalue whose 
eigenfunction $\xi = (\xi_1, \xi_2)^T$ has negative (linearized) energy:
\[
  J \mathcal L \xi = \mu \xi, \quad \scalar{\xi}{\mathcal L\xi} < 0.
\]
Then the following results hold.
\begin{enumerate}
\item
If $L_+$ has a one-dimensional negative subspace (in $S$):
\begin{equation} \label{eq:L+neg}
  L_+ f = -\lambda f, \quad \lambda > 0, \quad L_+ |_{f^\perp} > 0
\end{equation}
Then $L_+ |_{\xi_2^\perp} > 0$.
\item
If $L_-$ has a one-dimensional negative subspace (in $S$):
\begin{equation} \label{eq:L-neg}
  L_- g = -\nu g, \quad \nu > 0, \quad L_- |_{g^\perp} > 0
\end{equation}
Then $L_- |_{\xi_1^\perp} > 0$.
\item
If both~\eqref{eq:L+neg} and~\eqref{eq:L-neg} hold, then
$\sigma(J\mathcal L|_{S\times S}) \subset i \R$. 
\end{enumerate}
\end{lemma}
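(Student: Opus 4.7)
The plan is to establish (1) and (2) in parallel by a Sylvester-style reduction linking the coercivity of $L_+$ on $\xi_2^\perp$ (respectively of $L_-$ on $\xi_1^\perp$) to the sign of $\ps{\xi_2}{L_+^{-1}\xi_2}$ (respectively of $\ps{\xi_1}{L_-^{-1}\xi_1}$), and then to combine the resulting coercivity with the orthogonality and energy identities of Lemma~\ref{lemma:basics} to deduce (3).

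For (1), I first observe that the hypothesis~\eqref{eq:L+neg} forces $\ker L_+=\{0\}$, so $L_+$ is invertible on $S$. I next pin down $\mu$: Lemma~\ref{lemma:basics}(2) rules out any $\mu$ with $\Re\mu\neq 0$ because $\ps{\xi}{\mathcal L\xi}\neq 0$, while $\mu=0$ would give $L_+\xi_1=-\mu\xi_2=0$, hence $\xi_1=0$ by invertibility, and thus $\ps{\xi}{\mathcal L\xi}=\ps{\xi_2}{L_-\xi_2}=\ps{\xi_2}{\mu\xi_1}=0$, a contradiction. Therefore $\mu=i\omega$ for some $\omega\in\R\setminus\{0\}$, and from $L_+\xi_1=-\mu\xi_2$ one has $L_+^{-1}\xi_2=-\mu^{-1}\xi_1$. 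A direct computation, using that $\ps{\xi_2}{L_+^{-1}\xi_2}$ is real by self-adjointness, gives
\[
\ps{\xi_2}{L_+^{-1}\xi_2}=\omega^{-1}\Im\ps{\xi_2}{\xi_1},\qquad \ps{\xi}{\mathcal L\xi}=2\omega\,\Im\ps{\xi_2}{\xi_1},
\]
so $\ps{\xi_2}{L_+^{-1}\xi_2}=(2\omega^2)^{-1}\ps{\xi}{\mathcal L\xi}<0$. Setting $g=L_+^{-1}\xi_2$, the relation $\ps{g}{\xi_2}=\ps{\xi_2}{L_+^{-1}\xi_2}\neq 0$ allows one to decompose each $\eta\in S$ uniquely as $\eta=\alpha g+\eta'$ with $\eta'\perp\xi_2$, and a short calculation using self-adjointness yields
\[
\ps{\eta}{L_+\eta}=|\alpha|^2\,\ps{\xi_2}{L_+^{-1}\xi_2}+\ps{\eta'}{L_+\eta'}.
\]
Since $L_+$ has Morse index one and the $\Span(g)$ summand already contributes a negative direction, the form on $\xi_2^\perp$ must be non-negative; strict positivity for $\eta'\neq 0$ follows from the Lagrange condition $L_+\eta'=c\,\xi_2$, which forces $\eta'\in\Span(g)\cap\xi_2^\perp=\{0\}$ using $\ker L_+=\{0\}$. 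Part (2) is obtained by interchanging the roles of $L_+,\xi_1$ and $L_-,\xi_2$.

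For (3), suppose toward contradiction that $J\mathcal L h=\lambda h$ for some $\lambda\notin i\R$ and $h=(h_1,h_2)^T\neq 0$. Since $\mu\in i\R$, both $\lambda+\bar\mu$ and $\lambda-\bar\mu$ have nonzero real part equal to $\Re\lambda$, so the third implication in Lemma~\ref{lemma:basics}(1) yields $\ps{h_1}{\xi_2}=\ps{h_2}{\xi_1}=0$; equivalently $h_1\in\xi_2^\perp$ and $h_2\in\xi_1^\perp$. Parts (1) and (2) then give $\ps{h_1}{L_+h_1}\geq 0$ and $\ps{h_2}{L_-h_2}\geq 0$, each strict unless the corresponding component vanishes. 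On the other hand Lemma~\ref{lemma:basics}(2) yields $0=\ps{h}{\mathcal L h}=\ps{h_1}{L_+h_1}+\ps{h_2}{L_-h_2}$, so both summands must vanish, forcing $h_1=h_2=0$ and contradicting $h\neq 0$.

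I expect the main obstacle to be the careful bookkeeping of complex conjugates in the identity $\ps{\xi_2}{L_+^{-1}\xi_2}=(2\omega^2)^{-1}\ps{\xi}{\mathcal L\xi}$, together with the upgrade from non-negativity to strict positivity in the Sylvester decomposition. The latter is really where the structural input $\ker L_+=\{0\}$ is used, via the Lagrange step that rules out a nontrivial zero of the quadratic form on $\xi_2^\perp$; once this is in place, the deduction of (3) is a direct assembly of the preceding lemmas.
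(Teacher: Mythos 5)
Your proof is correct, and parts (1)--(2) take a genuinely different route from the paper's, while part (3) is essentially identical. The paper works directly with the negative eigenfunction $f$ of $L_+$: it decomposes $h=\alpha f+h_+$ and $\xi_1=\beta f+\xi_+$ with $h_+,\xi_+\perp f$, uses the eigenvalue equation to get $0=\ps{h}{L_+\xi_1}=-\lambda\alpha\beta+\ps{h_+}{L_+\xi_+}$, and applies Cauchy--Schwarz for $(L_+|_{f^\perp})^{1/2}$ to the cross term, concluding from $\ps{\xi_1}{L_+\xi_1}=\tfrac12\ps{\xi}{\mathcal L\xi}<0$ that $\ps{h}{L_+h}>0$. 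You instead decompose along $g=L_+^{-1}\xi_2$, compute the Vakhitov--Kolokolov-type quantity $\ps{\xi_2}{L_+^{-1}\xi_2}=(2\omega^2)^{-1}\ps{\xi}{\mathcal L\xi}<0$ (which is just $\omega^{-2}\ps{\xi_1}{L_+\xi_1}$, so the two proofs hinge on the same sign condition), and then run a Sylvester/index-count: the block-diagonal splitting $\ps{\eta}{L_+\eta}=|\alpha|^2\ps{\xi_2}{L_+^{-1}\xi_2}+\ps{\eta'}{L_+\eta'}$ forces non-negativity on $\xi_2^\perp$ since the Morse index is one, and the semi-definite Cauchy--Schwarz (your ``Lagrange'') step upgrades this to strict positivity via $\ker L_+=\{0\}$. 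Both arguments are sound; the paper's is more self-contained (a single explicit Cauchy--Schwarz, no inversion of $L_+$ and no appeal to min--max), whereas yours makes the standard negative-index-reduction mechanism transparent and generalizes more readily to higher Morse index. Your part (3) — symplectic orthogonality from Lemma~\ref{lemma:basics}(1) since $\mu\in i\R$ forces $\Re(\lambda\pm\bar\mu)=\Re\lambda\neq 0$, followed by the zero-energy identity of Lemma~\ref{lemma:basics}(2) — is the paper's argument verbatim, with the useful extra remark that a nonzero unstable eigenfunction must have both components nonzero.
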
\begin{proof}
First note that by Lemma~\ref{lemma:basics} (2), $0 \not= \mu \in i\R$,
and writing $\mu = i \gamma$, $0 \not= \gamma \in \R$, we have
$L_- \xi_2 = i \gamma \xi_1$, $L_+ \xi_1 = - i \gamma \xi_2$.

Moreover,
\[
\ps{\xi_1}{ L_+ \xi_1} = -\gamma \ps{\xi_1}{i\xi_2} = 
  \gamma \ps{i\xi_1}{\xi_2}  =\ps{L_-\xi_2}{\xi_2},
\] 
so by assumption $\ps{\xi_1}{ L_+ \xi_1}= \ps{\xi_2}{ L_- \xi_2}< 0$.

We prove (1). For any $h \perp \xi_2$, decompose
\[
  h = \alpha f + h_+, \quad  \xi_1 = \beta f + \xi_+, \quad 
  h_+ \perp f, \quad  \xi_+ \perp f,
\]
where we may assume $\alpha \ge 0$ and $\beta \ge 0$. We have
\[
  0 = i \gamma \ps{h}{\xi_2} = \ps{h}{-i \gamma \xi_2} = \ps{h}{L_+ \xi_1}
  = -\lambda \alpha \beta  + \ps{h_+}{L_+ \xi_+}.
\]
Thus, using $L_+ |_{f^\perp} >0 $, $L_+^{1/2} = (L_+ |_{f^\perp})^{1/2}$ 
is well defined on $f^\perp$ and
\begin{align*}
(\alpha\beta \lambda )^2 &= \ps{h_+}{L_+ \xi_+}^2 
= \ps{L_+^{1/2} h_+}{L_+^{1/2} \xi_+}^2  \\
& \le \ps{h_+}{L_+ h_+} \ps{\xi_+ }{L_+ \xi_+} \\
&=  (\ps{h}{L_+ h} + \alpha^2 \lambda) (\ps{\xi_1}{L_+ \xi_1} +\beta^2 \lambda)
\end{align*}
with both factors on the right $> 0$.  
Since $\ps{\xi_1}{L_+ \xi_1} < 0$, we must have $\ps{h}{L_+ h}>0$.

Statement (2) follows in exactly the same way, with the roles
of $L_+$ and $L_-$ reversed, the roles of $\xi_1$ and $\xi_2$ reversed,
and with $g$ and $\nu$ replacing $f$ and $\lambda$.  

Finally, for (3), suppose $J \mathcal L \eta = \zeta \eta$. 
If $\zeta \notin i\R$, then by Lemma~\ref{lemma:basics} (1), 
$\ps{\xi_1}{ \eta_2} = \ps{\xi_2} {\eta_1} = 0$, and so by parts (1) and (2),
\[
  \ps{\eta_1}{ L_+ \eta_1} > 0, \quad
  \ps{\eta_2}{ L_+ \eta_2} > 0, \quad
  \implies \ps{\eta }{ \mathcal L \eta} > 0,
\] 
contradicting Lemma~\ref{lemma:basics} (2). Thus $\zeta \in i\R$.
\end{proof}

\begin{proof}[Proof of Theorem~\ref{thm:linear-stability}]
Begin with $\sn$ in $P_{2K}^+$. From Figure~\ref{fig:spectra} it is clear 
that in $P_{2K}^+$, condition~\eqref{eq:L+neg} holds for $L_+^{\sn}$
and~\eqref{eq:L-neg} holds for $L_-^{\sn}$. 
Explicit computation yields 
\[
 L_+^{\sn}(\dn^2+k^2\cn^2)=-(1-k^2)^2,\quad L_-^{\sn} 1=-(\dn^2+k^2\cn^2),
\]
which implies 
\[
J\mathcal L^{\sn}\binom {\dn^2+k^2\cn^2}{i(1-k^2)}=i(1-k^2)\binom {\dn^2+k^2\cn^2}{i(1-k^2)}.
\]
Moreover, 
\begin{multline*}
\dual{\mathcal L^{\sn}\binom {\dn^2+k^2\cn^2}{i(1-k^2)}}{\binom
  {\dn^2+k^2\cn^2}{i(1-k^2)}}\\=
\dual{L_+^{\sn} (\dn^2+k^2\cn^2) }{\dn^2+k^2\cn^2}+
 (1-k^2)\dual{L_-^{\sn} 1}{1}\\=
-((1-k^2)^2+(1-k^2)) \dual{1 }{(\dn^2+k^2\cn^2)}\\
=-((1-k^2)^2+(1-k^2))  (4E(k)-2(1-k^2)K(k))<0,
\end{multline*}
by \eqref{eq:2.1}.
Hence all the conditions of Lemma~\ref{lem:abstract-coercivity} are
verified for $\sn$ in $P_{2K}^+$, and so we conclude
$\sigma(J \mathcal L^{\sn} |_{P_{2K}^+}) \subset i \R$, as required.

Next we turn to $\cn$. Again from Figure~\ref{fig:spectra} it is clear 
that in $P_{2K}^+$, condition~\eqref{eq:L+neg} holds for $L_+^{\cn}$
and~\eqref{eq:L-neg} holds for $L_-^{\cn}$. Explicit computation yields
\[
 L_+^{\cn}(-\dn^2+k^2\sn^2)=1,\quad L_-^{\cn}1=-\dn^2+k^2\sn^2,
\]
which implies 
\[
J\mathcal L^{\cn}\binom {-\dn^2+k^2\sn^2}{i}=i\binom {-\dn^2+k^2\sn^2}{i}.
\]
Moreover, when $k<k_c$, we have
\[
\dual{\mathcal L^{\cn}\binom {-\dn^2+k^2\sn^2}{i}}{\binom
  {-\dn^2+k^2\sn^2}{i}}
=
2\dual{L_-^{\sn} 1}{1}
=
4K(k)-8E(k)<0.
\]
Hence the conditions of Lemma~\ref{lem:abstract-coercivity} are
verified for $\cn$ in $P_{2K}^+$ when $k<k_c$, yielding
$\sigma(J \mathcal L^{\cn} |_{P_{2K}^+}) \subset i \R$, as required.
\end{proof}


\section{Linear Instability}
\label{sec:instability}

Theorem~\ref{thm:linear-stability} (and Proposition~\ref{prop:orbital-spectral})
give the spectral
stability of the periodic waves
$\dn$, $\sn$, and $\cn$ (at least for $k < k_c$) against perturbations which
are periodic with their fundamental period. It is also natural to ask if this stability
is maintained against perturbations whose period is a \emph{multiple} of the 
fundamental period. In light of Bloch-Floquet theory, this question is also relevant
for stability against \emph{localized} perturbations in $L^2(\R)$.

\subsection{Theoretical Analysis}

It is a simple observation that $\dn$ immediately becomes unstable against 
perturbations with twice its fundamental period:
\begin{proposition}
Both $\sigma(J \mathcal L^{\dn} |_{A^+_{2K}})$ and 
$\sigma(J \mathcal L^{\dn} |_{A^-_{2K}})$ contain a pair of non-zero real eigenvalues.
In particular $\dn$ is linearly
unstable against perturbations in $P_{4K}$.
\end{proposition}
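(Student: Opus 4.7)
The plan is to exhibit, in each of the two invariant subspaces $A_{2K}^+$ and $A_{2K}^-$ of $P_{4K}$, a positive real eigenvalue of $J\mathcal L^{\dn}$, via the standard recipe: when $L_-$ is positive definite and $L_+$ has a negative direction on a common invariant subspace, the product $L_- L_+$ admits a negative eigenvalue, which generates a pair $\pm\lambda$ of real non-zero eigenvalues of $J \mathcal L$.

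The first step reads off the relevant spectral data from Figure~\ref{fig:spectra}. On $A_{2K}^+$, $L_+^{\dn}$ has negative eigenvalue $-3$ with eigenfunction $\sn_x=\cn\cdot\dn$, while $L_-^{\dn}$ admits $\cn$ as its ground state with eigenvalue $1-k^2>0$, so $L_-^{\dn}|_{A_{2K}^+}$ is strictly positive. On $A_{2K}^-$, $L_+^{\dn}$ has negative eigenvalue $3k^2-3$ with eigenfunction $\cn_x$, and $L_-^{\dn}$ has $\sn$ as an eigenfunction with eigenvalue $1$. To see this is the ground state (so that $L_-^{\dn}|_{A_{2K}^-}\geq 1$), I observe that every $f\in A_{2K}^-$ vanishes at $0$ and $2K$, making the eigenvalue problem equivalent to a Dirichlet problem on $[0,2K]$; since $\sn>0$ on $(0,2K)$, Sturm--Liouville theory identifies it as the ground state.

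The second step is the square-root construction. Fix $S\in\{A_{2K}^+,A_{2K}^-\}$; positivity of $L_-^{\dn}$ makes the self-adjoint operator $B_S:=(L_-^{\dn})^{1/2} L_+^{\dn} (L_-^{\dn})^{1/2}$ well-defined on $S$. Taking $f\in S$ to be the $L_+^{\dn}$-eigenfunction with negative eigenvalue $-\mu$ identified above and setting $g=(L_-^{\dn})^{-1/2} f\in S$ gives
\[
\ps{g}{B_S g}=\ps{f}{L_+^{\dn} f}=-\mu\norm{f}_{L^2}^2<0,
\]
so $B_S$ has a negative eigenvalue $-\lambda_S^2$ with eigenvector $\psi\in S$. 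Since $B_S$ is similar to $L_-^{\dn} L_+^{\dn}$ via the transformation $\phi=(L_-^{\dn})^{1/2}\psi\in S$, one gets $L_-^{\dn} L_+^{\dn}\phi=-\lambda_S^2\phi$, and then $(h_1,h_2)=(\phi,\,-\lambda_S^{-1} L_+^{\dn}\phi)\in S\times S$ solves $J\mathcal L^{\dn}(h_1,h_2)^T=\lambda_S(h_1,h_2)^T$. The symmetry $\lambda\mapsto-\lambda$ of $\sigma(J\mathcal L^{\dn})$ coming from~\eqref{eq:real} and~\eqref{eq:conjugate} provides the companion eigenvalue $-\lambda_S$, and since $A_{2K}^\pm\subset P_{4K}$ this proves the stated linear instability.

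The only subtle point is the identification of $\sn$ as the ground state of $L_-^{\dn}$ on $A_{2K}^-$, which reduces to the Dirichlet eigenvalue observation above; everything else is immediate from Figure~\ref{fig:spectra} and the quadratic-form argument, and in particular no delicate Krein-signature bookkeeping is required since we are building instability, not stability.
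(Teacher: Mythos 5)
Your proof is correct and follows essentially the same route as the paper's: read off from Figure~\ref{fig:spectra} that $L_-^{\dn}>0$ and $L_+^{\dn}$ has a negative direction in each of $A_{2K}^{+}$ and $A_{2K}^{-}$, conjugate by $(L_-^{\dn})^{1/2}$ to turn that negative direction into a negative eigenvalue of the product operator, and convert it into a pair $\pm\lambda$ of real eigenvalues of $J\mathcal L^{\dn}$. The only cosmetic differences are that you work with $L_-^{\dn}L_+^{\dn}$ rather than $L_+^{\dn}L_-^{\dn}$ (so your eigenvector is $(\phi,-\lambda^{-1}L_+^{\dn}\phi)^T$ instead of $(L_-^{\dn}h,\pm\mu h)^T$), and that you supply an extra Dirichlet-problem argument for the positivity of $L_-^{\dn}$ on $A_{2K}^{-}$, which already follows from the Sturm--Liouville ordering recorded in Figure~\ref{fig:spectra}.
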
  
\begin{proof}
In each of $A_{2K}^+$ and $A_{2K}^-$, $L^{\dn}_- > 0$ while
$L^{\dn}_+$ has a negative eigenvalue: $L^{\dn}_+ f = -\lambda f$,
$\lambda > 0$.
So the self-adjoint operator $(L^{\dn}_-)^{1/2} L_+^{\dn} (L_-^{\dn})^{1/2}$
has a negative direction,
\[
  \left( (L_-^{\dn})^{-1/2} f, \; ((L_-^{\dn})^{1/2} L_+^{\dn} (L_-^{\dn})^{1/2}) 
  (L_-^{\dn})^{-1/2} f \right) = -\lambda ( f, \; f) < 0,
\]
hence a negative eigenvalue 
$(L_-^{\dn})^{1/2} L_+^{\dn} (L_-^{\dn})^{1/2} g = - \mu^2 g$, $\mu > 0$.
Setting $h := (L_-^{\dn})^{-1/2} g$, 
$h \in A_{2K}^+$ ($A_{2K}^-$),
we see 
\[
  L_+^{\dn} L_-^{\dn} h = - \mu^2 h \quad \implies \quad
 J \mathcal L^{\dn} \binom{L_-^{\dn} h}{\pm \mu h} =  
 \pm \mu \binom{L_-^{\dn} h}{\pm \mu h}.
\] 
Hence $\mu,-\mu\in\R$ are eigenvalues of $J\mathcal L$ in $A_{2K}^+$
($A_{2K}^-$).
\end{proof}

\begin{remark}
The proof shows $\dn$ is unstable in $P_{2nK}$ for every even $n$
since $h \in P_{2nK}$. In fact, $\dn$ 
is unstable in any
    $P_{2nK}$, $n \ge 2$. Indeed, we always have $L_-\dn=0$, thus by
    Sturm-Liouville Theory (see e.g.~\cite[Theorem 3.1.2]{Ea73}), $0$
    is always the first simple eigenvalue of $L_-$ in
    $P_{2nK}$. Moreover, $L_+\dn_x=0$, and $\dn_x$ has $2n$ zeros in
    $P_{2nK}$. Hence there are at least $2n-2$ negative eigenvalues
    for $L_+$ in $P_{2nK}$. With the above argument, this proves
  linear instability in $P_{2nK}$ for any $n \ge 2$.
\end{remark}

For $\sn$, the $H^2(\R)$ orbital stability result of \cite{BoDeNi11,GalPel15} implies 
spectral stability against perturbations which are periodic with \emph{any} multiple 
of the fundamental period.

Using formal perturbation theory, \cite{Row74} showed that
$\cn$ becomes unstable against perturbations which are periodic with
period a sufficiently large multiple of the fundamental period. Our main goal 
in this section is to make this rigorous:
\begin{theorem}\label{thm:instability}
For $0<k<1$,
there exists $n_1=n_1(k)\in\mathbb N$ such that $\cn$ is linearly unstable in $P_{4nK}$ for $n \ge n_1$, i.e.,
the spectrum of $J\mathcal L^{\cn}$
as an operator on $ P_{4nK}$ contains an eigenvalue with
positive real part. 
\end{theorem}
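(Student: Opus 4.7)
The natural approach is via Bloch--Floquet decomposition combined with perturbation theory around the zero eigenvalue of $J\mathcal L^{\cn}$ on $P_{4K}$. Since $P_{4nK}=\bigoplus_{j=0}^{n-1}P_{4K}(e^{2\pi ij/n})$ and each summand is isometric to $P_{4K}$ via $f\mapsto e^{i\xi_j x}f$ with $\xi_j=\pi j/(2nK)$, the operator $J\mathcal L^{\cn}$ on $P_{4nK}$ is unitarily equivalent to the direct sum of the Bloch-transformed operators $\mathcal A(\xi_j):=J\mathcal L^{\cn}(\xi_j)$ on $P_{4K}$, where $\mathcal L(\xi)$ is obtained from $\mathcal L$ by replacing $L_\pm$ with $L_\pm-2i\xi\partial_x+\xi^2$. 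To prove instability in $P_{4nK}$ for all sufficiently large $n$, it suffices to produce $\xi_0>0$ such that $\mathcal A(\xi)$ has an eigenvalue with strictly positive real part for every $\xi\in(0,\xi_0)$; the conclusion then follows by choosing $n_1$ so that $\pi/(2n_1K)<\xi_0$.

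The next step is to analyze $\mathcal A(\xi)$ as an analytic perturbation of $\mathcal A(0)=J\mathcal L^{\cn}$. By the earlier spectral description, $\ker\mathcal A(0)$ in $P_{4K}$ is two-dimensional, spanned by $e_1=(\cn_x,0)^T$ and $e_2=(0,\cn)^T$. Using Lemma~\ref{lem:explicit}, the identities $L_+^{\cn}\phi_1=\cn$ and $L_-^{\cn}\xi_1=\cn_x$ provide two Jordan chains, so the generalized kernel is four-dimensional and spanned by $e_1,e_2,e_3=(0,\xi_1)^T,e_4=(-\phi_1,0)^T$ with $\mathcal A(0)e_3=e_1$ and $\mathcal A(0)e_4=e_2$. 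I would apply a Lyapunov--Schmidt reduction: use the Riesz projector onto this four-dimensional generalized eigenspace, together with the explicit inverse of $J\mathcal L^{\cn}$ on the complementary range (computed using the bounded inverses of $L_\pm^{\cn}$ on the orthogonal complements of their kernels), to reduce the eigenvalue equation $\mathcal A(\xi)u=\lambda u$ to a finite-dimensional problem.

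I then set up a formal expansion $u(\xi)=u_0+\xi u_1+\xi^2 u_2+\cdots$ with $u_0=ae_1+be_2$ and $\lambda(\xi)=\xi\lambda_1+\xi^2\lambda_2+\cdots$, and match orders. At each order the solvability condition is orthogonality to $\ker(J\mathcal L)^*=J^{-1}\ker\mathcal L=\mathrm{span}\{(\cn,0)^T,(0,\cn_x)^T\}$. At order $\xi^1$ both solvability conditions are automatic (by parity of $\cn$, $\cn_x$), so $u_1$ can be solved explicitly in terms of $\phi_1,\xi_1,\cn$ using the relation $L_+^{\cn}(\tfrac12\cn-(1-2k^2)\phi_1)=\cn_{xx}$ from Lemma~\ref{lem:explicit}. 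The two nontrivial solvability relations appear at order $\xi^2$ and yield a $2\times2$ linear system $M(k)\binom{a}{b}=\lambda_1^2\binom{a}{b}$ (up to $O(\xi)$ corrections absorbed into $\lambda_2$) whose entries are inner products such as $(\phi_1,\cn)$, $(\xi_1,\cn_x)$, $(\cn,\cn)$, and $(\cn_x,\cn_x)$. Each of these reduces, via the formulas of Section~\ref{ssec:elliptic} and the explicit denominators in Lemma~\ref{lem:explicit}, to an expression in $E(k),K(k)$, and their derivatives. The abstract instability criterion is then packaged into Proposition~\ref{prop:perturbation}: real eigenvalues branch off if $\det M(k)<0$. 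For $\cn$ this quantity is computed explicitly in terms of $E,K$ and shown to have the correct (negative) sign on $(0,1)$ using the monotonicities already established for $E,K,EK$, yielding $\lambda_1\in\mathbb R\setminus\{0\}$.

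The remaining task is the rigorous step: writing $u(\xi)=u_0+\xi u_1+\xi^2 u_2+\xi^3 R$ and $\lambda(\xi)=\xi\lambda_1+\xi^2\lambda_2+\xi^3\rho$, substituting into $\mathcal A(\xi)u=\lambda u$, and rewriting the result as a fixed-point problem $(R,\rho)=\Phi_\xi(R,\rho)$ obtained by inverting $\mathcal A(0)-\xi\lambda_1$ on the complement of the invariant subspace together with the reduced spectral problem on the four-dimensional space. Standard estimates show $\Phi_\xi$ is a contraction on a small ball in $(H^2_{\mathrm{loc}}\cap P_{4K})\times\mathbb C$ for $|\xi|\leq\xi_0$ small, which gives the desired eigenpair. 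I expect the main obstacle to be the order-$\xi^2$ computation of $M(k)$: assembling the many inner products coming from $u_1$, collapsing them using the identities for $\partial_k E,\partial_k K$ and \eqref{eq:2.1}, and proving $\det M(k)<0$ uniformly in $k\in(0,1)$. Once this sign is established, the rest of the argument is a rather mechanical application of the implicit function theorem around a simple eigenvalue of the reduced $2\times2$ system.
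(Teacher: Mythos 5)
Your overall framework coincides with the paper's: Bloch--Floquet decomposition, degenerate perturbation theory around the four-dimensional generalized kernel built from the Jordan chains $L_+^{\cn}\phi_1=\cn$, $L_-^{\cn}\xi_1=\cn_x$ of Lemma~\ref{lem:explicit}, solvability conditions at second order, and a Lyapunov--Schmidt/contraction step for the remainder. (Working in $P_{4K}$ with Bloch parameter near $0$, rather than in $P_{2K}$ with $\theta$ near $\pi/(2K)$, is, as noted at the end of Section~\ref{sec:instability}, an equivalent reparametrization of the same branch.) The genuine gap is your instability criterion. You package the reduced problem as $M(k)\binom{a}{b}=\lambda_1^2\binom{a}{b}$ and assert that instability follows from $\det M(k)<0$, ``yielding $\lambda_1\in\R\setminus\{0\}$.'' For $\cn$ this fails. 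The reduced characteristic equation is the quadratic~\eqref{eq:quadratic} in $\lambda_1^2$, namely $A_1A_2\lambda_1^4+(A_1C_2+A_2C_1-B^2)\lambda_1^2+C_1C_2=0$, so the product of the two admissible values of $\lambda_1^2$ (your $\det M$) equals $C_1C_2/(A_1A_2)$; from the explicit formulas in the proof of Theorem~\ref{thm:instability} one has $C_1<0$, $C_2>0$ and $A_1A_2<0$, so this product is \emph{positive}. No real root $\lambda_1^2>0$ branches off. The operative condition, \eqref{eq:ass2}, is instead that the \emph{discriminant} of this quadratic is negative, which forces $\lambda_1^2\notin\R$ and places one of the four roots $\pm\lambda_1,\pm\bar\lambda_1$ strictly inside the first quadrant: the unstable eigenvalue is genuinely complex (an oscillatory instability, consistent with the figure-eight spectrum in Figure~\ref{fig:sfig5}), not real. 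As written, your verification step would compute $\det M(k)$, find it positive for every $k\in(0,1)$, and the argument would stall.

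A secondary point: because $\lambda_1^2$ is non-real, your closing appeal to ``the implicit function theorem around a simple eigenvalue of the reduced $2\times2$ system'' must be justified for complex roots; the paper does exactly this by showing that the Jacobian of the compatibility map in $(b_1,\lambda_2)$ has $\det M\neq 0$ \emph{because} $\lambda_1^2\notin\R$. The remaining ingredients of your outline — automatic first-order solvability by parity, the contraction estimate for the $O(\xi^3)$ remainder, and deducing instability in $P_{4nK}$ by choosing $n_1$ so that the first nonzero Bloch parameter falls below $\xi_0$ — match the paper and are sound once the sign condition is corrected.
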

We will in fact prove a slightly more general result, which is the
existence of a branch strictly contained in the first quadrant  for
the spectrum of $J\mathcal L^{\cn}$ considered as an operator
on $L^2(\R)$. 
Theorem~\ref{thm:instability} will be a consequence of a more general
perturbation result applying to all real periodic waves (see Proposition~\ref{prop:perturbation}), and in particular not relying on any integrable
structure.

We start with some preliminaries. Let 
\[
J\mathcal L=
\begin{pmatrix}
0 & L_-\\-L_+&0
\end{pmatrix}
\]
with 
\[
L_-=-\partial_{xx}-a-b u^2,\quad L_+=-\partial_{xx}-a-3b u^2
\]
where $u$ a periodic solution to 
\begin{equation}
\label{eq:assump-1}
u_{xx}+a u+b|u|^2\ u=0.
\end{equation}
We assume that  $u(x) \in \R$ 
and let $T$ denote a period of $u^2$. 
The spectrum of $J\mathcal L$ as an operator on $L^2(\R)$
can be analyzed using Bloch-Floquet decomposition. For $\theta\in [0,2\pi/T)$, define
\[
J\mathcal L^\theta=
\begin{pmatrix}
0 & L_-^\theta\\-L_+^\theta&0
\end{pmatrix}
\]
where $L_\pm^\theta$ is the operator obtained when formally replacing
$\partial_x$ by $\partial_x+i\theta=e^{-i\theta
  x}\partial_x\left(e^{i\theta x}\cdot\right)$ in the expression of
$L_\pm$. If we let $(M^\theta f )(x)= e^{i \theta x} f(x)$, then $L^\theta _\pm = M^{-\theta}L_\pm M^\theta$.
Then we have
\begin{equation}\label{eq:bloch}
\sigma\left(J\mathcal
  L |_{L^2(\R)}\right)=\bigcup_{\theta\in[0,\frac{2\pi}{T}]}\sigma\left(J\mathcal
  L^\theta |_{P_T}\right).
\end{equation}
In what follows, all operators are considered on $ P_T$
unless otherwise mentioned.

Let us consider the case $\theta=\frac\pi T$. Denote 
\[
D=\partial_x+i\frac\pi T.
\]
Since $u$ is a real valued periodic solution to \eqref{eq:assump-1},
by Lemmas \ref{th:2-1} and \ref{th:2-2}, $u$ is a rescaled $\cn$,
$\dn$ or $\sn$. In any case, the following holds:
\begin{equation} \label{eq:assumption}
\begin{split}  & \varphi=e^{-i\frac\pi Tx} u, \;\;
  \psi=D\varphi=e^{-i\frac\pi Tx} u_x \; \in 
  H_{\mathrm{loc}}^1\cap P_T\setminus\{0\} \\
  & \mbox{ are such that }
  \ker(L^{\frac{\pi}{T}}_-)=\langle  \varphi\rangle,
  \; \; \ker(L^{\frac{\pi}{T}}_+)=\langle  \psi\rangle.
\end{split}
\end{equation} 
Note that for any $f,g\in H^1_{\mathrm{loc}}\cap P_T$, we can integrate by parts with $D$:
\[
\int_0^T Df \bar g \; dx = -\int_0^T  f \overline{Dg} \; dx.
\]
Remark that
\[
\ps{\varphi}{\psi} = \int_0^T\varphi \bar\psi\; dx=\int_0^T uu_x \; dx=0.
\]
Therefore there exist $\varphi_1,\psi_1$ such that
\[
L_-^{\frac \pi T}\psi_1=\psi,\quad L_+^{\frac \pi T}\varphi_1=\varphi,
\quad \varphi _1\perp \psi, \quad \psi_1 \perp \varphi.
\]
The kernel of the operator $J\mathcal L^{\frac \pi T}$ is generated
by $\binom{\psi}{0},$ $\binom{0}{\varphi}$. On top of that, the generalized kernel of
$J\mathcal L^{\frac \pi T}$ contains (at least) $\binom{0}{\psi_1},$ $\binom{\varphi_1}{0}$.

Our goal is to analyze the spectrum of the operator $J\mathcal L^{\frac
  \pi T -\eps}$ when $|\eps|$ is small. In particular, we want to locate the eigenvalues generated by perturbation of the generalized kernel of $J \mathcal L^{\frac \pi T}$. 
For the sake of simplicity in notation, when $\theta=\frac \pi T$, we
use a tilde to replace the exponent $\frac \pi T$. In particular, we write
\[
J\mathcal L^{\frac \pi T}=
J\tilde{\mathcal L},\quad 
L_\pm^{\frac \pi T}=
 \tilde L_\pm.
\]

\begin{proposition}\label{prop:perturbation}
Assume the condition~\eqref{eq:ass2} stated below.
There exist $\lambda_1\in\mathbb C$ with 
$\Re(\lambda_1) > 0$, $\Im(\lambda_1) > 0$; $b_0\in\C$;
and $\eps_0>0$, such that for all $0\leq \eps<\eps_0$ there exist
$\lambda_2(\eps)\in\mathbb C$, $b_1(\eps)\in\mathbb C$, 
$v_2(\eps),w_2(\eps)\in H_{\mathrm{loc}}^2 \cap P_T$,
\begin{gather}
\abs{b_1(\eps)}+\abs{\lambda_2(\eps)}+\norm{v_2(\eps)}_{H_{\mathrm{loc}}^2\cap P_T}+\norm{w_2(\eps)}_{H_{\mathrm{loc}}^2\cap P_T}\lesssim
1
\nonumber
\\
\label{v2w2orthogonal}
v_2(\eps)\perp \psi,\quad w_2(\eps)\perp \vp,
\end{gather}
verifying the following property. Set 
\begin{align}
v_0&=b_0\psi,&v_1&=b_1(\eps)\psi-2ib_0 \tilde L_+^{-1}D\psi-\lambda_1\varphi_1,\label{eq:def_v}\\
w_0&=\varphi,&w_1&=(b_0\lambda_1-2i)\psi_1. \label{eq:def_w}
\end{align}
Here, $\tilde L_+^{-1}$ is taken from $\psi^\perp$ to $\psi^\perp$.
Define
\[
\begin{array}{lllllll}
\lambda&=&&&\eps\lambda_1&+&\eps^2\lambda_2(\eps),\\
v&=&v_0&+&\eps v_1(\eps)&+&\eps^2v_2(\eps),\\
w&=&w_0&+&\eps w_1&+&\eps^2w_2(\eps).
\end{array}
\]
Then 
\[
J\mathcal L^{\frac \pi T-\eps}\binom{v}{w}=\lambda \binom{v}{w}.
\]
\end{proposition}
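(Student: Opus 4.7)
The plan is a Lyapunov--Schmidt reduction around the four-dimensional generalized kernel of $J\tilde{\mathcal L}$ at $\eps=0$, combined with a contraction argument for the remainder. I begin by expanding the perturbed operator: writing $\partial_x+i(\pi/T-\eps)=D-i\eps$, a direct computation gives
\[
L_\pm^{\pi/T-\eps}=\tilde L_\pm+2i\eps D+\eps^2,\qquad J\mathcal L^{\pi/T-\eps}=J\tilde{\mathcal L}+\eps B_1+\eps^2 J,
\]
with $B_1=\begin{pmatrix}0&2iD\\-2iD&0\end{pmatrix}$. The generalized kernel of $J\tilde{\mathcal L}$ is spanned by the two Jordan chains $\binom{\varphi_1}{0}\mapsto-\binom{0}{\varphi}\mapsto 0$ and $\binom{0}{\psi_1}\mapsto\binom{\psi}{0}\mapsto 0$. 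Inserting the ansatz for $(v,w)$ and $\lambda$ into $J\mathcal L^{\pi/T-\eps}(v,w)^T=\lambda(v,w)^T$ and grouping by powers of $\eps$ produces a hierarchy of equations.

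Order $\eps^0$ forces $(v_0,w_0)^T\in\ker(J\tilde{\mathcal L})$; normalizing by taking $w_0=\varphi$ leaves $v_0=b_0\psi$ with $b_0\in\C$ free. Order $\eps^1$, using $D\varphi=\psi$, reduces to
\[
\tilde L_-w_1=(\lambda_1 b_0-2i)\psi,\qquad \tilde L_+v_1=-\lambda_1\varphi-2ib_0\,D\psi,
\]
and both solvability conditions are automatic: $\langle\varphi,\psi\rangle=0$ by assumption and $\langle D\psi,\psi\rangle=\int_0^T u_{xx}u_x\,dx=0$ by integration by parts. Solving in $\varphi^\perp$ and $\psi^\perp$ recovers formulas~\eqref{eq:def_v}--\eqref{eq:def_w}, with $b_1\in\C$ remaining free as the component of $v_1$ along $\psi$ (the analogous freedom of $w_1$ along $\varphi$ being eliminated by a further normalization).

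At order $\eps^2$ the equation reads $J\tilde{\mathcal L}(v_2,w_2)^T=F(\lambda_1,\lambda_2,b_0,b_1)$ for an explicit $F$. Pairing against the cokernel elements $\varphi$ (first line) and $\psi$ (second line) yields two scalar solvability conditions. Crucially, both $\lambda_2$ and $b_1$ drop out of these conditions, since their contributions are proportional to $\langle\varphi,\psi\rangle=0$ or $\langle D\psi,\psi\rangle=0$. What remains is a coupled algebraic system in $(\lambda_1,b_0)$ whose coefficients are quadratic forms in $\varphi,\psi,\varphi_1,\psi_1,\tilde L_+^{-1}D\psi$; eliminating $b_0$ produces a quadratic equation for $\lambda_1$. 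Assumption~\eqref{eq:ass2} is precisely the statement that the discriminant of this quadratic has the right sign for the roots to be genuinely complex, and a sign check (the rigorous version of Rowlands' formal computation) selects a root with $\Im(\lambda_1)>0$ and $\Re(\lambda_1)>0$, fixing also $b_0\in\C$.

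With $\lambda_1,b_0$ determined, the correction $(v_2,w_2,\lambda_2(\eps),b_1(\eps))$ is built by a Banach fixed-point argument. Subtracting the already-solved orders from the full eigenvalue equation gives
\[
J\tilde{\mathcal L}\binom{v_2}{w_2}=\mathcal G_0+\eps\,\mathcal G_1(\lambda_2,b_1,v_2,w_2;\eps),
\]
where $\mathcal G_0$ is a fixed vector and $\mathcal G_1$ is Lipschitz on bounded sets. Let $P$ be the spectral projector of $J\tilde{\mathcal L}$ onto its four-dimensional generalized kernel, defined biorthogonally with the adjoint $\mathcal LJ^*$. On $\mathrm{Ran}(I-P)$ the operator $J\tilde{\mathcal L}$ is boundedly invertible, which determines $(I-P)(v_2,w_2)^T$ as a Lipschitz function of $(\lambda_2,b_1,\eps)$ and $P(v_2,w_2)^T$; the latter is fixed by the orthogonality constraints~\eqref{v2w2orthogonal}. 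The two remaining solvability conditions at order $\eps^2$, now written for the full $(v_2,w_2)$, depend affinely at leading order on $(\lambda_2,b_1)$ with Jacobian which is non-degenerate at $\eps=0$ because $\Im(\lambda_1)\neq 0$; the Implicit Function Theorem delivers $\lambda_2(\eps),b_1(\eps)$ on a small interval $[0,\eps_0)$, and then $(v_2,w_2)$ follows. The main obstacle is the non-self-adjoint Jordan structure of $J\tilde{\mathcal L}$ at $\eps=0$: standard analytic perturbation theory does not apply, so one must carefully build the biorthogonal projector $P$ and verify that the restricted inverse on $\mathrm{Ran}(I-P)$ is uniformly bounded in $H^2_{\mathrm{loc}}\cap P_T$, while simultaneously tracking the algebraic sign in assumption~\eqref{eq:ass2} that forces $\lambda_1$ off the imaginary axis.
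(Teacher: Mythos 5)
Your proposal follows essentially the same route as the paper's proof: the $\eps$-expansion of $J\mathcal L^{\pi/T-\eps}$, automatic solvability at order $\eps$, the order-$\eps^2$ compatibility conditions (from which $b_1$ and $\lambda_2$ drop out) yielding a real-coefficient quadratic in $\lambda_1^2$ whose negative discriminant is exactly condition~\eqref{eq:ass2} and hence a root in the open first quadrant, followed by a Lyapunov--Schmidt reduction and the implicit function theorem for $(b_1,\lambda_2,v_2,w_2)$. The only differences are organizational and minor: the paper inverts the self-adjoint diagonal operator $\tilde{\mathcal L}_\eps$ on $\psi^\perp\times\varphi^\perp$ componentwise rather than constructing a biorthogonal spectral projector for the non-self-adjoint $J\tilde{\mathcal L}$ (thereby avoiding the Jordan-structure bookkeeping you flag as the main obstacle), and the nondegeneracy of the $2\times 2$ Jacobian in $(b_1,\lambda_2)$, which you assert, is verified there by an explicit computation showing $\det M=-2\lambda_1\left(B^2-2A_1A_2\lambda_1^2-C_2A_1-C_1A_2\right)\neq 0$ precisely because the coefficients are real while $\lambda_1^2\notin\R$.
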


Note that the orthogonality conditions in \eqref{v2w2orthogonal} are reasonable: The eigenvector is normalized by $P_\vp w = w_0 = \vp$, and hence $w_2 \perp \vp$. To impose $v_2 \perp \psi$,  we allow $b_1(\eps)\psi$ in $v_1$ to be $\eps$-dependent to absorb $P_\psi (v-v_0)$.

\begin{proof}[Proof of Proposition~\ref{prop:perturbation}]
Let us write the expansion of the operators in $\eps$.  We have
\[
L_\pm^{\frac \pi T -\eps}=L_\pm^{\frac \pi T}+2i\eps D +\eps^2,
\]
Therefore
\[
J\mathcal L^{\frac \pi T -\eps}=
J\mathcal L^{\frac \pi T}+\eps
\begin{pmatrix}
0 &2iD\\
-2iD&0
    \end{pmatrix}
+\eps^2
\begin{pmatrix}
  0&1\\-1&0
\end{pmatrix}
\]
We expand in $\eps$ the equation $\left(J\mathcal L^{\frac \pi T-\eps}-\lambda\mathcal
  I\right)\binom{v}{w}=0$ and show that it can be satisfied at each order
of $\eps$. 

At order $\mathcal O(1)$, we have
\[
J\tilde{\mathcal L} \binom{v_0}{w_0}=0,
\]
which is satisfied because $\binom{v_0}{w_0}\in\ker(J\tilde{\mathcal L})$ by
definition.

At order $\mathcal O(\eps)$, we have
\[
J\tilde{\mathcal L} \binom{v_1}{w_1}+\begin{pmatrix}
-\lambda_1 &2iD\\
-2iD&-\lambda_1
    \end{pmatrix}\binom{v_0}{w_0}=0,
\]
which can be rewritten, using the expression of $v_0$, $w_0$, and
$D\varphi=\psi$, as
\begin{align}
\tilde L_-w_1&=(b_0\lambda_1-2i)\psi,\label{eq:w_1}\\
\tilde L_+v_1&=-2ib_0D\psi-\lambda_1\varphi. \label{eq:v_1}
\end{align}
It is clear that the functions $v_1(\eps)$ and $w_1$ defined in
\eqref{eq:def_v}-\eqref{eq:def_w} satisfy~\eqref{eq:w_1}-\eqref{eq:v_1}.

At order $\mathcal O(\eps^2)$, we consider the equation as a whole, 
involving also the higher orders of $\eps$. We have
\begin{multline*}
J\tilde{\mathcal L} \binom{v_2}{w_2}+\begin{pmatrix}
-\lambda_1 &2iD\\
-2iD&-\lambda_1
    \end{pmatrix}\binom{v_1}{w_1}
+\begin{pmatrix}
  -\lambda_2&1\\-1&-\lambda_2
\end{pmatrix}\binom{v_0}{w_0}
\\
+\eps \left(\begin{pmatrix}
-\lambda_1 &2iD\\
-2iD&-\lambda_1
    \end{pmatrix}\binom{v_2}{w_2}+\begin{pmatrix}
  -\lambda_2&1\\-1&-\lambda_2
\end{pmatrix}\binom{v_1}{w_1}
\right)
\\
+
\eps^2 \begin{pmatrix}
  -\lambda_2&1\\-1&-\lambda_2
\end{pmatrix}\binom{v_2}{w_2}
=0,
\end{multline*}
in other words
\begin{align}
\tilde L_-w_2&=W_2+\eps W_3+\eps^2W_4\label{eq:w_2}\\
\tilde L_+v_2&=V_2+\eps V_3+\eps^2V_4\label{eq:v_2}
\end{align}
where
\begin{equation} \label{eq:V,W}
\begin{array}{ll}
W_2 =\lambda_1 v_1-2iDw_1+\lambda_2v_0-w_0, &
V_2 =-2iDv_1-\lambda_1w_1-v_0-\lambda_2w_0,\\
W_3 =\lambda_1 v_2-2iDw_2+\lambda_2v_1-w_1, &
V_3 =-2iDv_2-\lambda_1w_2-v_1-\lambda_2w_1,\\
W_4 =\lambda_2v_2-w_2, & V_4=-v_2-\lambda_2w_2.
\end{array}
\end{equation}
Note that $V_2$ and $W_2$ depend on $b_0,\lambda_1$ and
$b_1,\lambda_2$, whereas $V_3,V_4$ and $W_3,W_4$ also depend on $v_2$
and $w_2$. 
Our strategy to solve the system~\eqref{eq:w_2}-\eqref{eq:v_2} is
divided into two steps. We first ensure that it can be solved at the main order by
ensuring that the compatibility conditions 
\begin{equation}
\label{eq:compatibility_V2_W2}
\ps{W_2}{\varphi}=\ps{V_2}{\psi}=0
\end{equation}
are satisfied. This is achieved by making a suitable choice of
$b_0,\lambda_1$. Then we solve for $b_1,\lambda_2,v_2,w_2$ by using a
Lyapunov-Schmidt argument. 

We rewrite the compatibility conditions
\eqref{eq:compatibility_V2_W2} in the following form, using the expressions
for $v_0,$ $w_0$, $v_1$ and $w_1$, and the properties of $\varphi$ and $\psi$:
\begin{align*}
&\phantom{b_0}\ps{\varphi_1}{\varphi}\lambda_1^2+b_02i\left(\ps{
  D\psi}{\varphi_1}-\ps{\psi_1}{\psi}\right)\lambda_1+\phantom{b_0}\left(\ps{\varphi}{\varphi}-4\ps{\psi_1}{\psi}\right)=0\\
&b_0\ps{\psi_1}{\psi}\lambda_1^2
+\phantom{b_0} 2i \left(
  \ps{\varphi_1}{D\psi}
-\ps{\psi_1}{\psi}\right)\lambda_1
+b_0\left(\ps{\psi}{\psi}-4 \ps{\tilde L_+^{-1}D\psi}{D\psi}\right)=0
\end{align*}
These equations do not depend on $b_1$ or $\la_2$ although $W_2 $ and $V_2$ do.
For a moment, we write these equations as 
\begin{align}
 \phantom{b_0} A_1\lambda_1^2+b_0B\lambda_1+\phantom{b_0} C_1&=0,\label{eq:first}\\
b_0A_2\lambda_1^2+\phantom{b_0} B\lambda_1+b_0C_2&=0,\label{eq:second}
\end{align}
where
\[
\begin{split}
  &A_1 := \ps{\varphi_1}{ \varphi} \quad \in \R, \\
  &A_2 := \ps{\psi_1}{ \psi} \quad \in \R, \\
  &B := 2i\left(\ps{D\psi}{\varphi_1}-\ps{\psi_1}{\psi}\right) \quad \in i \R, \\
  &C_1 := \ps{\varphi}{\varphi}-4\ps{\psi_1}{\psi} \quad \in \R, \\
  &C_2 := \ps{\psi}{\psi}-4 \ps{\tilde L_+^{-1}D\psi}{D\psi} \quad \in \R.
\end{split}
\]
Multiplying~\eqref{eq:first} by $C_2+A_2\lambda_1^2$,~\eqref{eq:second} by $B\lambda_1$, and subtracting gives 
\begin{equation} \label{eq:quadratic}
 A_1A_2\lambda_1^4+
  (A_1C_2 +A_2C_1-B^2)\lambda_1^2+C_1C_2=0,
\end{equation}
a quadratic equation in $\lambda_1^2$ with real coefficients. 
If $A_1A_2\neq 0$, the roots of~\eqref{eq:quadratic} are given by 
\[
 \lambda_1^2= \frac{- (A_1C_2 +A_2C_1-B^2) \pm\sqrt{(A_1C_2 +A_2C_1-B^2)^2-4 A_1A_2 C_1C_2}}{2A_1A_2}
\]
We now assume the discriminant of this quadratic is negative:
\begin{equation} \label{eq:ass2}
  (A_1 C_2 + A_2 C_1-B^2)^2 -4 A_1A_2 C_1C_2 < 0
\end{equation}
which implies that $A_1 A_2 \not= 0$, and moreover
guarantees the existence of a root $\lambda_1$ of~\eqref{eq:quadratic}
strictly contained in the first quadrant: 
$\Re \lambda_1 > 0$ and $\Im \lambda_1 > 0$ 
(the other roots being $-\lambda_1$, $\pm \bar{\lambda}_1$). 
It follows from~\eqref{eq:ass2} that $B \not= 0$, and so we may 
solve~\eqref{eq:first} and set
\[
  b_0 := -\frac{(A_1 \lambda_1^2 + C_1)}{B \lambda_1},
\]
so that both~\eqref{eq:first} and \eqref{eq:second} are satisfied.

We now solve for $b_1,\lambda_2,v_2,w_2$ 
using a Lyapunov-Schmidt argument. The first step is to
solve, given $(b_1, \la_2)$, projected versions 
of~\eqref{eq:w_2}-\eqref{eq:v_2},
\begin{equation} \label{eq:projected}
\begin{split}
  \tilde L_-w_2 &= W_2 + P_{\vp^{\perp}} \left[ 
  \eps W_3+\eps^2W_4 \right]  \\
  \tilde L_+v_2 &= V_2 + P_{\psi^{\perp}} \left[
  \eps V_3+\eps^2V_4 \right]
\end{split}
\end{equation}
to obtain $v_2 = v_2(b_1,\la_2) \in \psi^\perp$, 
$w_2 = w_2(b_1,\la_2) \in \vp^\perp$:
\begin{lemma}
Given any $b_1 \in \C$, $\la_2 \in \C$
with $|b_1| + |\la_2| \leq M$, there is a unique solution 
\[
  (v_2, w_2) = (v_2(b_1,\la_2),w_2(b_1,\la_2)) \in 
(H^2_{loc} \cap P_T \cap \psi^{\perp}) \times (H^2_{loc} \cap P_T \cap \vp^{\perp})
\]
of~\eqref{eq:projected}, with $\|v_2\|_{H^2} + \| w_2 \|_{H^2} \leq C(M)$.
\end{lemma}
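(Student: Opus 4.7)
The plan is to recast \eqref{eq:projected} as a Banach fixed-point problem on the real Hilbert space
\[
X = \bket{(v_2,w_2) \in (H^2_{\mathrm{loc}}\cap P_T)^2 : v_2 \perp \psi,\ w_2 \perp \vp}
\]
with norm $\norm{(v_2,w_2)}_X = \norm{v_2}_{H^2(0,T)} + \norm{w_2}_{H^2(0,T)}$. The key preliminary observation is that $\tilde L_\pm$ are self-adjoint second-order elliptic operators on $P_T$ with one-dimensional kernels $\langle \psi \rangle$ and $\langle \vp \rangle$ by \eqref{eq:assumption}, so by Fredholm theory their restrictions to the orthogonal complements are isomorphisms, yielding bounded inverses $\tilde L_+^{-1}: \psi^\perp \cap L^2 \to \psi^\perp \cap H^2$ and $\tilde L_-^{-1}: \vp^\perp \cap L^2 \to \vp^\perp \cap H^2$.

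I would then define the map $\mathcal T: X \to X$ by
\[
\mathcal T(v_2,w_2) = \bigl( \tilde L_+^{-1} P_{\psi^\perp}(V_2 + \eps V_3 + \eps^2 V_4),\ \tilde L_-^{-1} P_{\vp^\perp}(W_2 + \eps W_3 + \eps^2 W_4) \bigr),
\]
where $V_j,W_j$ are given by \eqref{eq:V,W}. The terms $V_2, W_2$ are independent of $(v_2,w_2)$ and already lie in $\psi^\perp$ and $\vp^\perp$ respectively, thanks to the compatibility conditions \eqref{eq:compatibility_V2_W2} arranged by the choice of $b_0,\lambda_1$. The terms $V_3, W_3$ are affine in $(v_2, w_2)$ and involve at most one derivative (through $D$), while $V_4, W_4$ are linear and of zeroth order. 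Using $|b_1|+|\lambda_2|\leq M$ together with the fixed smooth profiles $v_0, v_1, w_0, w_1$, I obtain
\[
\norm{V_2}_{L^2} + \norm{W_2}_{L^2} \leq C(1+M),
\]
\[
\norm{\eps V_3 + \eps^2 V_4}_{L^2} + \norm{\eps W_3 + \eps^2 W_4}_{L^2} \leq C\eps(1+M)\bigl(1 + \norm{(v_2,w_2)}_X\bigr).
\]

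Combining these with the $L^2 \to H^2$ bound for $\tilde L_\pm^{-1}$ yields $\norm{\mathcal T(v_2,w_2)}_X \leq C(1+M) + C\eps(1+M)\norm{(v_2,w_2)}_X$, and since $V_2, W_2$ cancel in differences,
\[
\norm{\mathcal T(v_2,w_2)-\mathcal T(v_2',w_2')}_X \leq C\eps(1+M)\norm{(v_2,w_2)-(v_2',w_2')}_X.
\]
Taking $R = 2C(1+M)$ and $\eps_0 = \eps_0(M)$ sufficiently small, $\mathcal T$ maps the closed ball $\overline{B_R(0)} \subset X$ into itself and is a strict contraction, so Banach's fixed-point theorem furnishes a unique $(v_2(b_1,\lambda_2),w_2(b_1,\lambda_2)) \in X$ solving \eqref{eq:projected} with $\norm{(v_2,w_2)}_X \leq C(M)$. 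I do not anticipate a serious obstacle; the only care needed is to keep all constants uniform over $|b_1|+|\lambda_2|\leq M$, which is immediate from the affine dependence of all inhomogeneous terms on these parameters, and to check the $L^2\to H^2$ mapping property of $\tilde L_\pm^{-1}$, which is standard elliptic/Fredholm theory for self-adjoint second-order operators on $P_T$.
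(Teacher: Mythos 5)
Your argument is correct and is essentially the paper's: the paper rewrites \eqref{eq:projected} as $\tilde{\mathcal L}_\eps (v_2,w_2)^T = (S_v,S_w)^T + \eps (R_v,R_w)^T$ and inverts the perturbed operator $\tilde{\mathcal L}_\eps = \tilde{\mathcal L} + O(\eps)$ on $(\psi^\perp)\times(\vp^\perp)$ using that $\tilde{\mathcal L}^{-1}$ is bounded there, which is exactly what your contraction iteration computes, since your map $\mathcal T$ is affine with global Lipschitz constant $O(\eps(1+M))$ and its Neumann series is the fixed-point iteration. The only point to keep explicit is the one you already note, namely that the smallness threshold $\eps_0$ depends on $M$.
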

\begin{proof}
By the expressions~\eqref{eq:V,W}, we may rewrite system~\eqref{eq:projected} as a linear system of $v_2$ and $w_2$,
\[
  \tilde {\mathcal L}_\eps  \begin{pmatrix} v_2 \\ w_2 \end{pmatrix} = 
  \begin{pmatrix} S_v \\ S_w \end{pmatrix} + 
  \eps \begin{pmatrix} R_v \\ R_w \end{pmatrix} ,
\]
where
\[
  \tilde {\mathcal L}_\eps =
  \begin{pmatrix} \tilde L_+ + P_{\psi^\perp} 2i \eps D + \eps^2 & 
  (\eps \la_1  + \eps^2 \la_2)P_{\psi^\perp}  \\ 
  -(\eps \la_1  + \eps^2 \la_2)P_{\vp^\perp}& 
  \tilde L_- + 
  P_{\vp^\perp} 2i \eps D + \eps^2     
  \end{pmatrix},
\]
\begin{equation} \label{eq:sources}
\begin{split}
  & S_v = P_{\psi^\perp} ( -2iD[b_1\psi - 2ib_0 \tilde L_+^{-1} D \psi - \la_1 \vp_1]
  - \la_1(b_0 \la_1 - 2i) \psi_1 - \la_2 \vp )   \\
  & S_w = P_{\vp^\perp} ( \la_1[ b_1\psi - 2ib_0 \tilde L_+^{-1} D \psi - \la_1 \vp_1]
  - 2iD(b_0 \la_1 - 2i) \psi_1 + \la_2 b_0 \psi ) \\
  & R_v = P_{\psi^\perp} ( \la_2(2i-b_0 \la_1) \psi_1 - 
  [- 2ib_0 \tilde L_+^{-1} D \psi - \la_1 \vp_1] ) \\
  & R_w = P_{\vp^\perp} ( \la_2 [b_1\psi - 2ib_0 \tilde L_+^{-1} D \psi - \la_1 \vp_1]
  + (2i-b_0 \la_1) \psi_1 ).
  \end{split}
\end{equation}
Note that $S_v,S_w,R_v$ and $R_w$ do not contain $v_2,w_2$ or $\eps$.
Recalling the definition
\[
  \tilde {\mathcal L} =
  \begin{pmatrix} \tilde L_+ & 0 \\ 0 & \tilde L_- 
  \end{pmatrix}, 
\]
it follows from~\eqref{eq:assumption} that
\[
  \tilde {\mathcal L}^{-1} :(P_T \cap \psi^\perp) \times (P_T \cap \vp^\perp)
  \to (P_T \cap H^2_{loc} \cap \psi^\perp) \times (P_T \cap H^2_{loc}
  \cap \vp^\perp)
\]
is bounded, and hence so is $\tilde {\mathcal L}_\eps^{-1}$, uniformly in
$\eps$ for $\eps$ sufficiently small, with
\[
  \| \tilde {\mathcal L}_\eps^{-1} - \tilde {\mathcal L}^{-1} 
  \|_{(L^2 \times L^2 \to H^2 \times H^2)} \lesssim \eps. 
\]
Thus
\begin{equation} \label{eq:v2,w2}
  \begin{pmatrix} v_2 \\ w_2 \end{pmatrix}  = 
  \tilde {\mathcal L}_\eps^{-1} \left(
  \begin{pmatrix} S_v \\ S_w \end{pmatrix} + 
  \eps \begin{pmatrix} R_v \\ R_w \end{pmatrix} \right) 
  = \begin{pmatrix} \tilde L_+^{-1} S_v \\ \tilde L_-^{-1} S_w \end{pmatrix}
  + O_{H^2 \times H^2}(\eps)
\end{equation}
gives $(v_2(b_1,\la_2), w_2(b_1,\la_2))$ as desired.
\end{proof} 

The second step is to plug $(v_2(b_1,\la_2), w_2(b_1,\la_2))$ back into 
$V_3$, $V_4$, $W_3$, $W_4$, and solve, for $(b_1, \la_2)$, 
the remaining compatibility conditions  
\begin{equation} \label{eq:compatibility}
 (V_3+\eps V_4,\psi)=  (W_3+\eps W_4,\varphi)=0
\end{equation}
which, together with~\eqref{eq:projected}, complete the solution
of the eigenvalue problem. Using~\eqref{eq:v2,w2} and~\eqref{eq:V,W},
we may write~\eqref{eq:compatibility} as the system
\begin{equation*} 
\begin{split}
  0 &= (-2iD[\tilde L_+^{-1} S_v + O (\eps)] -\la_1 
  [\tilde L_-^{-1} S_w + O (\eps)]
  -v_1 - \la_2 w_1 + \eps V_4, \psi) \\
  0 &= (  \la_1[\tilde L_+^{-1} S_v + O (\eps)] -2iD[
  \tilde L_-^{-1} S_w + O (\eps)] + \la_2 v_1 - w_1 
  + \eps W_4, \vp) 
\end{split}
\end{equation*}
and then by the expressions~\eqref{eq:def_v}-\eqref{eq:def_w}
and~\eqref{eq:sources}, we may further rewrite as
\begin{equation} \label{eq:compat2}
\Phi(b_1,\la_2,\eps) = 
  (M + O(\eps)) \begin{pmatrix} b_1 \\ \la_2 \end{pmatrix}
  +F+O(\eps) =0
\end{equation}
where $\Phi$ is a rational vector function of $b_1,\la_2$ and $\eps$;
$F$ is a fixed (independent of $(b_1,\la_2)$) vector
with $|F| \lesssim 1$; and $M = \frac {\partial \Phi}{\partial (b_1,\la_2)}|_{\eps=0}$ is the matrix
\[
\begin{split}
  &M = \begin{pmatrix}
  ( -4D \tilde L_+^{-1}D \psi - \la_1^2 \psi_1 -\psi, \psi) &  
  (2iD \vp_1 - 2(\la_1 b_0 - i) \psi_1, \psi)
  \\ (-2i \la_1 \tilde L_+^{-1} D\psi  -2i \la_1 D \psi_1, \vp) 
  & (-\la_1 \vp_1  -2i b_0 D \psi_1 -2ib_0 \tilde L_+^{-1} D \psi - \la_1 \vp_1, \vp)
  \end{pmatrix} \\
  &= \begin{pmatrix}
  4(\tilde L_+^{-1}D \psi, D \psi) - \la_1^2 (\psi_1,\psi) - (\psi, \psi) &  
  -2i(\vp_1,D\psi)  - 2(\la_1 b_0 - i) (\psi_1, \psi) \\ 
  -2i \la_1 (D\psi, \vp_1)  +2i \la_1 (\psi_1, \psi) 
  & -2\la_1 (\vp_1,\vp)  +2i b_0 (\psi_1,\psi)  
  -2ib_0 (D \psi, \vp_1)
  \end{pmatrix} \\
  &= \begin{pmatrix}
  -C_2 -A_2 \la_1^2 &  
  -B - 2\la_1 b_0 A_2 \\ 
  -\la_1 B  & -b_0 B - 2 \la_1 A_1
   \end{pmatrix} = \begin{pmatrix}
  \frac{B}{b_0}\la_1 &  
  -B - 2\la_1 b_0  A_2 \\ 
  -\la_1 B  & -b_0 B - 2 \la_1 A_1
   \end{pmatrix}.
\end{split}
\]
where in the last step we used~\eqref{eq:second}. The determinant of $M$ is, using \eqref{eq:second} and \eqref{eq:first} to eliminate $b_0$,
\[
\begin{split}
\det M &= -2 \la_1  B  \left(B+  \frac{\la_1}{b_0} A_1+\la_1 b_0 A_2 \right) 
\\
& = -2 \la_1  B  \left(B-  \frac{A_2 \la_1^2+C_2}B A_1- \frac {A_1 \la_1^2+C_1}B A_2 \right) 
\\
& = -2 \la_1    \left(B^2-  2 A_1 A_2 \la_1^2- C_2 A_1- C_1 A_2 \right) .
\end{split}
\]
Since $A_1,A_2,C_1,C_2,B^2$ are real, and $\la_1A_1A_2\not =0$, we have $\det M \not =0$, otherwise $\la_1^2 \in \R$.

Thus $(b_1,\la_2)$ may be solved from~\eqref{eq:compat2} for $\eps$ sufficiently small by the implicit function theorem, providing the required
solution to~\eqref{eq:compatibility}, and so completing the 
proof of Proposition~\ref{prop:perturbation}. 
\end{proof}

\begin{proof}[Proof of Theorem~\ref{thm:instability}]
We need only verify the assumptions of Proposition~\ref{prop:perturbation}
for the case of $u(x) = \cn(x;k)$, $T = 2K(k)$.
Since $u = \cn \in A_{2K}$, we have $u^2 = \cn^2 \in P_T$.
Moreover, \eqref{eq:assumption} holds (see Figure~\ref{fig:spectra}). 
It remains to verify the condition~\eqref{eq:ass2}. 
The values of the
coefficients for the equations of $b_0$ and $\lambda_1$ are given by
the following formulas, obtained by using the
equation verified by $\cn$ and the explicit expressions given by 
Lemma~\ref{lem:explicit}.  Due to the complicated nature of the
expressions, the dependence of $E$ and $K$ on $k$ will be left implicit. 
\begin{align*}
A_1&=\ps{\varphi_1}{\varphi}=\ps{\phi_1}{u}=
\frac{k^2K(2E-K)+(E-K)^2}{2k^2(E(1-2k^2)-K(1-k^2))},\\
A_2&=\ps{\psi_1}{\psi}=\ps{\xi_1}{u_x}=\frac{k^2K(2E-K)+(E-K)^2}{2k^2(E-(1-k^2)K)},\\
B
   &=2i\left(\ps{\varphi_1}{D\psi}-\ps{\psi_1}{\psi}\right)=2i\left(\ps{\phi_1}{u_{xx}}-A_2\right)\\
&=-i\frac{2EK(k-1)(k+1)(K-E)}{(E(1-2k^2)-K(1-k^2))(E-(1-k^2)K)},
 \\
C_1&=\ps{\varphi}{\varphi}-4\ps{\psi_1}{\psi}=\ps{u}{u}-4A_2=\frac{2K^2(k-1)(k+1)}{E-(1-k^2)K},\\
C_2&=\ps{\psi}{\psi}-4\dual{\tilde
     L_+^{-1}D\psi}{D\psi}=\ps{u_x}{u_x}-4\dual{
     L_+^{-1}u_{xx}}{u_{xx}}\\
&=\frac{2K^2(k-1)(k+1)}{E(1-2k^2)-K(1-k^2)}.
\end{align*}
Therefore, 
\begin{multline*}
(A_1C_2 +A_2C_1-B^2)^2-4 A_1A_2 C_1C_2\\=
-\frac{16K^4E^2(1-k)^3(1+k)^3(K-E)^2}{k^2(E-(1-k^2)K)^2(E(1-2k^2)-(1-k^2)K
)^2}<0,
\end{multline*}
Thus Proposition~\eqref{prop:perturbation} applies, providing an unstable eigenvalue
of $J (\mathcal L^{\cn})^\theta$ for $\theta = \frac{\pi}{2K} - \eps$, and all 
$0 < \eps \leq \eps_0$. It follows in particular that $\cn$ is unstable against 
perturbations with period $4nK$, where $n$ is the smallest even integer
$\geq \frac{\pi}{K \eps_0}$.  
This concludes the proof of Theorem~\ref{thm:instability}.
\end{proof}

\subsection{Numerical Spectra}

We have tested numerically the spectra of the different operators
involved. To this aim, we used a fourth order centered finite difference
discretization of the second derivative operator. Unless otherwise
specified, we have used $2^{10}$ grid points. The spectra are then
obtained using the built in function of our scientific computing
software (Scilab). Whenever the spectra can be theoretically
described, the theoretical description and our numerical computations
are in good agreement. 

We start by the presentation of the spectra of $J\mathcal L^{\pq}$, for
$\pq=\cn,\dn,\sn$ on $P_{4K}$.

\begin{observation}
 On $P_{4K}$, the spectrum of  $J\mathcal L^{\pq}$ is such that
 \begin{itemize}
 \item if $\pq=\sn$ then $\sigma(J\mathcal L^{\sn})\subset i\R$ for all $k\in(0,1)$,
\item if $\pq=\cn$, then $\sigma(J\mathcal L^{\cn})\subset i\R$ for all
  $k\in(0,1)$, including when $k>k_c$,
\item if $\pq=\dn$, then $J\mathcal L^{\dn}$ admits two double
  eigenvalues $\pm\lambda$ with $\lambda>0$ and the rest of the
  spectrum verifies
$(\sigma(J\mathcal L^{\dn})\setminus\{\pm\lambda\})\subset i\R$ for all
  $k\in(0,1)$. 
 \end{itemize}
The numerical observations for $\cn$ and $\dn$ at $k=0.95$ are
represented in Figure~\ref{fig:sfig4}.
\end{observation}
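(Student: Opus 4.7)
The statement here is an \emph{observation} rather than a theorem, so the ``proof'' is really a plan to reconcile the numerical experiments with the analytical results already established in the paper, and to explain the setup of the numerics themselves. First, I would describe the discretization scheme: replace $\partial_{xx}$ by a fourth-order centered finite difference on a uniform grid of $2^{10}$ points covering $[0,4K(k))$ with periodic boundary conditions; assemble the two diagonal blocks $L_+^{\pq}$ and $L_-^{\pq}$ from the formulas in Section~\ref{sec:stability}; and form the block matrix approximating $J\mathcal L^{\pq}$. I would then invoke Scilab's dense eigensolver and check convergence by doubling the grid, so that numerically observed eigenvalues can be separated from discretization artefacts.

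For the $\sn$ and $\cn$ parts, the plan is to match numerics directly with theory. For $\sn$, spectral stability in $P_{4K}$ is already a consequence of Theorem~\ref{thm:linear-stability}, so nothing beyond a visual confirmation of $\sigma(J\mathcal L^{\sn})\subset i\R$ is required. For $\cn$ with $k<k_c$, the same theorem applies; for $k\geq k_c$ the analytical tool of Lemma~\ref{lem:abstract-coercivity} fails because the Krein-signature energy $4K(k)-8E(k)$ becomes non-negative, so I would only claim that the numerical spectrum remains on $i\R$ and defer to the Evans-function argument of \cite{IvLa08} for a rigorous justification in that range. The main (theoretical) obstacle here is that our energy-coercivity method cannot cross $k=k_c$; any proof for the full range would presumably require integrability or Evans-function techniques outside the scope of this paper.

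For $\dn$, the structure asserted by the observation (two real double eigenvalues $\pm\lambda$ and the rest on $i\R$) matches what the analytical picture gives. The decomposition~\eqref{eq:spec-decomp} splits $\sigma(J\mathcal L^{\dn}|_{P_{4K}})$ into contributions from $P_{2K}$ and from $A_{2K}^{+}\oplus A_{2K}^{-}$. On $P_{2K}$ we have $\sigma(J\mathcal L^{\dn}|_{P_{2K}})\subset i\R$ by Proposition~\ref{prop:orbital-spectral}. On each of $A_{2K}^{+}$ and $A_{2K}^{-}$ the instability proposition at the start of Section~\ref{sec:instability} provides exactly one pair $\pm\mu$ of non-zero real eigenvalues via the $(L_-^{\dn})^{1/2}L_+^{\dn}(L_-^{\dn})^{1/2}$ construction. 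Since the two subspaces produce the \emph{same} $\mu$ (both $L_{\pm}^{\dn}$ act identically and have the same negative-direction eigenpair in $A_{2K}^{+}$ and $A_{2K}^{-}$ -- see Figure~\ref{fig:spectra}), one obtains a geometric multiplicity $2$, consistent with the numerics. The last step I would carry out is to verify numerically that, away from this pair, the $A_{2K}^{\pm}$ spectrum of $J\mathcal L^{\dn}$ is purely imaginary; proving this analytically is the main obstacle, since the abstract coercivity lemma does not apply once $L_+^{\dn}$ carries a negative direction on the subspace, and the known counting tools (such as \cite{HaKa08}) would at best bound the number of unstable eigenvalues rather than rule out further ones outside $i\R$.
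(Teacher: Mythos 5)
Your overall plan coincides with what the paper actually does: this statement is a numerical observation, established by discretizing $L_{\pm}^{\pq}$ with a fourth-order centered finite difference scheme on $2^{10}$ grid points, computing the spectrum of the resulting block matrix with Scilab's built-in eigensolver, and checking consistency with Theorem~\ref{thm:linear-stability}, Proposition~\ref{prop:orbital-spectral}, and the Evans-function result of \cite{IvLa08} for $\cn$ with $k\ge k_c$. Up to that point your proposal matches the paper.

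The one genuine error is your explanation of why the unstable eigenvalues of $J\mathcal L^{\dn}$ are \emph{double}. You assert that $A_{2K}^{+}$ and $A_{2K}^{-}$ carry ``the same negative-direction eigenpair'' of $L_{\pm}^{\dn}$ and hence produce the same $\mu$. Figure~\ref{fig:spectra} says otherwise: in $A_{2K}^{+}$ the negative direction of $L_+^{\dn}$ is $\sn_x$ with eigenvalue $-3$, while in $A_{2K}^{-}$ it is $\cn_x$ with eigenvalue $3(k^2-1)$; likewise the lowest eigenvalue of $L_-^{\dn}$ is $1-k^2$ (eigenfunction $\cn$) on $A_{2K}^{+}$ but $1$ (eigenfunction $\sn$) on $A_{2K}^{-}$. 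The $(L_-^{\dn})^{1/2}L_+^{\dn}(L_-^{\dn})^{1/2}$ construction therefore yields a pair of nonzero real eigenvalues in \emph{each} subspace but gives no reason for the two values of $\mu$ to coincide, and there is no evident symmetry of $J\mathcal L^{\dn}$ exchanging $A_{2K}^{+}$ and $A_{2K}^{-}$ (translation by $K$ does not preserve $\dn^2$). The coincidence of the two pairs --- i.e.\ the doubleness of $\pm\lambda$ --- is exactly the part of the observation that is only seen numerically in the paper and is not derivable from the spectral data you quote. Your final caveat is, on the other hand, well placed: the paper likewise does not prove analytically that the rest of the $A_{2K}^{\pm}$ spectrum of $J\mathcal L^{\dn}$ lies on $i\R$, so identifying that as the remaining open step is correct.
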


\begin{figure}[htpb!]
  \centering
  \includegraphics[width=.5\linewidth]{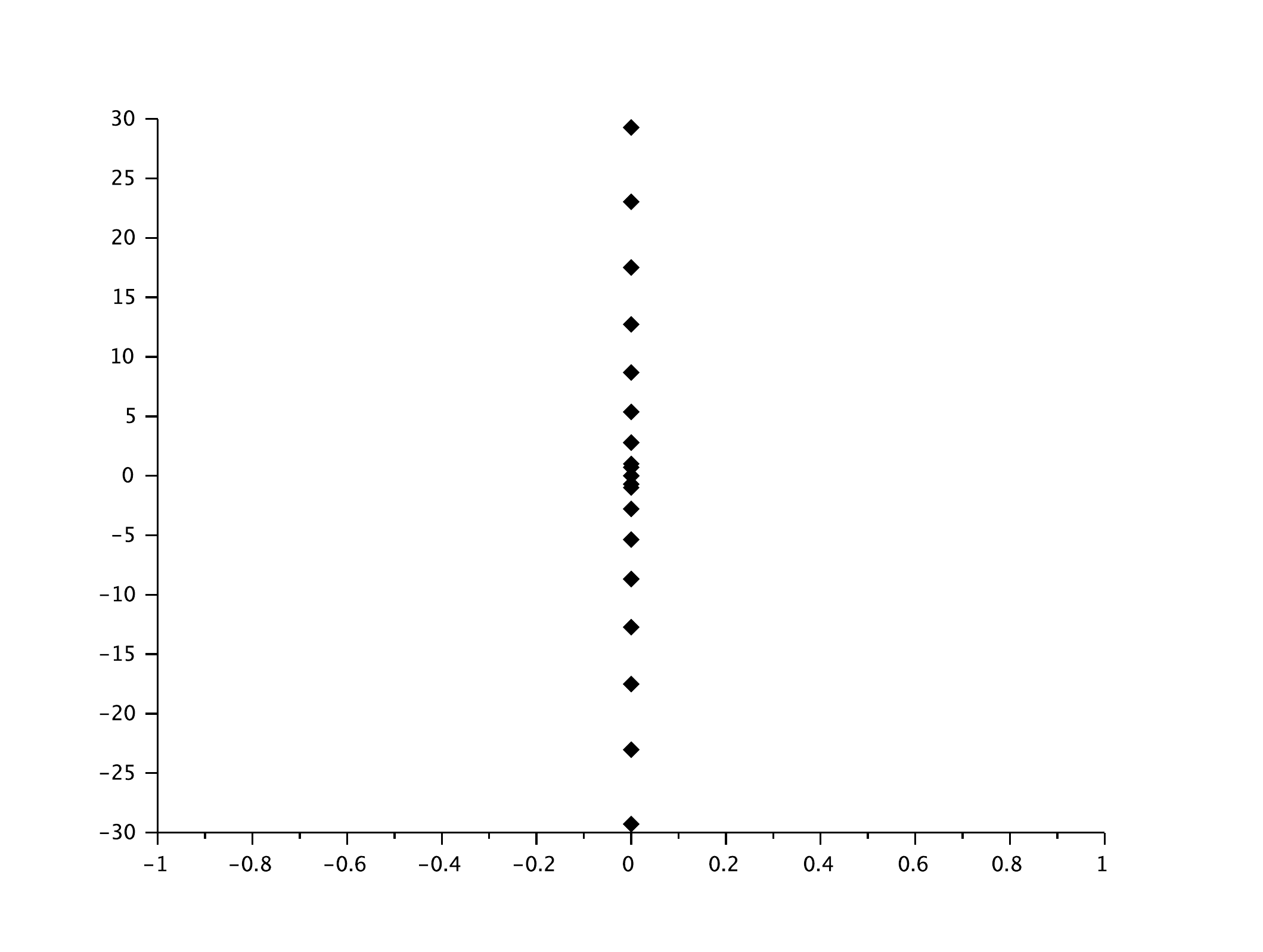}\hfill
  \includegraphics[width=.5\linewidth]{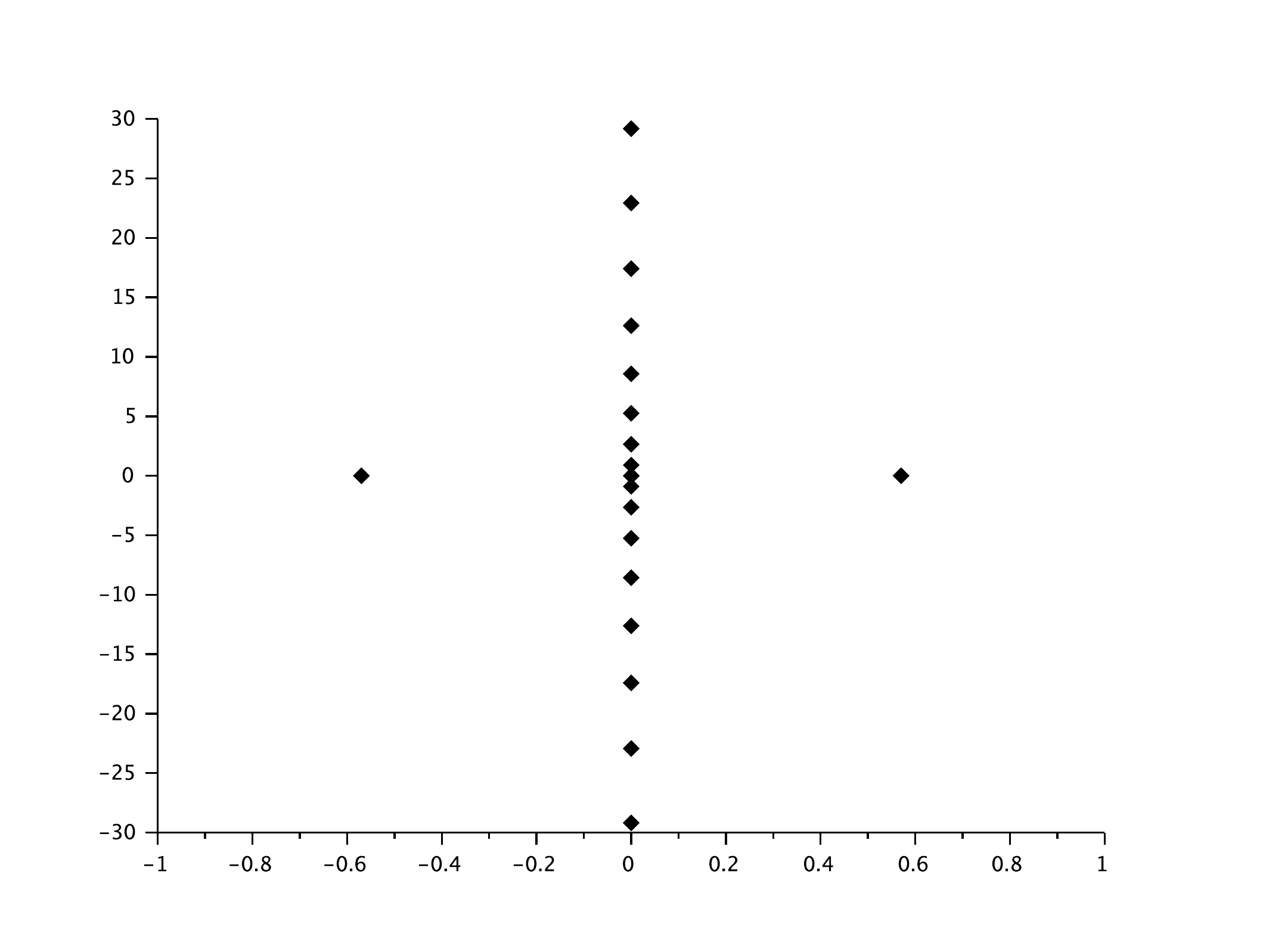}
  \caption{$\sigma(J\mathcal L^{\cn})$ (left) and $\sigma(J\mathcal
    L^{\dn})$ (right) on $P_{4K}$ for $k=0.95$}
  \label{fig:sfig4}
\end{figure}

\FloatBarrier

We then compare the results of Theorem~\ref{thm:instability} with the
numerical results. In Figure~\ref{fig:sfig5}, we have drawn the numerical spectrum of
$J\mathcal L ^{\cn}$ as an operator on $L^2(\R)$. To this aim, we have
used the Bloch decomposition of the spectrum of  $J\mathcal L ^{\cn}$
given in~\eqref{eq:bloch}: we computed the spectrum of $J(\mathcal L
^{\cn})^\theta$ for $\theta$ in a discretization of
$(0,\frac{\pi}{2K}]$ and we have interpolated between the values
obtained to get the curve in plain (blue) line. In order to keep the
computation time reasonable, we have dropped the number of space
points from $2^{10}$ to $2^8$.
We then have drawn in
dashed (red) the
straight lines passing through the origin and the points whose
coordinates are given in the complex plane by
$\pm\lambda_1,\pm\bar\lambda_1$, $\lambda_1$  given in the proof of
Proposition~\ref{prop:perturbation}. The picture shows that the dashed
(red) line are tangent to the plain
(blue) curve, thus confirming $\lambda_1$ as the first order in the
expansion for the eigenvalue emerging from $0$ performed in
Proposition~\ref{prop:perturbation}.

\begin{figure}[htpb!]
  \centering
  \includegraphics[width=.7\linewidth]{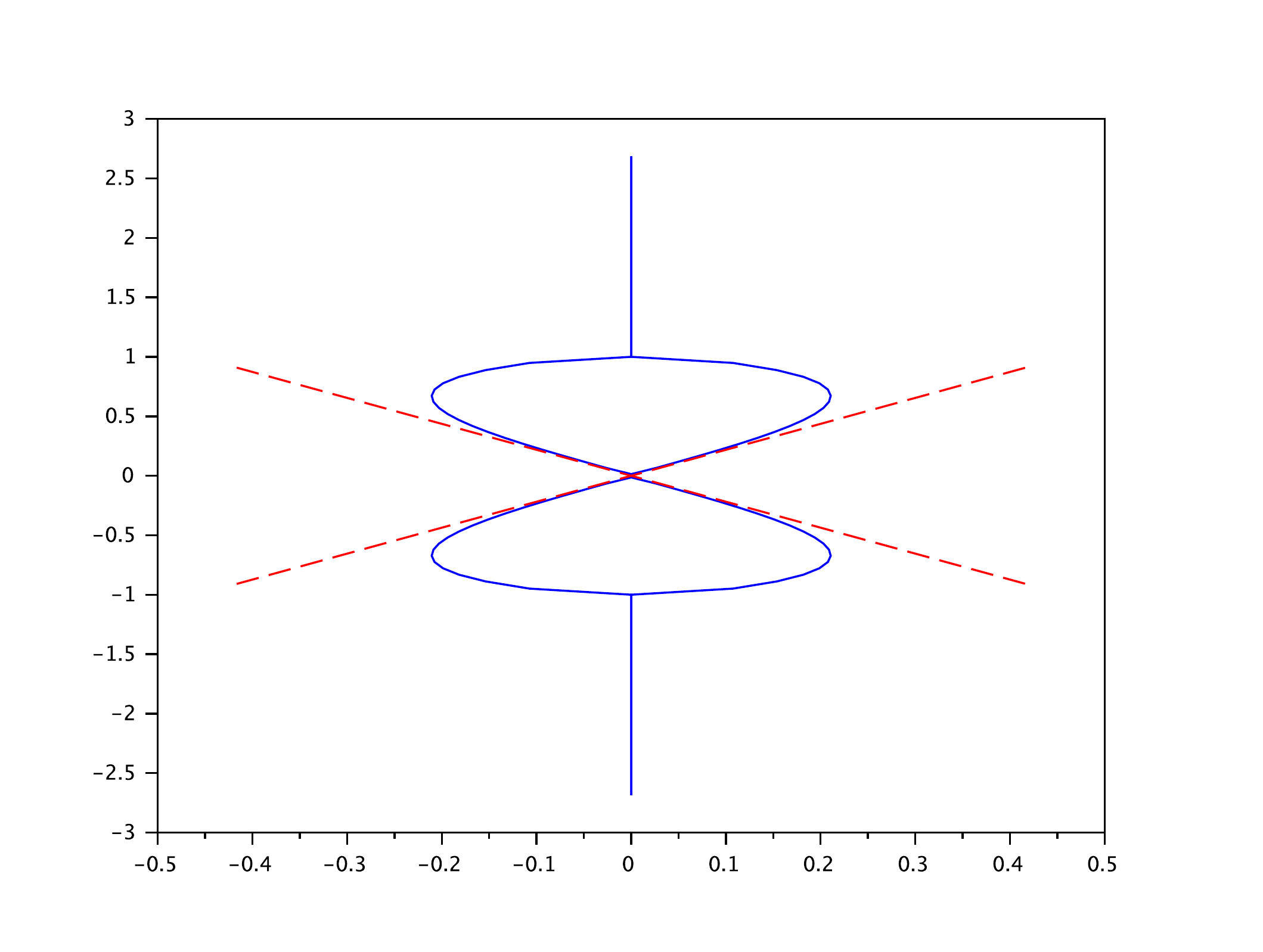}
  \caption{$\sigma(J\mathcal L^{\cn})$ on $L^2(\R)$ for $k=0.9$ (plain (blue)
    curve), first order asymptotic around $0$ (dashed (red) lines) }
  \label{fig:sfig5}
\end{figure}
\FloatBarrier

Numerically, eigenvalues on the number 8 curve in Figure~\ref{fig:sfig5} are simple, and move from the origin toward the intersection points of the number 8 curve with the imaginary axis, when $\theta$ is decreased from $\pi/(2K)$ to $0^+$.

These eigenvalues are simple because we did the Block decomposition \eqref{eq:bloch} in $P_{2K}$ with $\theta \in  [0,2\pi/T)=[0,\pi/K)$, and $\cn$ is only in $P_{2K}(-1)$, not in $P_{2K}$. Thus it is in the kernel of $L_+^\theta$ only for $\theta = \pi/(2K)$. The bifurcation occurs only near $\theta=\pi/(2K)$, not at $\theta=0$.

In contrast, Rowlands \cite{Row74} did the Block decomposition in $P_{4K}$ with $\theta \in [0,\pi/(2K))$. We have $\cn \in P_{4K}$, and $\cn$ is in the kernel of $L_+^\theta$ only for $\theta = 0$.  The bifurcation occurs only near $\theta=0$.

These two approaches are essentially the same, and our approach does not give a new instability branch.

\section{Numerics}
\label{sec:numerics}

We describe here the numerical experiments performed to understand
better the nature of the Jacobi elliptic functions as constrained minimizers of some functionals. 
To this aim, we use a normalized gradient flow approach related to the
minimization problem~\eqref{eq:min-prob-m-p}.

\subsection{Gradient Flow With Discrete Normalization}

It is relatively natural when dealing with constrained minimization problems like
\eqref{eq:min-prob-m-p}-\eqref{eq:min-prob-m-A} to use the following construction. 
Define an
increasing sequence of time $0=t_0<\dots<t_n$ and take an initial data
$u_0$. Between each time step,
let $u(t,x)$ evolve along the gradient flow
\[
\left\{
\begin{aligned}
u_t&=-\mathcal E'(u)=u_{xx}+b|u|^2u, \\
u(t_n,x)&=u_n(x),
\end{aligned}
\right.
\quad x\in\R,\;t_n<t<t_{n+1},\;n\geq 0.
\]
At each time step $t_n$, the function is renormalized so as to have
the desired mass and momentum. The renormalization for the mass is
obtained by a straightforward scaling:
\begin{equation}
  \label{eq:renormalization}
u_{n+1}(x):=u(t_{n+1},x)\sqrt{\frac{m}{\mathcal M(u(t_{n+1},x))}}. 
\end{equation}
When there is no momentum,  like in the minimization problems
\eqref{eq:min-prob-m},~\eqref{eq:min-prob-m-A}, and only real-valued functions are considered, such approach to compute the minimizers was
developed by Bao and Du \cite{BaDu04}.

However, dealing with complex valued solutions and with an additional
momentum constraint as in problems~\eqref{eq:min-prob-m-p},~\eqref{eq:min-prob-m-p-A} turns out to make the problem more challenging
and to our knowledge little is known about the strategies that one can
use to deal with this situation (see \cite{ChSc16} for an approach on
a related problem). 

To construct $u_n$ in such a way that $\mathcal P(u_n)=p$, a simple scaling is
not possible for at least two reasons. First of all, if $p=0$, a
scaling would obviously lead to failure of our strategy. Second, even
if $p\neq0$, as we are already using a scaling to get the correct
mass, making a different scaling to obtain the momentum constraint
will result into a modification of the mass. To overcome
these difficulties, we propose the following approach. 

Recall that, as noted in \cite{BaDu04}, the renormalizing step
\eqref{eq:renormalization} is equivalent to solving exactly the
following ordinary differential equation
\begin{equation}
\label{eq:2}
u_t=\mu_n u,
\quad t_n<t<t_{n+1},
\quad n\geq 0,
\quad \mu_n=\frac{1}{t_{n+1}-t_n}\ln\left(\frac{\sqrt{2m}}{\norm{u(t_n)}_{L^2}}\right).
\end{equation}
Inspired by this remark, we consider the following problem,
which we see as the equivalent of~\eqref{eq:2} for the
momentum renormalization. 
\begin{equation}
\label{eq:3}
u_t=i\varpi_n u_x,
\quad x\in\R,
\quad t_n<t<t_{n+1},
\quad n\geq 0,
\end{equation}
where we want to choose the values of $\varpi_n$ in such a way that
\(
\mathcal P(u(t_{n+1}))=p.
\)
To this aim, we need to solve~\eqref{eq:3}. Note that~\eqref{eq:3} is
a partial differential equation, whereas~\eqref{eq:2} was only an
ordinary differential equation. We make the following formal
computations, which can be justified if the functions involved are
regular enough. As we work with periodic functions, we consider
the Fourier series representation of $u$, that is
\[
u(t,x)=\sum_{j=-\infty}^{\infty}c_j (t)e^{i \frac{2\pi}{T}jx}
\]
with the Fourier coefficients
\[
c_j(t) = \frac{1}{T} \int_{-T/2}^{T/2} u(t,x) e^{-i \frac{2\pi}{T}jx} dx.
\]
Then~\eqref{eq:3} becomes
\begin{equation*}
\partial_tc_j=-\frac{2\pi}{T}j\varpi_n c_j, \quad j\in\mathbb Z,
\quad t_n<t<t_{n+1},\quad n\geq 0.
\end{equation*}
For each $j\in\mathbb Z$ and for any $t_n< t<t_{n+1}$ the solution is
\[
c_j(t)=\exp\left( -\frac{2\pi}{T}j\varpi_n (t-t_n)\right)c_j(t_n),
\]
 and
therefore the solution of~\eqref{eq:3} is
\[
u(t,x)=\sum_{j=-\infty}^{\infty}\exp\left( -\frac{2\pi}{T}j\varpi_n (t-t_n)\right)c_j(t_n)e^{i \frac{2\pi}{T}j x}.
\]
Using this Fourier series expansion of $u$, we have 
\[
\mathcal P(u(t_{n+1}))=-\sum_{j=-\infty}^{\infty} \pi j \exp\left( -\frac{4\pi}{T}j\varpi_n (t_{n+1}-t_n)\right)\abs{c_j(t_n)}^2.
\]
We determine implicitly the value of $\varpi_n$, by requiring that
$\varpi_n$ is such that
\[
\mathcal P(u(t_{n+1}))=p.
\]
In practice, it
might not be so easy to compute $\varpi_n$ and therefore we shall use
the following approximation. We replace the exponential by its first order
Maclaurin polynomial. We get 
\[
\mathcal P(u(t_{n+1}))=-\sum_{j=-\infty}^{\infty} \pi j \left(1-
  \frac{4\pi}{T}j\varpi_n (t_{n+1}-t_n)\right) \abs{c_j(t_n)}^2+\mathcal
O(\varpi_n^2 (t_{n+1}-t_n)^2).
\]
Therefore, an approximation for $\varpi_n$ is given by
$\tilde\varpi_n$, which is defined implicitly by
\[
p=-\sum_{j=-\infty}^{\infty} \pi j \left(1-
  \frac{4\pi}{T}j\tilde\varpi_n (t_{n+1}-t_n)\right) \abs{c_j(t_n)}^2.
\]
Solving for $\tilde\varpi_n $, we obtain
\[
\tilde\varpi_n=\bigg(p+\sum_{j=-\infty}^{\infty} \pi j \abs{c_j(t_n)}^2\bigg)\bigg((t_{n+1}-t_n) \frac{4\pi^2}{T}\sum_{j=-\infty}^{\infty} j^2 \abs{c_j(t_n)}^2\bigg)^{-1}.
\]
We can further simplify the expression of $\tilde\varpi_n $ by
remarking that
\[
\mathcal P(u(t_n))=-\sum_{j=-\infty}^{\infty} \pi j \abs{c_j(t_n)}^2,\quad
\int_{-T/2}^{T/2}|\partial_xu(t_n)|^2dx=\frac{4
  \pi^2}{T}\sum_{j=-\infty}^{\infty} j^2 \abs{c_j(t_n)}^2.
\]
This gives 
\[
\tilde\varpi_n=\frac{p-\mathcal P(u(t_n))}{(t_{n+1}-t_n) \norm{\partial_x u(t_n)}_{L^2}^2}.
\]
This is the value we will use in practice.  

\subsection{Discretization}

Let us now further discretize our problem. We first present a
semi-implicit time discretization, given by the following scheme. 
\begin{gather*}
  \frac{\tilde u_{n+1}- u_n}{\delta t}=\partial_{xx}\tilde
  u_{n+1}+b|u_n|^2\tilde u_{n+1}, 
\quad\tilde u_{n+1}\in P_{T},
\\ 
\hat u_{n+1}=\sum_{j=-\infty}^{\infty}  c_j(\tilde u_{n+1})
\left(1-\frac{2\pi}{T}\delta t\tilde\varpi_nj\right)e^{i\frac{2\pi}{T}jx},
\\
 u_{n+1}=\hat u_{n+1} \sqrt{\frac{m}{\mathcal M(\hat
    u_{n+1})}},
\end{gather*}
where $\tilde\varpi_n$ is given by 
\[
\tilde\varpi_n=\frac{p-\mathcal P(u_n)}{\delta t \norm{\partial_x u_n}_{L^2}^2},
\]
and $(c_j(\tilde u_{n+1}))$ are the Fourier coefficients of $\tilde
u_{n+1}$. 
Note that the system is linear. 

\begin{remark}
  If $p=0$, at the end of each step, $u_{n+1}$ has the desired mass and
  momentum. If $p\neq 0$, then $u_{n+1}$ only has the desired
  mass and it is unclear if the algorithm will still give
  convergence toward the desired mass-momentum constraint
  minimizer. We plan to investigate this question in further works. 
\end{remark}

Finally, we present the fully discretized problem. We discretize the
space interval $\left[-\frac T2,\frac T2\right]$ by
setting
\[
x^0=-\frac T2,\quad x^l=x^0+l\delta x, \quad \delta x=\frac TL, \quad L\in
2\mathbb N. 
\]
We denote by $u_n^l$ the numerical approximation of
$u(t_n,x^l)$. Using the (backward Euler) semi-implicit scheme for time
discretization and second-order centered finite difference for spatial
derivatives, we obtain the following scheme. 

\begin{gather}
  \label{eq:333}
  \frac{\tilde u_{n+1}^l- u_n^l}{\delta t}=\frac{\tilde
  u_{n+1}^{l-1}-2\tilde
  u_{n+1}^l+\tilde
  u_{n+1}^{l+1}}{\delta x^2}+b|u_n^l|^2\tilde u_{n+1}^l, \quad
 u_{n+1}^0=u_{n+1}^L,
\\
\hat u_{n+1}^l=
\sum_{j=-L/2}^{L/2}  c_j(\tilde u_{n+1})
 \left(1-\frac{2\pi}{T}\delta t\tilde\varpi_nj\right)e^{i\frac{2\pi}{L}jl \delta x},  \label{eq:333-2}
\\
\tilde u_{n+1}^l=\hat  u_{n+1}^l \sqrt{\frac{m}{\mathcal M(\hat    u_{n+1})}},\label{eq:333-3}
\end{gather}
where $c_j(\tilde u_{n+1})=\frac{1}{L+1}\sum_{l=0}^{L}  
\tilde u_{n+1}^l e^{i\frac{2\pi}{L}jl \delta x}.$

As the system~\eqref{eq:333} is linear, we can solve it using a Thomas algorithm for tridiagonal matrix modified to take into
account the periodic boundary conditions. The discrete Fourier
transform and its inverse are computed using the built in Fast Fourier
Transform algorithm. 

We have not gone further in the analysis of the scheme presented
above. As shown in the next section, the outcome of the numerical experiments are in good
agreement with the theoretical results. We plan to further analyze and generalize our approach in future works.

\section{Numerical Solutions of Minimization Problems}
\label{sec:num-exp}
Before presenting the numerical experiments, we introduce some
notation for particular plane waves. Define
\[
\varphi_{\mu ,\rho}=\sqrt{ \frac{2\mu }{T}}e^{-i\frac{\rho}{\mu}
  x},\text{ the
  plane wave with }\mathcal M(\varphi_{\mu ,\rho})=\mu \text{ and
}\mathcal P(\varphi_{\mu ,\rho})=\rho.
\]

In the numerical experiments, we have chosen to fix $k=0.9$. The
period will be either $T=2K(k)$ or $T=4K(k)$. We use $2^{10}$
grid points for the interval $[-\frac T2,\frac T2]$. The time step
will be set to $1$. We decided to run the algorithm until a maximal difference of
$10^{-3}$ between the absolute values of the moduli of
$u_j^l$ and the expected minimizer has been reached.

We made the tests with the following initial data:
\begin{equation}\label{eq:initial-data}
(a)\, u_0(x)=5,\quad
(b)\, u_0(x)=\exp(2i\pi x/T),\quad
(c)\, u_0(x)=1+\cos(2i\pi x/T)+i.
\end{equation}
Depending on the expected profile, we may have shifted $u_j$ so that a minimum
or a maximum of its modulus is at
the boundary. Since the problem is translation invariant, this causes
no loss of generality. 

Since the initial  data $u_0$ in  \eqref{eq:initial-data} do not match the required mass/momentum, $u_1$ are very different from $u_0$. Thus  \eqref{eq:initial-data} is a random choice, and this shows up in the rapid drop from $t_0$ to $t_1$ in Figure \ref{fig:focusing-P-small-mass}.
The idea is to show that the choice of initial data is not important
for the algorithm and that no matter from where the algorithm is starting, it
converges to the supposed minimizer (unless the initial
  data has some symmetry preserved by the algorithm).

\subsection{Minimization Among Periodic Functions}

Minimization among periodic functions is completely covered by the
theoretical results Propositions~\ref{prop:focusing-P_T} and
\ref{prop:defocusing-P}. We have performed different tests using the
scheme described in~\eqref{eq:333}-\eqref{eq:333-3} and we have found
that the numerical results are in good agreement with the theoretical
ones. 

\subsubsection{The Focusing Case}

In all the experiments performed in this case, we have tested the
scheme with and without the momentum renormalization step
\eqref{eq:333-3} and we have obtained the same result each time. This
confirms that in the periodic case the momentum
constraint plays no role (see (i)   in Proposition
\ref{prop:focusing-P_T}, and Proposition~\ref{prop:defocusing-P}). In
what follows, we present only the results obtained using the full
scheme with renormalization of mass and momentum. 

We fix $T=2K(k)$ and $b=2$. We first perform an experiment to verify
the agreement with case (ii) in Proposition
\ref{prop:focusing-P_T}. Let
$m=\frac{\pi^2}{8K}<\frac{\pi^2}{bT}$. With each initial data in
\eqref{eq:initial-data}, we observe convergence towards the constant
solution, hereby confirming case (ii) of  Proposition
\ref{prop:focusing-P_T}. The results are presented in Figure
\ref{fig:focusing-P-small-mass} for
initial data (c) of~\eqref{eq:initial-data}. The requested precision is
achieved after $12$ time steps. 
\begin{figure}[htpb!]
  \centering
  \includegraphics[width=0.49\textwidth]{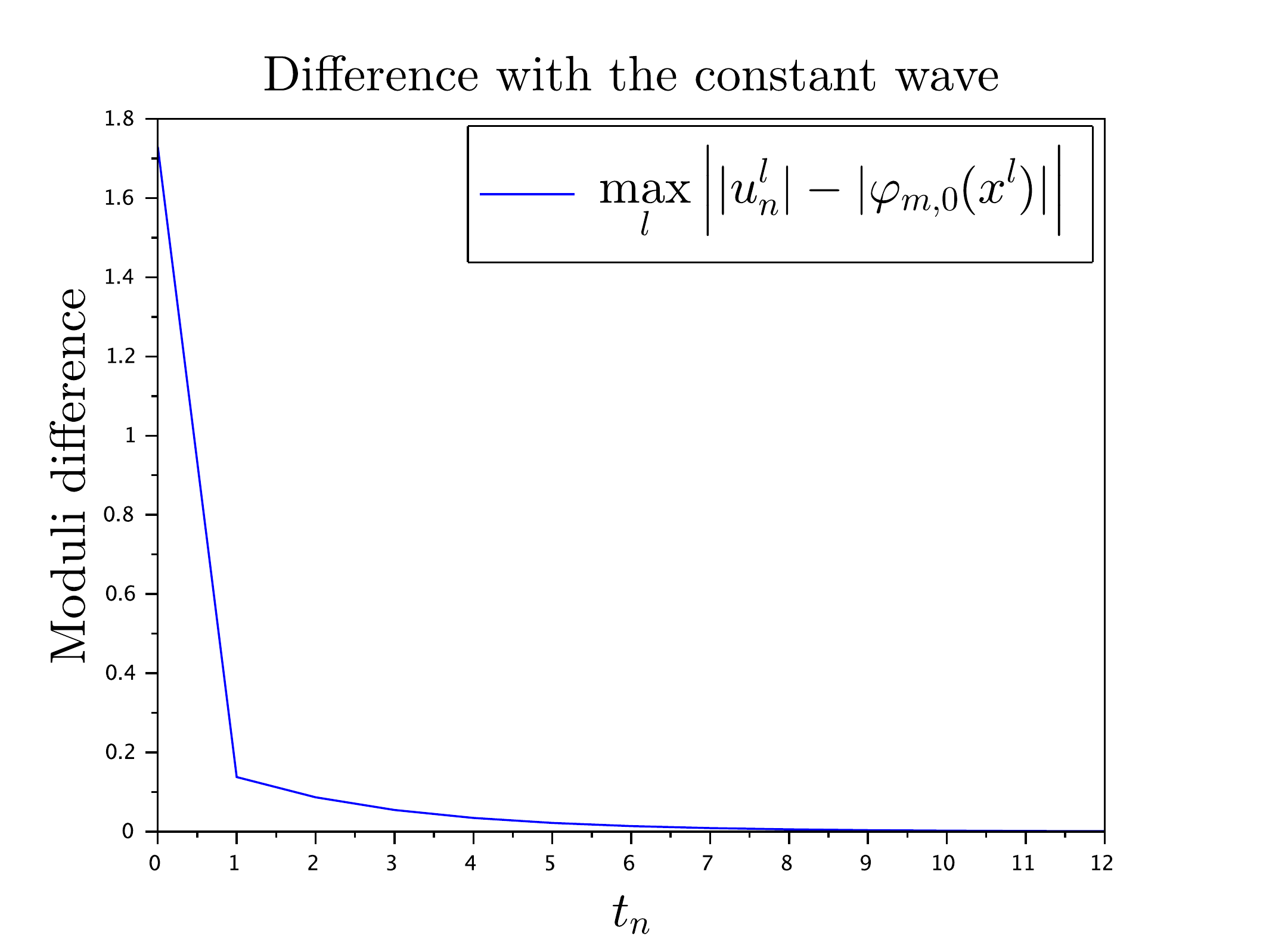}
  \hfill
  \includegraphics[width=0.49\textwidth]{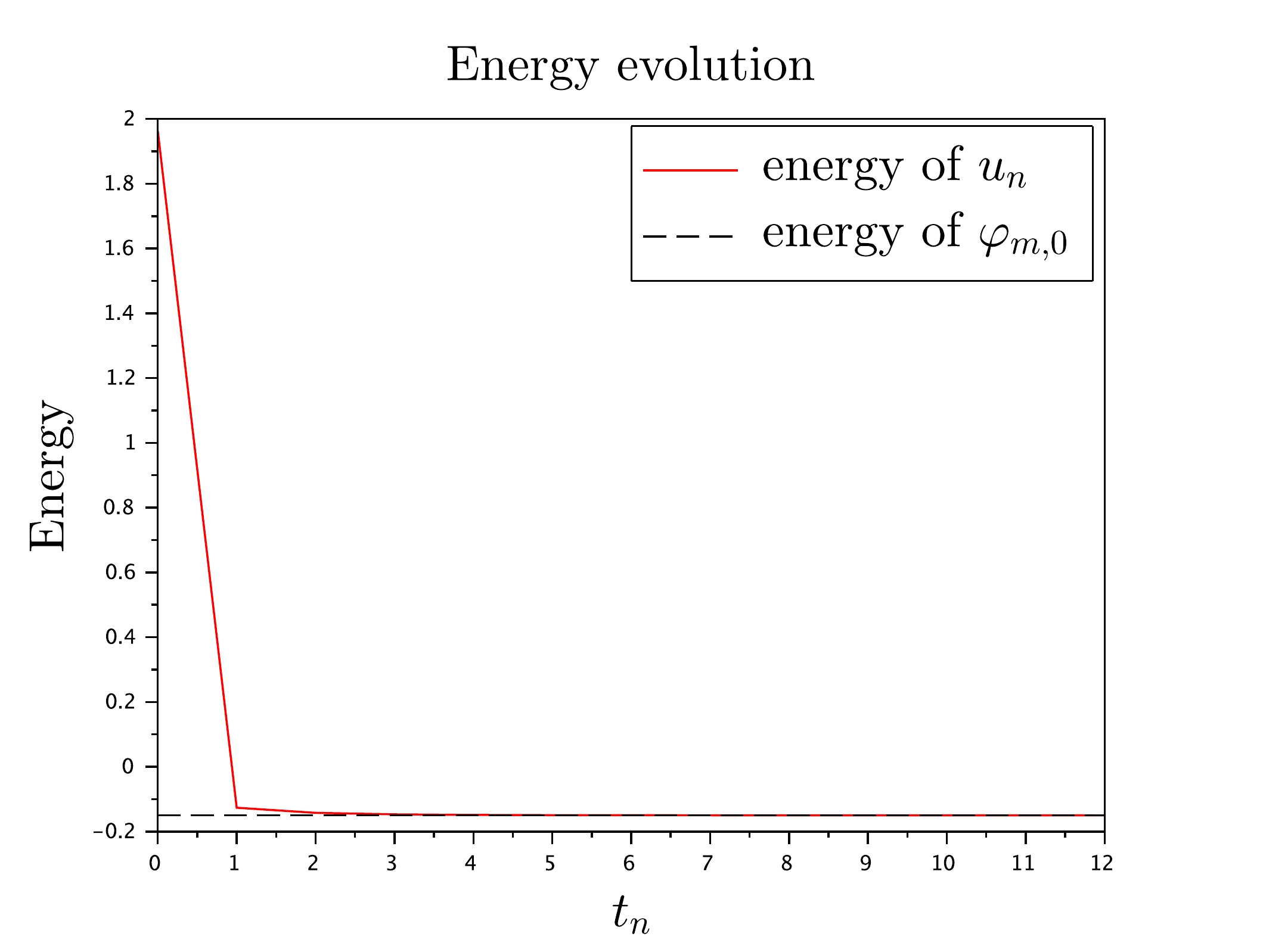}
  \caption{For $m=\frac{\pi^2}{8K}<\frac{\pi^2}{bT}$, focusing, periodic case}
  \label{fig:focusing-P-small-mass}
\end{figure}

The second experiment that we perform is aimed at testing case (iv) of
Proposition~\ref{prop:focusing-P_T}.  Let
$m=\mathcal M(\dn)=E(k)$. 
Once again we observe a good
agreement between the theoretical prediction and the numerical
experiment. The results are presented in Figure
\ref{fig:focusing-P-dn} for
initial data (c) of~\eqref{eq:initial-data}. The requested precision is
achieved after $14$ time steps. 
\begin{figure}[htpb!]
  \centering
  \includegraphics[width=0.49\textwidth]{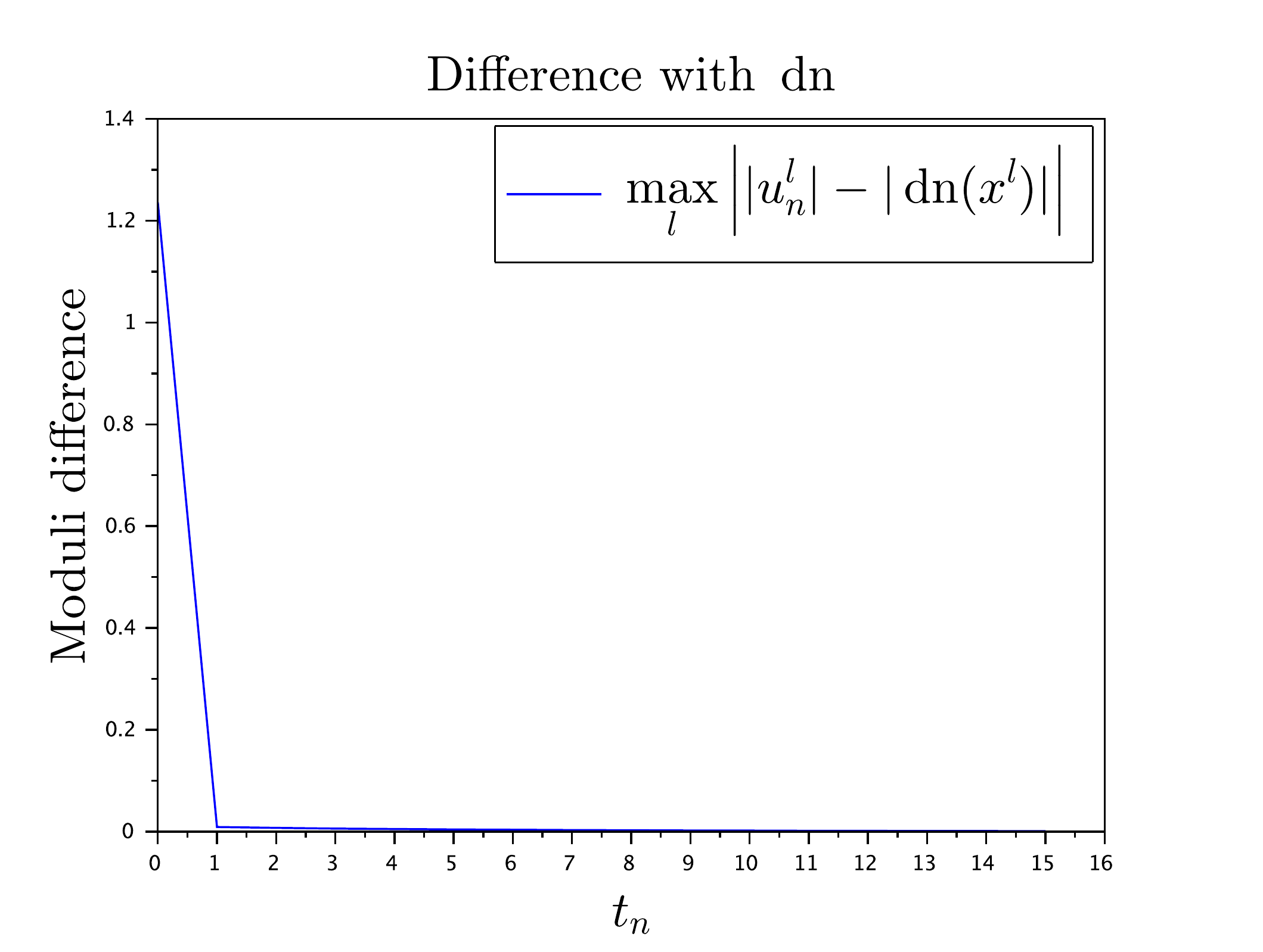}
  \hfill
  \includegraphics[width=0.49\textwidth]{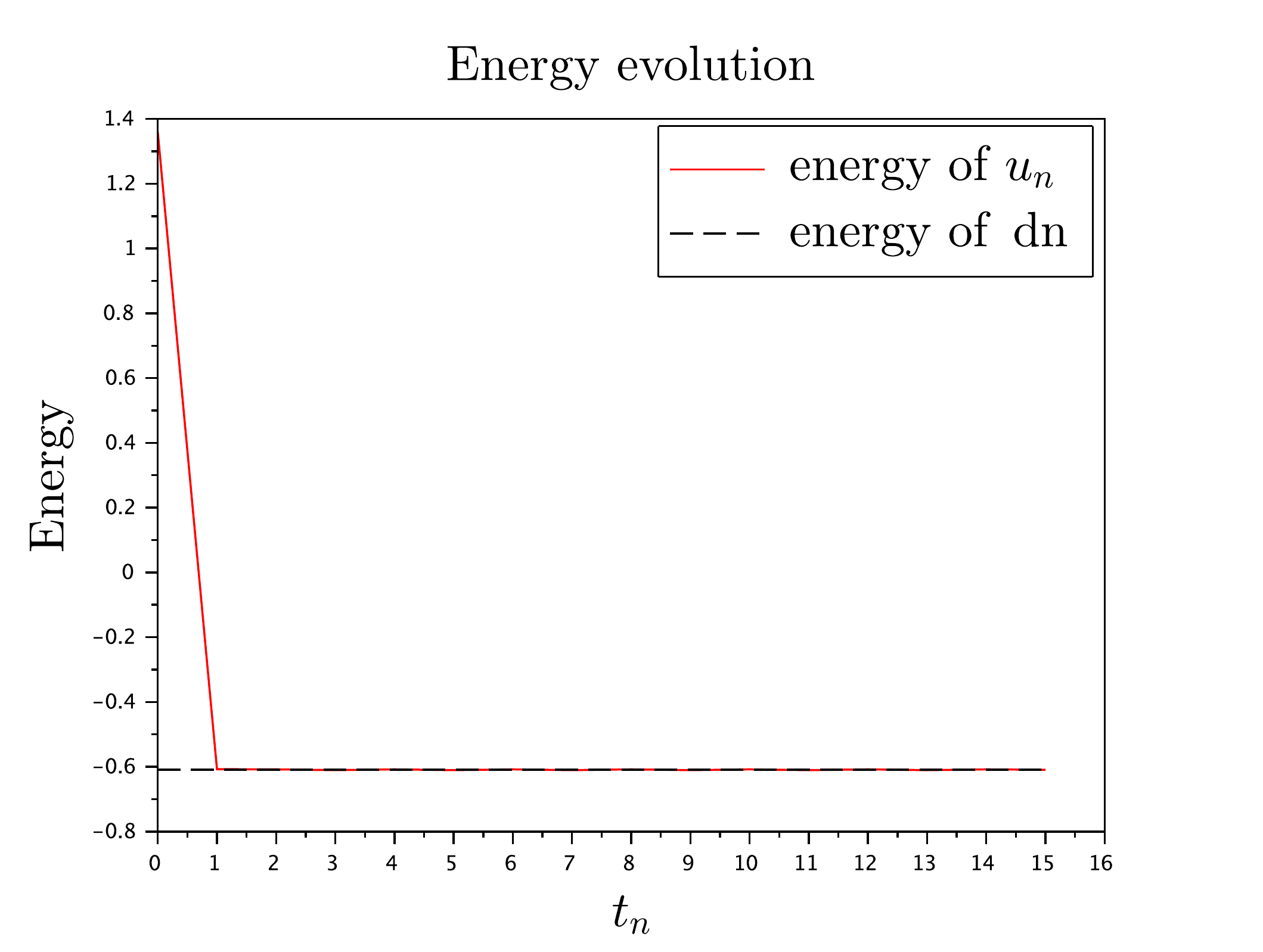}
  \caption{For $m=\mathcal M(\dn)=E(k)$, focusing, periodic case}
  \label{fig:focusing-P-dn}
\end{figure}

All the other experiments that we have performed show a good agreement
with the theoretical results  in the focusing case for minimization
among periodic functions. To avoid repetition, we give no further
details here. 

\subsubsection{The Defocusing Case}

We now present the experiment in the defocusing case. We have used
$b=-2k^2$ and $T=4K$. We have tested the algorithm with and without
the momentum renormalization step~\eqref{eq:333-3}, obtaining the same
results. The results are presented in Figure~\ref{fig:defocusing-P} for
initial data (c) of~\eqref{eq:initial-data} and mass constraint $m=\mathcal M(\sn)=\frac{2(K-E)}{k^2}$. 
The requested precision is
achieved after $6$ time steps. 

\begin{figure}[htpb!]
  \centering
  \includegraphics[width=0.49\textwidth]{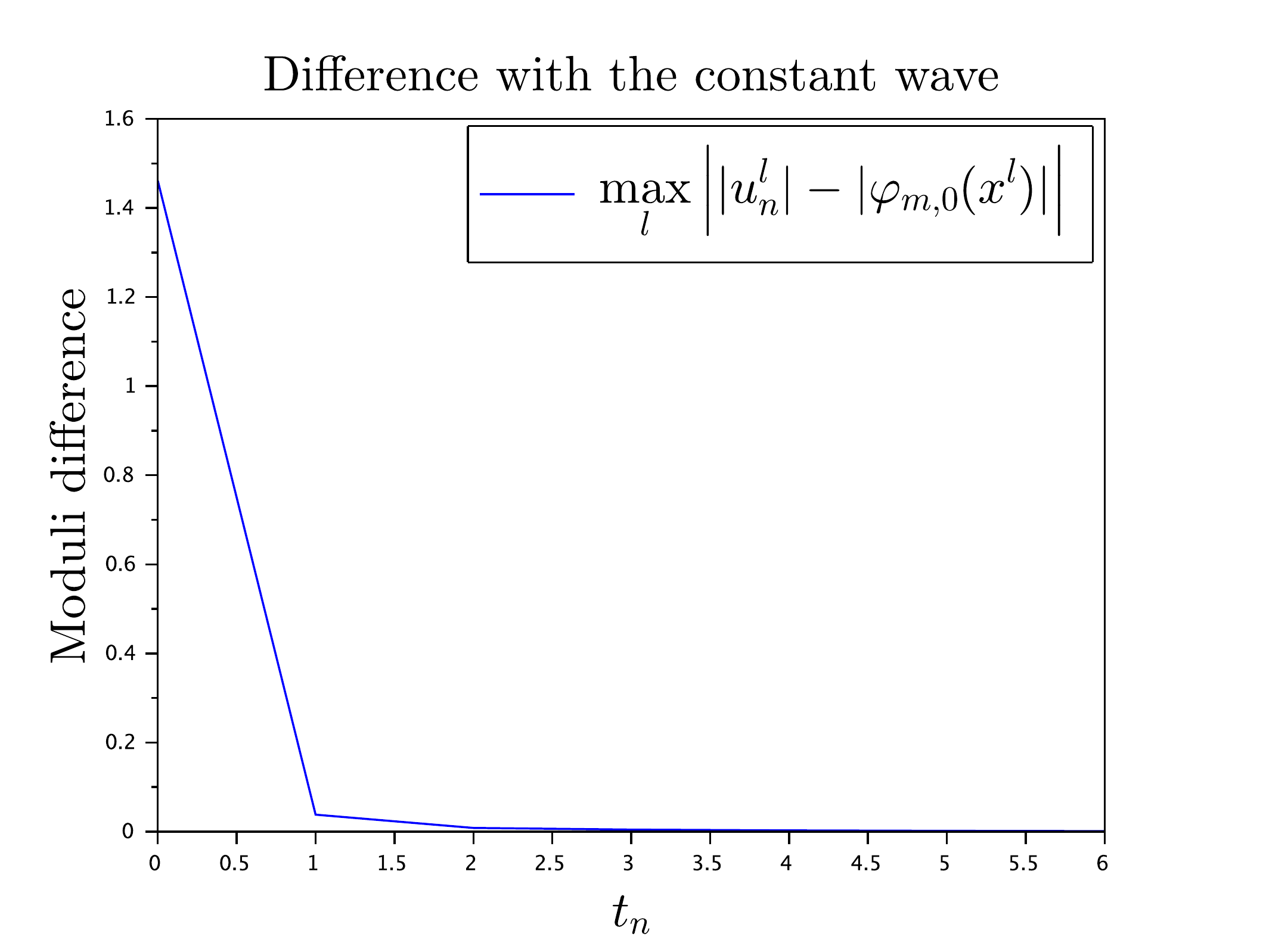}
  \hfill
  \includegraphics[width=0.49\textwidth]{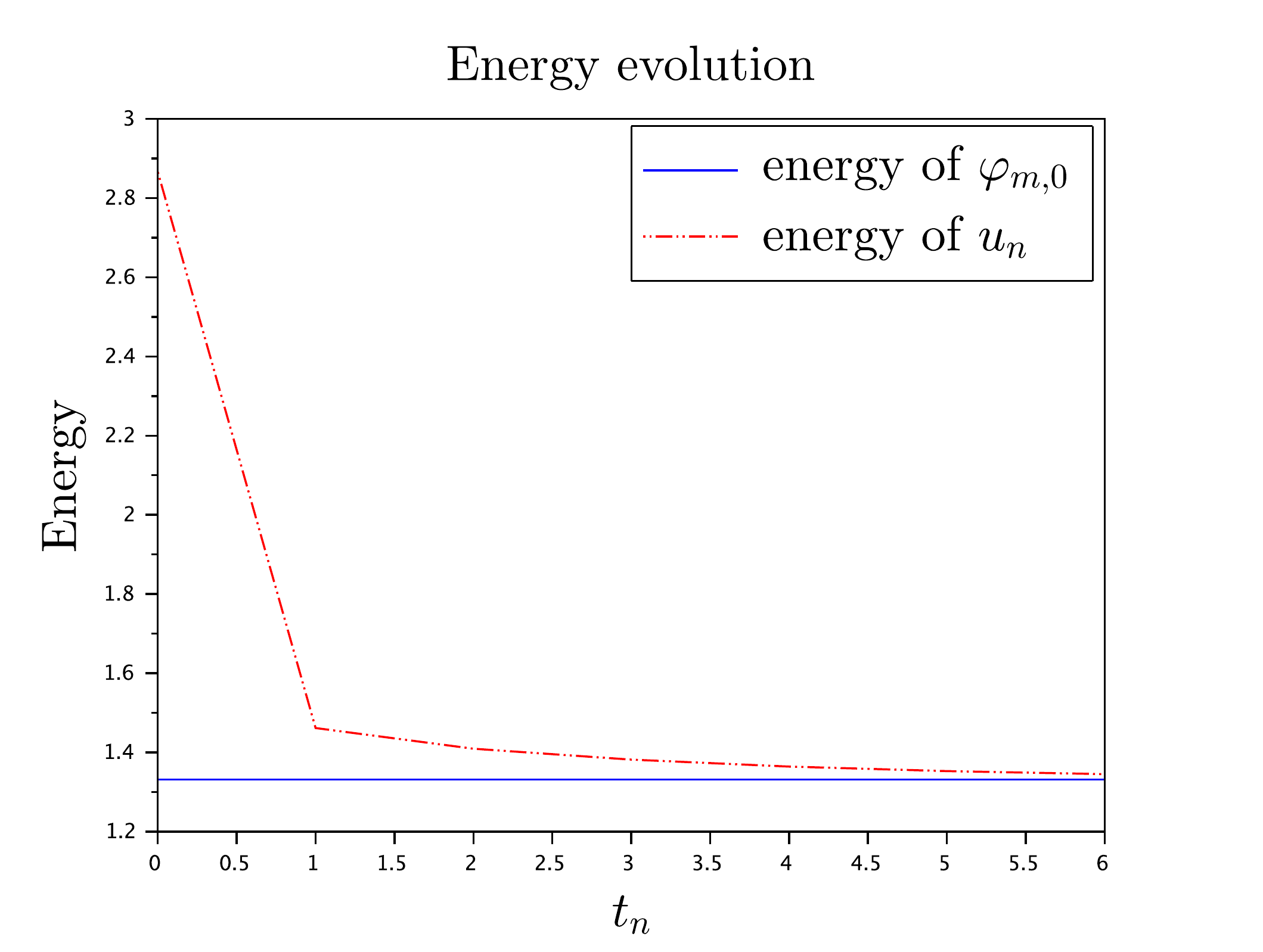}
  \caption{For $m=\mathcal M(\sn)=\frac{2(K-E)}{k^2}$, defocusing, periodic case}
  \label{fig:defocusing-P}
\end{figure}

\subsection{Minimization Among Half-Anti-Periodic Functions}

We will in that case
add an additional step in the algorithm in which we keep only the
anti-periodic part of the function. This way it will not matter wether
or not our initial data has the right anti-periodicity, since 
anti-periodicity will be forced at each iteration of the algorithm.

\subsubsection{The Focusing Case}
We compare in this section the numerical results with Proposition
\ref{prop:focusing-A}. We have used
$b=2k^2$ and $T=4K$. The tests performed show a good agreement
between the numerics and the theoretical result.  We present in Figure
\ref{fig:focusing-A} the result for
initial data (c) of~\eqref{eq:initial-data} and mass constraint $m=\mathcal M(\cn)=2(E-(1-k^2)K)/k^2$

\begin{figure}[htpb!]
  \centering
  \includegraphics[width=0.49\textwidth]{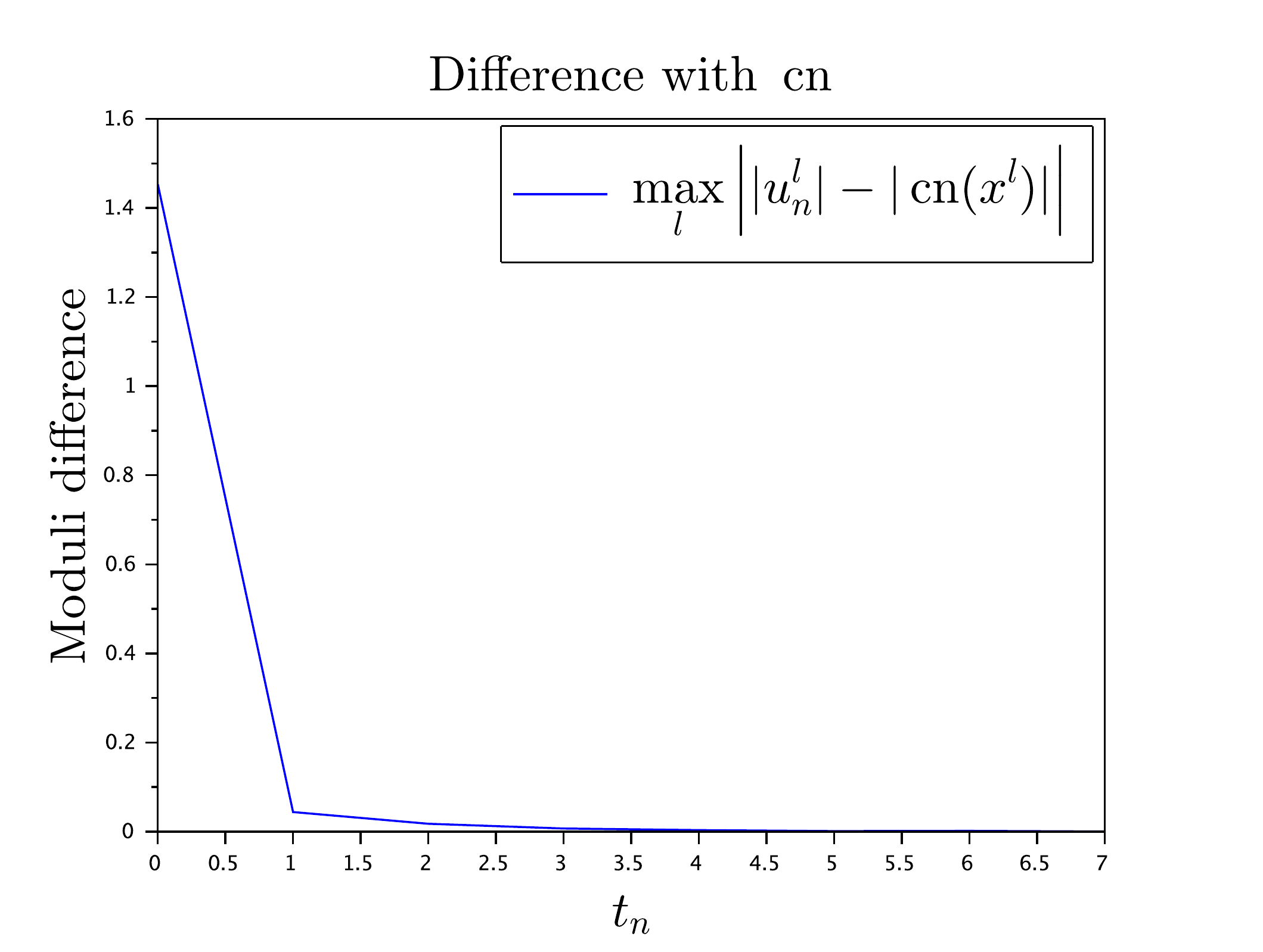}
  \hfill
  \includegraphics[width=0.49\textwidth]{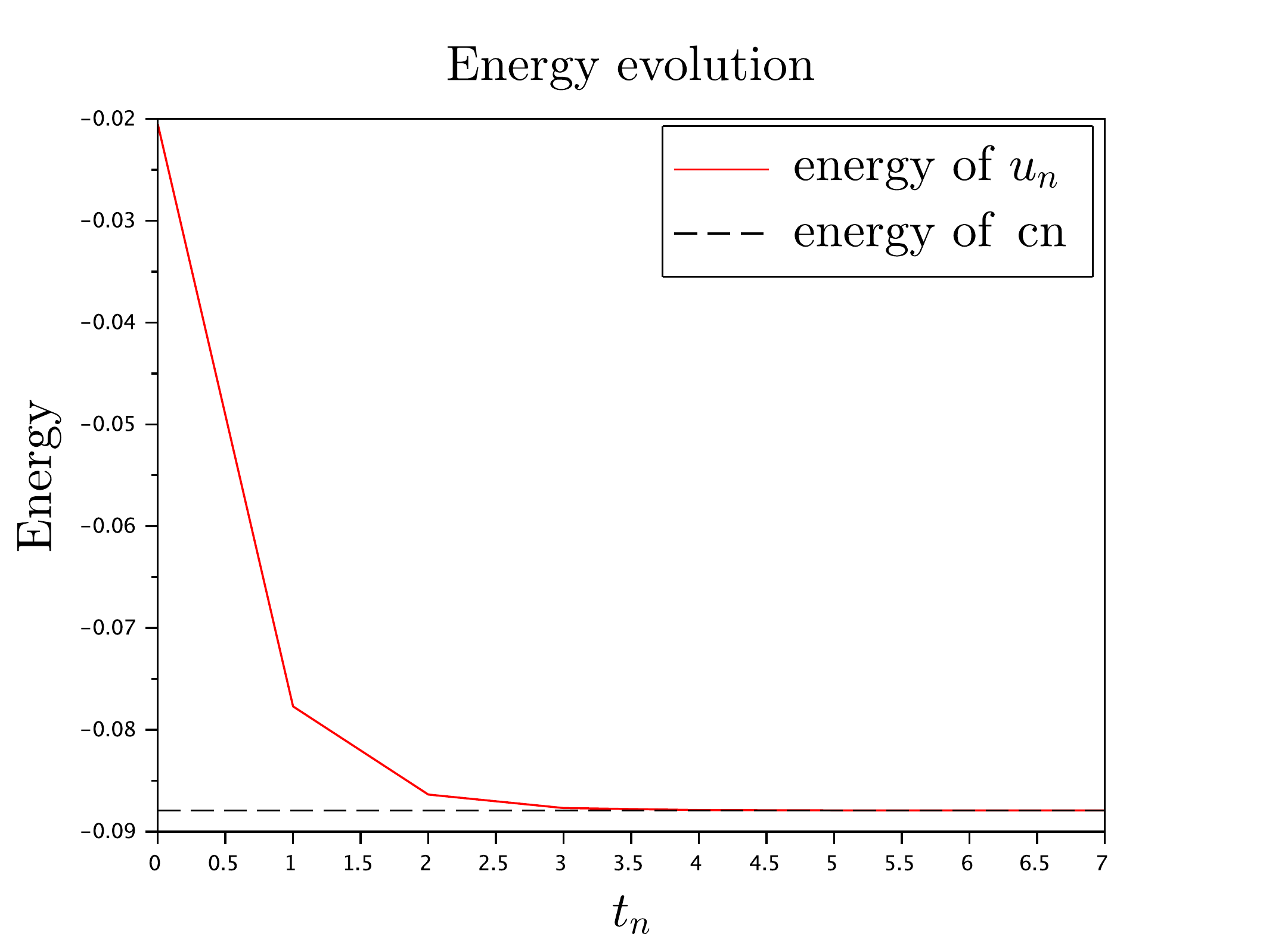}
  \caption{For $m=\mathcal M(\cn)=2(E-(1-k^2)K)/k^2$, focusing, anti-periodic case}
  \label{fig:focusing-A}
\end{figure}

\subsubsection{The Defocusing Case}

We finally turn out to the defocusing case, still imposing anti-periodicity. 
We have used
$b=-2k^2$ and $T=4K$. 

We have tested the algorithm without
the momentum renormalization step~\eqref{eq:333-3} and confirmed the
theoretical result Proposition~\ref{prop:defocusing-A}, which states
that a plane wave is the minimizer. We present the
result in Figure~\ref{fig:defocusing-A} for
initial data (c) of~\eqref{eq:initial-data} and mass constraint
$m=\mathcal M(\sn)=\frac{2(K-E)}{k^2}$. Note a plateau in the two graphs of
Figure~\ref{fig:defocusing-A}. This is due to the fact that the
sequence remains for some time close to $\sn$ (which is the expected
minimizer if we impose in addition the momentum constraint), before
eventually converging to the plane wave minimizer.

\begin{figure}[htpb!]
  \centering
  \includegraphics[width=0.49\textwidth]{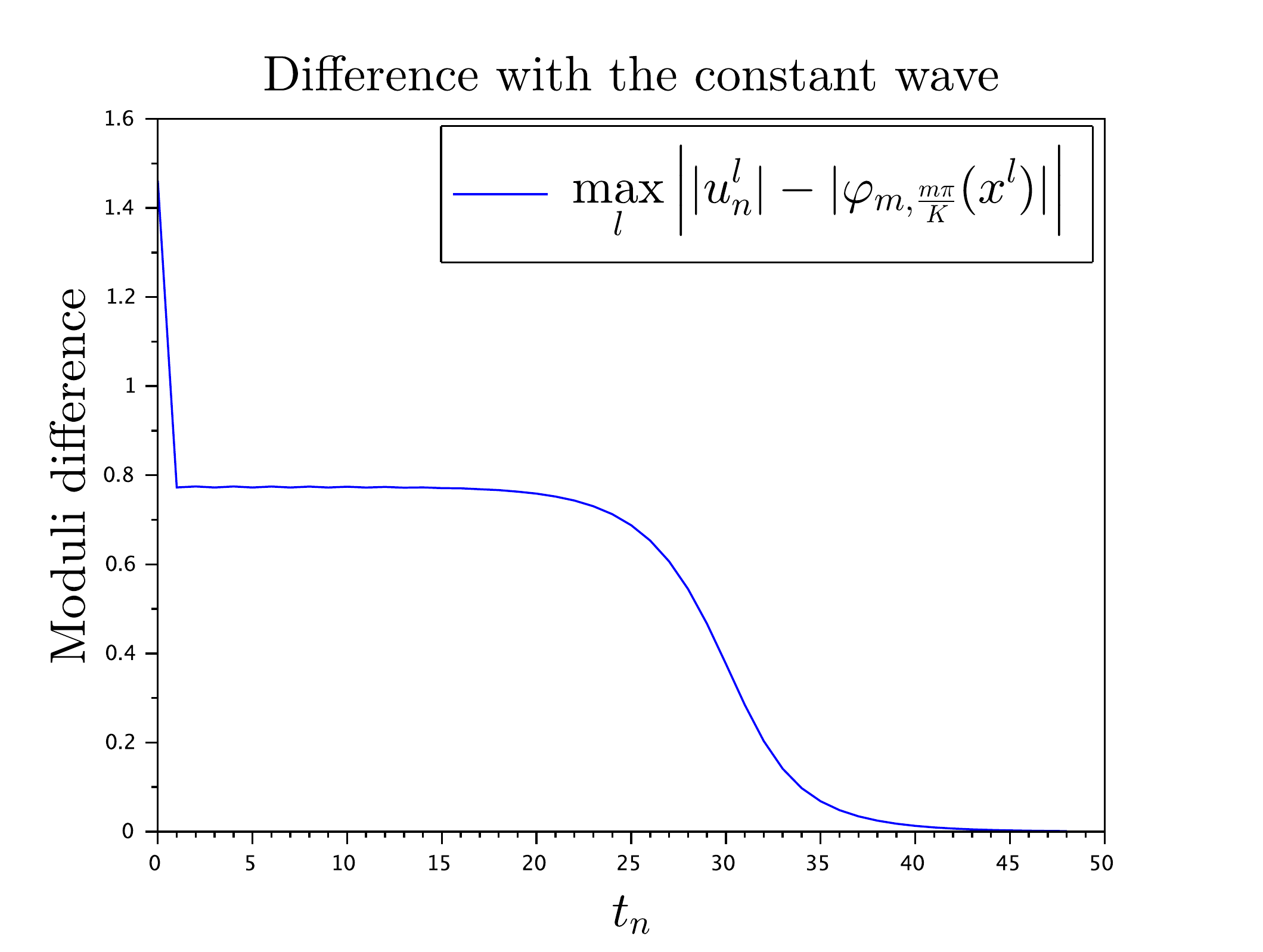}
  \hfill
  \includegraphics[width=0.49\textwidth]{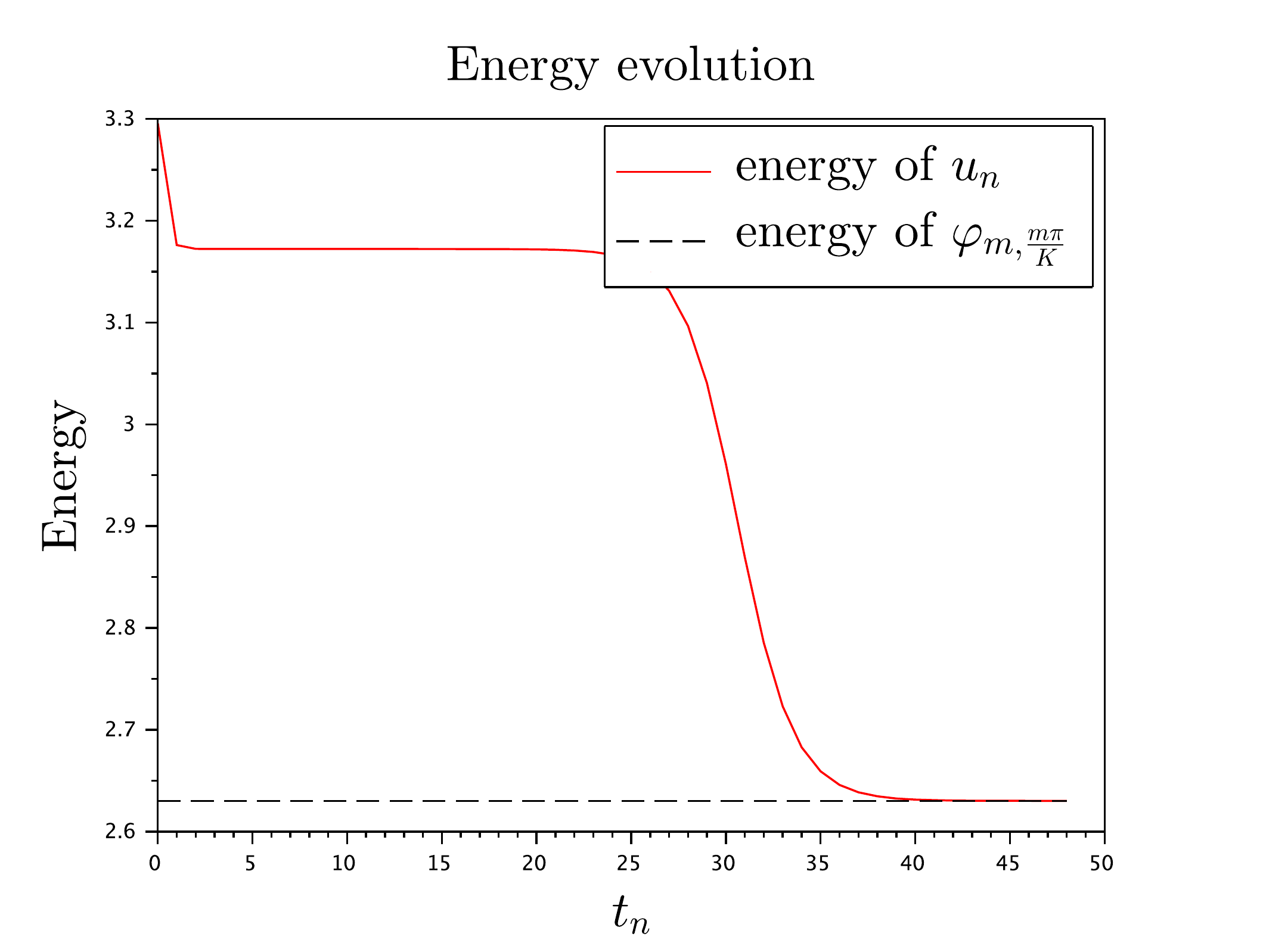}
  \caption{For $m=\mathcal M(\sn)=2(E(k)-K)/k^2$, defocusing,
    anti-periodic case without momentum constraint}
  \label{fig:defocusing-A}
\end{figure}

Finally, we run the full algorithm with mass and momentum
renormalization for  mass constraint
$m=\mathcal M(\sn)=\frac{2(K-E)}{k^2}$ and  $0$ momentum constraint. No
theoretical result is available in this case. We made the following
observation, which confirms Conjecture~\ref{conj:minimizer-2}.

\begin{observation}\label{obs:conjecture}
  The function $\sn$ is a minimizer for problem~\eqref{eq:min-prob-m-p-A}
  with $m=\mathcal M(\sn)$. 
\end{observation}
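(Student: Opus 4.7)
The plan is to follow the three-step template of Propositions \ref{prop:focusing-A} and \ref{prop:defocusing-A-}: (i) produce a minimizer by compactness, (ii) show that after a constant phase rotation this minimizer is real-valued, and (iii) combine Lemma \ref{th:2-2} with a Sturm--Liouville count to identify it as a (rescaled) $\sn$ and pin down its parameters.

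Step (i) is routine: since $b<0$, $\mathcal E\geq 0$ on the admissible class, and Gagliardo--Nirenberg together with the mass constraint bounds a minimizing sequence in $H^1_{\mathrm{loc}}\cap P_T$. A subsequence converges weakly in $H^1$ and strongly in $L^2\cap L^4$, so both constraints and the half-anti-periodicity pass to the limit, and weak lower semicontinuity yields a minimizer $u_\infty$. Granting step (ii), step (iii) is essentially a copy of the corresponding portion of Proposition \ref{prop:focusing-A}: Lemma \ref{th:2-2} gives $u_\infty(x)=\tfrac{1}{\alpha}\sn\bigl(K(k)+x/\beta,k\bigr)$ up to translation and sign; the Morse-index-at-most-one requirement for $L_+$ on $A_{T/2}$ combined with Sturm--Liouville theory forces the fundamental period to be $T$, i.e.\ $4K(k)\beta=T$; the strict monotonicity of $m\mapsto k$ already computed in Proposition \ref{prop:defocusing-A-} then identifies the unique admissible parameter triple.

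The real obstacle is step (ii). Because of the momentum constraint, the Euler--Lagrange equation carries an extra real Lagrange multiplier $c$,
\[
u_{xx}+b|u|^2u+au+icu_x=0,
\]
and writing $u_\infty=\rho e^{i\phi}$ produces the conservation law $\bigl(\rho^2(2\phi_x+c)\bigr)_x=0$. Integrating over one period and using $\mathcal P(u_\infty)=0$ and $\mathcal M(u_\infty)=m$ determines the constant, from which one sees that $u_\infty$ is real (mod phase) precisely when the momentum multiplier $c$ vanishes. The Fourier rearrangement of Lemma \ref{lem:stephen_trick} runs in the wrong direction here, since it enlarges $\lVert u\rVert_{L^4}^4$ and thus \emph{raises} $\mathcal E$ when $b<0$, so a genuinely new tool is required to force $c=0$.

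One natural avenue is to exploit the symmetry $u(x)\mapsto\overline{u(-x)}$, which a direct computation shows preserves mass, energy, half-anti-periodicity, \emph{and} momentum; its fixed points are exactly the functions with even real part and odd imaginary part, while the observation that all plane waves lying in $A_{T/2}$ carry frequencies $(2n+1)2\pi/T$ (and hence nonzero momentum) rules out the constant-modulus case. An alternative route bypasses identification altogether and aims at the bare inequality $\inf\eqref{eq:min-prob-m-p-A}\geq\mathcal E(\sn)$ via a comparison argument exploiting the orthogonal splitting $A_{T/2}=A_{T/2}^+\oplus A_{T/2}^-$, since the reverse inequality is free from $\sn\in A_{T/2}^-\subset\{\mathcal P=0\}$ together with Proposition \ref{prop:defocusing-A-}. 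In either approach the missing ingredient is a rearrangement or symmetrization inequality adapted to the defocusing, zero-momentum, half-anti-periodic setting; this, in my view, is the genuine obstruction and the reason the authors record the statement only as a numerically supported conjecture.
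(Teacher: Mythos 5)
You should first be aware that the paper contains no analytical proof of this statement: Observation~\ref{obs:conjecture} is recorded purely as the outcome of the numerical experiment of Section~\ref{sec:num-exp} (the gradient flow with mass and momentum renormalization, Figure~\ref{fig:defocusing-A-p}), and the identical assertion appears in Section~\ref{sec:variational} as Conjecture~\ref{conj:minimizer-2}, where the authors state explicitly that ``the main difficulty in proving the conjecture is to show that the minimizer is real up to a phase.'' So the relevant question is whether your argument closes the conjecture, and it does not, as you yourself acknowledge.

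Your diagnosis of where the difficulty sits is accurate and coincides with the authors' own. Steps (i) and (iii) follow the template of Propositions~\ref{prop:focusing-A} and~\ref{prop:defocusing-A-} and would go through once real-valuedness (mod constant phase) of a minimizer is known, and you are right that the Fourier rearrangement of Lemma~\ref{lem:stephen_trick} increases $\norm{u}_{L^4}^4$ and hence \emph{increases} $\mathcal E$ when $b<0$, so it cannot be used to realify a minimizing sequence here. But step (ii) as written is not a proof. The symmetry $u\mapsto\overline{u(-x)}$ preserves the constraints and the energy, so it maps minimizers to minimizers; it does not follow that any given minimizer is a fixed point of it --- for that you would need uniqueness of the minimizer or a strict inequality for non-symmetric competitors, which is exactly the missing ingredient. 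Moreover, even a fixed point (even real part, odd imaginary part) need not be real up to a constant phase. Likewise, the identity $\bigl(\rho^2(2\phi_x+c)\bigr)_x=0$ combined with $\mathcal P(u_\infty)=0$ does not force $c=0$: when $\rho$ never vanishes, the Euler--Lagrange equation admits a genuinely complex two-parameter family of bounded periodic solutions (see \cite{Ga94,GaHa07-1}, cited in Section~\ref{sec:preliminaries}), and ruling these out as constrained minimizers is precisely the open point. Your proposal therefore correctly locates the obstruction but does not resolve it; the statement remains, as in the paper, a numerically supported conjecture rather than a theorem.
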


 We present in Figure~\ref{fig:defocusing-A-p} the result of the
 experiment with full algorithm for
initial data (c) of~\eqref{eq:initial-data} and mass constraint
$m=\mathcal M(\sn)=\frac{2(K-E)}{k^2}$. 

\begin{figure}[htpb!]
  \centering
  \includegraphics[width=0.49\textwidth]{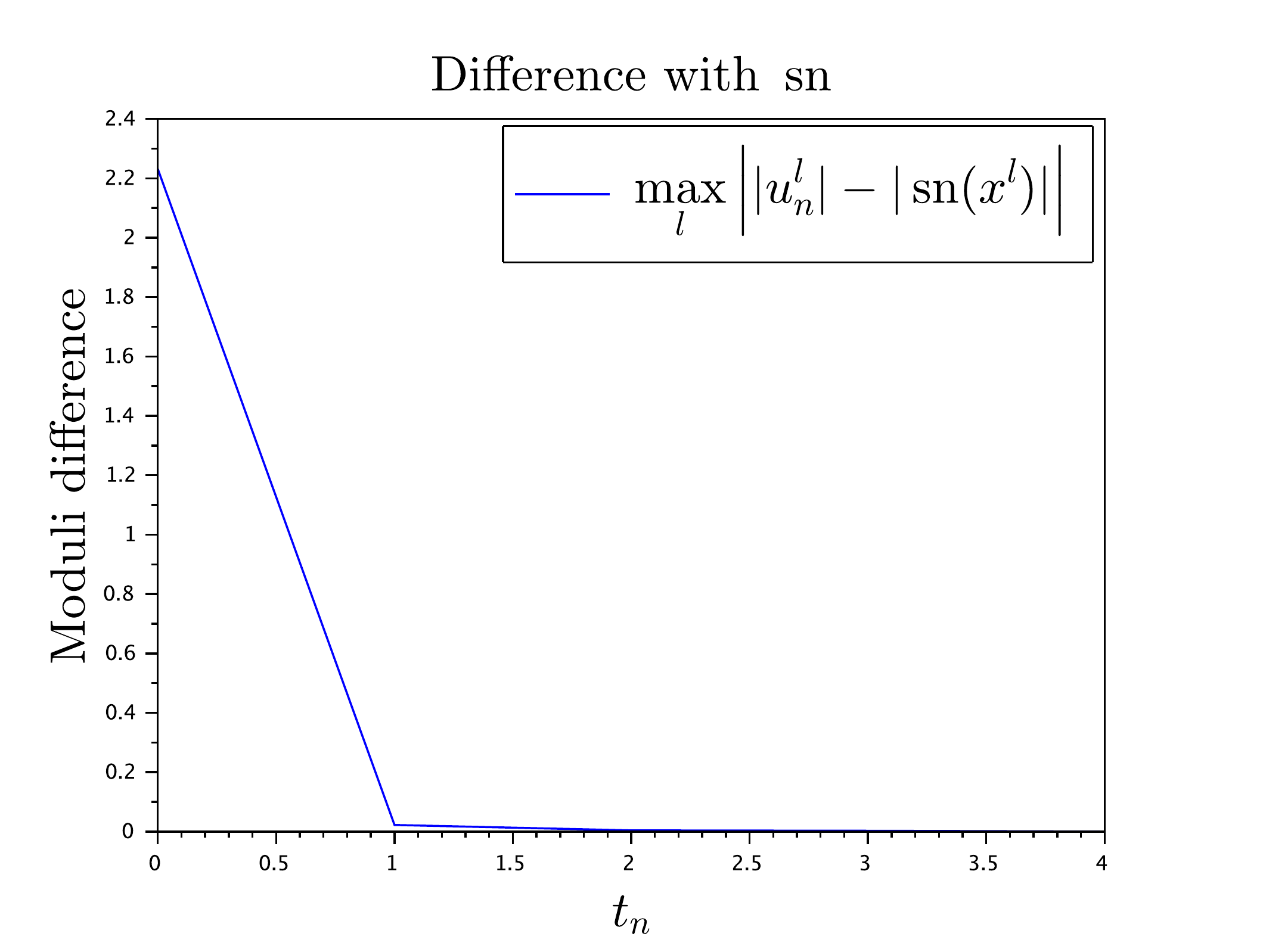}
  \hfill
  \includegraphics[width=0.49\textwidth]{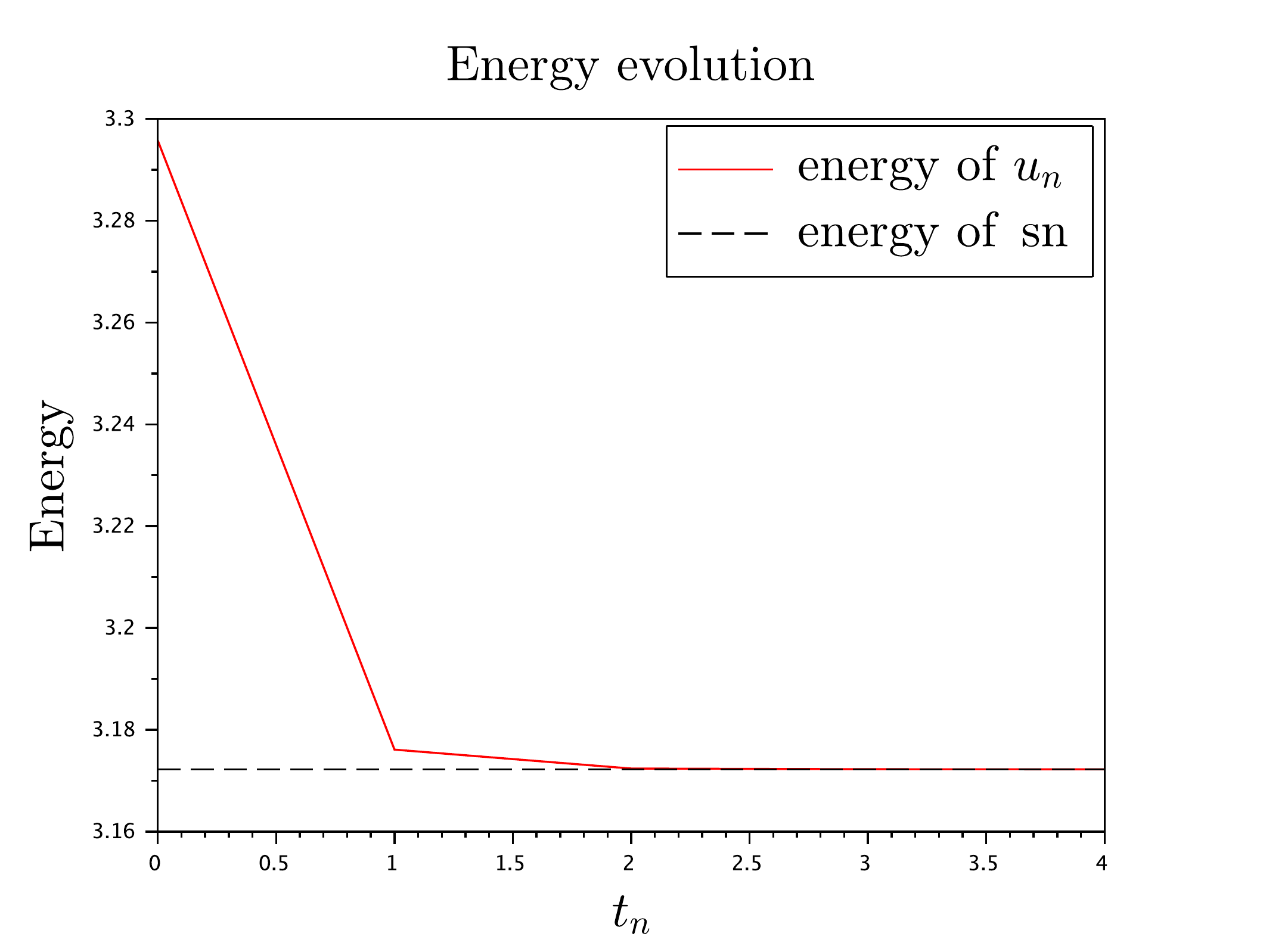}
  \caption{For $m=\mathcal M(\sn)=2(E(k)-K)/k^2$, defocusing,
    anti-periodic case with momentum constraint}
  \label{fig:defocusing-A-p}
\end{figure}

 \bibliographystyle{abbrv}
 \bibliography{gustafson-lecoz-tsai}
 
\end{document}